\def\X{\mathcal{X}}
\def\B{\mathscr{B}}
\def\C{\mathfrak{C}}
\def\E{\mathbb{E}}
\def\s{\mathfrak{s}}
\def\op{^\mathrm{op}}
\def\Ab{\mathit{Ab}}
\def\del{\delta}
\def\dr{\ar@{->}[r]}
\def\Y{\mathscr{Y}}
\def\X{\mathscr{X}}
\def\Y{\mathscr{Y}}
\def\Hom{\mbox{Hom}}
\newtheorem{theorem}{Theorem}[section]
\newtheorem{lemma}[theorem]{Lemma}
\newtheorem{corollary}[theorem]{Corollary}
\newtheorem{proposition}[theorem]{Proposition}
\theoremstyle{definition}
\newtheorem{definition}[theorem]{Definition}
\newtheorem{remark*}[]{Remark}
\newtheorem{example}[theorem]{Example}
\newtheorem{example*}[]{Example}
\newtheorem{construction*}[]{Construction}
\newtheorem{assumption*}[]{Assumption}
\begin{document}

\baselineskip=15pt
\title{\LARGE{\bf Tilting pairs in extriangulated categories}
\footnotetext{ 2010 MSC:  18E10; 18E30.}
\footnotetext{Key words: Extriangulated category; Tilting pair; Bazzoni characterization; Auslander-Reiten correspondence.}
\footnotetext{$^\ast$ Corresponding author.}
%\footnotetext{This work was supported by the NSF of China (Grant Nos. 11671221, 11971225, 11901341), the project ZR2019QA015 supported by Shandong Provincial Natural Science Foundation,
%and the Young Talents Invitation Program of Shandong Province. }
}
\medskip
\author{\normalsize{Tiwei Zhao$^\ast$, Bin Zhu, Xiao Zhuang}}

\date{}

\maketitle
\def\blue{\color{blue}}
\def\red{\color{red}}

\begin{abstract}
Extriangulated categories were introduced by Nakaoka and Palu to give a unification of properties in exact categories and extension-closed subcategories of triangulated categories. A notion of tilting pairs in an extriangulated category is introduced in this paper.  We give a Bazzoni  characterization  of tilting  pairs in this setting. We also obtain Auslander-Reiten correspondence   of tilting  pairs which  classifies finite $\mathcal{C}$-tilting subcategories for a certain self-orthogonal subcategory $\mathcal{C}$ with some assumptions. This generalizes  the known results given by Wei and  Xi for the categories of finitely generated modules over Artin algebras, thereby providing new insights in exact and triangulated categories.
\end{abstract}

\section{Introduction}

Exact categories and triangulated categories are two fundamental structures in algebra, geometry and topology.
 As expected, exact categories and triangulated categories are not
independent of each other.  In \cite{np}, Nakaoka and Palu introduced the notion of externally triangulated categories
(extriangulated categories for short) as a simultaneous generalization of exact categories and extension-closed subcategories of triangulated
categories (they may no longer be triangulated
categories in general). After that, the study of extriangulated categories has become an active topic, and   up to now,  many results on exact categories and triangulated
categories have gotten realization in the setting of extriangulated categories by many authors, e.g. see \cite{CZZ,inp,ln,np,NP20,PPPP,zh,zz} and other references.

Tilting modules or tilting functors (i.e. the functors induced by tilting modules as Hom-functors) as a
generalization of Bernstein-Gelfand-Ponomarev reflection functors \cite{bgp} were introduced by Auslander,
Platzeck and Reiten \cite{apr}, Brenner and Butler \cite{bb}, Happel and Ringel \cite{hr}. The classical tilting theory plays a crucial role in the representation theory
of algebras and related topics. When  studying tilting modules over Artin algebras, Miyashita introduced the notion of tilting
pairs in \cite{m2}. This notion is a generalization of tilting modules, and  it turn out to be useful for constructing tilting modules associated with a series of idempotent ideals in terms of tilting pairs. Among the interesting results available in tilting theory, there is a beautiful characterization
of tilting modules by  Colpi and Trlifaj \cite{ct} for the one dimensional case and
by Bazzoni in the general case \cite{ba}, which states that a module $T$ is  $n$-tilting if and only if its right orthogonal subcategory can be presented by a kind of exact sequences which are constructed from ${\rm add}T$. Another important result is the well-known Auslander-Reiten correspondence. It is originally
given by Auslander and Reiten \cite{ar}, which states that there is a one-one correspondence between  tilting (resp.
cotilting) modules and certain covariantly (resp. contravariantly) finite subcategories over the  category of finitely generated modules
of an Artin algebra. In \cite{wx,wx2}, Wei and Xi extended Bazzoni characterization and Auslander-Reiten correspondence of tilting modules to the setting of
Miyashita's tilting pairs. It also has several generalizaions in different settings, for example, Wei considered  semi-tilting complexes in the derived category of complexes over a ring $R$ \cite{w}, and Di et al considered silting subcategories in triangulated categories \cite{dlww}.

 In this paper, inspired by the philosophy of Wei and Xi in the categories of finitely generated modules of Artin algebras,  we expect to introduce
  the notion of tilting pairs in extriangulated categories, and give a framework on the Bazzoni characterization and Auslander-Reiten correspondence of tilting  pairs in this setting, which provides new insights in exact and triangulated categories.

 The paper is organized as follows. In Section 2, we summarize some basic definitions and propositions about extriangulated categories. In Section 3,
 we mainly study several kinds of subcategories relative to a self-orthogonal subcategory $\omega$. The first one is the subcategory $\widehat{\omega}$ such that each
 object in it admits a finite $\omega$-resolution. The second one is the subcategory ${_\omega\X}$  such that each
 object in it admits a  proper $\omega$-resolution. The third one is the right orthogonal subcategory $\omega^{\perp}$ of $\omega$.
 We also consider subcategories $\check{\omega}$, ${\X_\omega}$ and ${^\perp\omega}$ dually. When $\omega$ is self-orthogonal, these subcategories possess
 nice homological properties, such as the closure of extensions, direct summands,  cones of inflations or cocones of deflations (see Lemma \ref{ort} and Proposition \ref{cone-co}). As a key result, we show that $\widetilde{\omega}:=\check{\hat{\omega}}$ (resp. $\check{{_\omega\X}}$)
can be obtained by taking cones or cocones from $\widehat{\omega}$ (resp. ${_\omega\X}$) to $\check{\omega}$ (Proposition \ref{equ}).
In Section 4, we introduce the notion of tilting pairs in extriangulated categories. It unifies tilting pairs in the categories of finitely generated modules of Artin algebras \cite{m2}, silting complexes in the bounded homotopy category of finite generated projective $R$-modules over an associative ring \cite{AI}, and meanwhile $n$-tilting subcategories in extriangulated categories \cite{ZZhuang}. We give a Bazzoni characterization for $n$-tilting pairs in this setting (see Theorem \ref{6}).
In Section 5, we mainly study the Auslander-Reiten correspondence for tilting pairs, which classifies finite $\mathcal{C}$-tilting subcategories for a certain self-orthogonal subcategory $\mathcal{C}$ with some assumptions. More precisely,
 let $\mathcal{C}$ be self-orthogonal such that ${_\mathcal{C}\X}$ has a relative injective cogenerator and
$\check{{_\mathcal{C}\X}}$ is finite filtered. Then there are bijective correspondences as follows:
$$
\xymatrix@C=0.8cm@R=1cm{
   \left\{\mbox{finite  }\mathcal{C}\mbox{-tilting subcategories}\right\} \ar@{<=>}[r]^-{{\rm Thm.} \ref{4.12}}\ar@{<=>}[rd]_-{{\rm Thm.} \ref{fal}}& {\left\{ \begin{array}{c}
                                                                                             \mbox{subcategories }\mathcal{D}\mbox{ which are relative} \\
                                                                                             \mbox{coresolving, strongly covariantly}\\
                                                                                             \mbox{finite in }{_\mathcal{C}\X},\mbox{ and satisfies }\check{\mathcal{D}}=\check{{_\mathcal{C}\X}}
                                                                                           \end{array}\right\}}\ar@{<=>}[d]^-{{\rm Prop.} \ref{4.15}}\\
                                                                                          & {\left\{ \begin{array}{c}
                                                                                             \mbox{subcategories }\mathcal{D}\mbox{ which are relative} \\
                                                                                             \mbox{resolving, strongly contravariantly}\\ \mbox{finite in }{_\mathcal{C}\X},\mbox{ and are contained in }\hat{\mathcal{C}}
                                                                                           \end{array}\right\}.}
   }
 $$

\section{Preliminaries}

%Throughout the article, $\C$ denotes an additive category. All subcategories considered are full additive subcategories closed under isomorphisms.
%We denote by $\C(A,B)$ or $\Hom_{\C}(A,B)$ the set of morphisms from $A$ to $ B$ in $\C$.

We first recall some definitions and some basic properties of extriangulated categories from \cite{np}.

Let $\C$ be an additive category. Suppose that $\C$ is equipped with a biadditive functor $\E\colon\C\op\times\C\to\Ab$, where $\Ab$ is the category of abelian groups. For any pair of objects $A,C\in\C$, an element $\delta\in\E(C,A)$ is called an $\E$-extension. Zero element $0\in\E(C,A)$ is called the split $\E$-extension.
Since $\E$ is a bifunctor, for any $a\in\C(A,A')$ and $c\in\C(C',C)$, we have $\E$-extensions
$$ \E(C,a)(\del)\in\E(C,A')\ \ \text{and}\ \ \ \E(c,A)(\del)\in\E(C',A). $$
We abbreviate them to $a_\ast\del$ and $c^\ast\del$ respectively.
%For any $A,C\in\C$, the zero element $0\in\E(C,A)$ is called the spilt $\E$-extension.
For any $\delta\in \E(C,A), \delta'\in \E(C',A'),$ since $\C$ and $\E$ are additive, we can define the $\E$-extension
$$\delta\oplus\delta'\in\E(C\oplus C',A\oplus A').$$

\begin{definition}{\rm (\cite[Definition 2.3]{np})}\label{mo}
A \emph{morphism} $(a,c):\delta\to\delta'$ of $\E$-extensions $\delta\in \E(C,A), \delta'\in \E(C',A')$ is a pair of morphisms $a\in\C(A,A')$ and $c\in\C(C,C')$ in $\C$ satisfying $ a_\ast\del=c^\ast\del'. $
\end{definition}

Let $A,C\in\C$ be any pair of objects. Sequences of morphisms in $\C$
$$\xymatrix@C=0.7cm{A\ar[r]^{x} & B \ar[r]^{y} & C}\ \ \text{and}\ \ \ \xymatrix@C=0.7cm{A\ar[r]^{x'} & B' \ar[r]^{y'} & C}$$
are said to be \emph{equivalent} if there exists an isomorphism $b\in\C(B,B')$ which makes the following diagram commutative.
$$\xymatrix@C=20pt@R=20pt{
A \ar[r]^x \ar@{=}[d] & B\ar[r]^y \ar[d]_{\simeq}^{b} & C\ar@{=}[d]&\\
A\ar[r]^{x'} & B' \ar[r]^{y'} & C &}$$

We denote the equivalence class of $\xymatrix@C=0.7cm{A\ar[r]^{x} & B \ar[r]^{y} & C}$ by $[\xymatrix@C=0.7cm{A\ar[r]^{x} & B \ar[r]^{y} & C}]$.

\begin{definition}{\rm (\cite[Definition 2.9]{np})}\label{re}
For any $\E$-extension $\delta\in\E(C,A)$, one can associate an equivalence class $\s(\delta)=[\xymatrix@C=0.7cm{A\ar[r]^{x} & B \ar[r]^{y} & C}].$  This $\s$ is called a \emph{realization} of $\E$, if it satisfies the following condition:
\begin{itemize}
\item Let $\del\in\E(C,A)$ and $\del'\in\E(C',A')$ be any pair of $\E$-extensions with $$\s(\del)=[\xymatrix@C=0.7cm{A\ar[r]^{x} & B \ar[r]^{y} & C}],\ \ \ \s(\del')=[\xymatrix@C=0.7cm{A'\ar[r]^{x'} & B'\ar[r]^{y'} & C'}].$$
Then, for any morphism $(a,c)\colon\del\to\del'$, there exists $b\in\C(B,B')$ which makes the following diagram commutative:
\begin{equation}\label{reali}
\begin{split}
\xymatrix@C=20pt@R=20pt{
A \ar[r]^x \ar[d]^a & B\ar[r]^y \ar[d]^{b} & C\ar[d]^c&\\
A'\ar[r]^{x'} & B' \ar[r]^{y'} & C'. &}
\end{split}
\end{equation}
\end{itemize}
In this case, we say that the sequence $\xymatrix@C=0.7cm{A\ar[r]^{x} & B \ar[r]^{y} & C}$ \emph{realizes} \ $\del$, whenever it satisfies $\s(\del)=[\xymatrix@C=0.7cm{A\ar[r]^{x} & B \ar[r]^{y} & C}]$.
In the above situation, we say that the triple $(a,b,c)$ \emph{realizes} $(a,c)$.
\end{definition}

\begin{definition}{\rm (\cite[Definition 2.10]{np})}\label{rea}
Let $\C, \E$ be as above. A realization $\s$ of $\E$ is said to be \emph{additive} if the following conditions are satisfied:
\begin{itemize}
\item[{\rm (i)}] For any $A,C\in\C$, the split $\E$-extension $0\in \E(C,A)$ satisfies $\s(0)=0$.
\item[{\rm (ii)}] For any pair of $\E$-extensions $\del\in\E(A,C)$ and $\del'\in\E(A',C')$, we have
                  $$\s(\del\oplus \del')=\s(\del)\oplus \s(\del').$$.
\end{itemize}
\end{definition}

\begin{definition}{\rm (\cite[Definition 2.12]{np})}\label{ext}   %{\red \Large I have deleted those expressions of \rm (ET3)$\op$ and \rm (ET4)$\op$ }
We call the triple $(\C,\E,\s)$ an \emph{extriangulated category} if the following conditions are satisfied:
\begin{itemize}
\item[{\rm (ET1)}] $\E\colon\C\op\times\C\to\Ab$ is a biadditive functor.
\item[{\rm (ET2)}] $\s$ is an additive realization of $\E$.
\item[{\rm (ET3)}] Let $\del\in\E(C,A)$ and $\del'\in\E(C',A')$ be any pair of $\E$-extensions, realized as
$$ \s(\del)=[\xymatrix@C=0.7cm{A\ar[r]^{x} & B \ar[r]^{y} & C}],\ \ \s(\del')=[\xymatrix@C=0.7cm{A'\ar[r]^{x'} & B' \ar[r]^{y'} & C'}]. $$
For any commutative square
$$\xymatrix@C=20pt@R=20pt{
A \ar[r]^x \ar[d]^a & B\ar[r]^y \ar[d]^{b} & C&\\
A'\ar[r]^{x'} & B' \ar[r]^{y'} & C' &}$$
in $\C$, there exists a morphism $(a,c)\colon\del\to\del'$ which is realized by $(a,b,c)$.
\item[{\rm (ET3)$\op$}] Dual of \rm (ET3).
\item[{\rm (ET4)}] Let $\delta\in\E(A,D)$ and $\del'\in\E(B,F)$ be $\E$-extensions realized as
$$\s(\del)=[\xymatrix@C=0.7cm{A\ar[r]^{f} & B \ar[r]^{f'} & D}]\ \ \text{and}\ \ \ \s(\del')=[\xymatrix@C=0.7cm{B\ar[r]^{g} & C \ar[r]^{g'} & F}]$$
respectively. Then there exist an object $E\in\C$, a commutative diagram
$$\xymatrix@C=20pt@R=20pt{A\ar[r]^{f}\ar@{=}[d]&B\ar[r]^{f'}\ar[d]^{g}&D\ar[d]^{d}\\
A\ar[r]^{h}&C\ar[d]^{g'}\ar[r]^{h'}&E\ar[d]^{e}\\
&F\ar@{=}[r]&F}$$
in $\C$, and an $\E$-extension $\del^{''}\in\E(E,A)$ realized by $\xymatrix@C=0.7cm{A\ar[r]^{h} & C \ar[r]^{h'} & E},$ which satisfy the following compatibilities:
\begin{itemize}
\item[{\rm (i)}] $\xymatrix@C=0.7cm{D\ar[r]^{d} & E \ar[r]^{e} & F}$  realizes $f'_{\ast}\del'$,
\item[{\rm (ii)}] $d^\ast\del''=\del$,

\item[{\rm (iii)}] $f_{\ast}\del''=e^{\ast}\del'$.
\end{itemize}

\item[{\rm (ET4)$\op$}]  Dual of \rm (ET4).
\end{itemize}
\end{definition}

For an extriangulated category $\C$, we use the following notations.

\begin{itemize}
\item[(i)] A sequence $\xymatrix@C=0.41cm{A\ar[r]^{x} & B \ar[r]^{y} & C}$ is called a \emph{conflation} if it realizes some $\E$-extension $\del\in\E(C,A)$.
In which case, $\xymatrix@C=0.41cm{A\ar[r]^{x} & B}$ is called an \emph{inflation} and $\xymatrix@C=0.41cm{B \ar[r]^{y} & C}$ is called a \emph{deflation}. We call $\xymatrix@C=0.41cm{A\ar[r]^{x} & B \ar[r]^{y} & C\ar@{-->}[r]^{\del}&}$ an \emph{$\E$-triangle}.

\item[(ii)] Given an $\E$-triangle $\xymatrix@C=0.5cm{A\ar[r]^{x} & B \ar[r]^{y} & C\ar@{-->}[r]^{\del}&},$ we call $A$ the \emph{cocone} of $y\colon B\to C,$ and denote it by ${\rm Cocone}(y);$ we call $C$ the \emph{cone} of $x\colon A\to B,$ and denote it by ${\rm Cone}(x).$
\item[(iii)] A subcategory $\mathcal{T}$ of $\C$ is called \emph{extension-closed} if for any $\E$-triangle $\xymatrix@C=0.5cm{A\ar[r]^{x} & B \ar[r]^{y} & C\ar@{-->}[r]^{\del}&} $ with $A, C\in \mathcal{T}$, we have $B\in \mathcal{T}$.
\end{itemize}

\begin{lemma}\label{ile}{\rm (\cite[Corollary 3.15]{np})}
Let $(\C,\E,\s)$ be an extriangulated category. Then the following results hold.

\item{(1)}  Let $C$ be any object in $\C$, and let
$$\xymatrix@C=0.5cm{A_{1}\ar[r]^{x_{1}}&B_{1}\ar[r]^{y_{1}}&C\ar@{-->}[r]^{\delta_{1}}&},
\xymatrix@C=0.5cm{A_{2}\ar[r]^{x_{2}}&B_{2}\ar[r]^{y_{2}}&C\ar@{-->}[r]^{\delta_{2}}&}$$
be any pair of $\E$-triangles. Then there is a commutative diagram in $\C$
$$\xymatrix@C=20pt@R=20pt{&A_{2}\ar@{=}[r]\ar[d]^{m_{2}}&A_{2}\ar[d]^{x_{2}}\\
A_{1}\ar[r]^{m_{1}}\ar@{=}[d]&M\ar[r]^{e_{1}}\ar[d]^{e_{2}}&B_{2}\ar[d]^{y_{2}}&\\
A_{1}\ar[r]^{x_{1}}&B_{1}\ar[r]^{y_{1}}&C&\\
&&}$$ which satisfies
$$\s(y_{2}^{*}\delta_{1})=[\xymatrix@C=0.41cm{A_{1}\ar[r]^{m_{1}} & M \ar[r]^{e_{1}} & B_{2}}],$$
$$\s(y_{1}^{*}\delta_{2})=[\xymatrix@C=0.41cm{A_{2}\ar[r]^{m_{2}}&M\ar[r]^{e_{2}}&B_{1}}],$$
$$m_{1*}\delta_{1}+m_{2*}\delta_{2}=0.$$
\item{(2)}  Dual of {(1)}.
\end{lemma}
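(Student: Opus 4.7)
The statement is a pullback $3\times 3$ square for two $\E$-triangles sharing a terminal object $C$, playing the role of the homotopy pullback in the triangulated setting. My approach is to build $M$ as the realization of the pulled-back extension $y_{2}^{*}\delta_{1}$, then use the realization axiom and (ET4)$\op$ to fill in the remaining data.

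\textbf{Steps 1--2 (the object $M$ and the morphism $e_{2}$).} Realize the pulled-back $\E$-extension $y_{2}^{*}\delta_{1}\in\E(B_{2},A_{1})$ as an $\E$-triangle
\[
A_{1}\xrightarrow{m_{1}}M\xrightarrow{e_{1}}B_{2}\dashrightarrow y_{2}^{*}\delta_{1}.
\]
This defines $M$, $m_{1}$, $e_{1}$ and establishes the first of the two displayed realizations. Since $(\id_{A_{1}},y_{2})$ is a morphism of $\E$-extensions $y_{2}^{*}\delta_{1}\to\delta_{1}$ by the very definition of the pullback extension, Definition \ref{re} supplies $e_{2}:M\to B_{1}$ with $e_{2}m_{1}=x_{1}$ and $y_{1}e_{2}=y_{2}e_{1}$. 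At this stage every commutativity in the target diagram is in place except for the second column $A_{2}\to M\to B_{1}$.

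\textbf{Step 3 (the second $\E$-triangle and the Baer-sum identity).} The deflations $e_{1}:M\to B_{2}$ and $y_{2}:B_{2}\to C$ are composable, with composite $y_{1}e_{2}=y_{2}e_{1}:M\to C$. Applying axiom (ET4)$\op$ to the $\E$-triangles realizing $y_{2}^{*}\delta_{1}$ and $\delta_{2}$ produces a commutative diagram containing a new object $K$ and two fresh $\E$-triangles $A_{1}\to K\to A_{2}$ and $K\to M\to C$, together with compatibility identities of the form $d^{*}\delta''=\delta$ and $f_{*}\delta''=e^{*}\delta'$. The commutativity $y_{1}e_{2}=y_{2}e_{1}$ from Step 2 yields a comparison $K\to B_{1}$ that is seen to be an isomorphism by a diagram chase (both $K$ and $B_{1}$ represent the same ``pullback'' of $y_{1}$ and $y_{2}$). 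Under this identification, the first fresh triangle becomes $A_{2}\xrightarrow{m_{2}}M\xrightarrow{e_{2}}B_{1}$ realizing $y_{1}^{*}\delta_{2}$, and the (ET4)$\op$ compatibility identities translate directly into the Baer-sum relation $m_{1*}\delta_{1}+m_{2*}\delta_{2}=0$. Part (2) follows by applying (1) to the opposite extriangulated category $(\C\op,\E\op,\s\op)$.

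\textbf{Main obstacle.} The technical heart of the argument is the identification $K\cong B_{1}$ in Step 3. Because Definition \ref{re} supplies $e_{2}$ only as \emph{some} morphism (not a canonical one), one must verify that $e_{2}$ can be chosen consistently with the (ET4)$\op$-output, or equivalently that the comparison $K\to B_{1}$ is an isomorphism. Once this is in place, commutativity of the full $3\times 3$ diagram and the Baer-sum identity fall out of the compatibility equalities built into (ET4)$\op$.
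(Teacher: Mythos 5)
The paper gives no proof of this lemma at all: it is imported verbatim from Nakaoka--Palu as \cite[Corollary 3.15]{np}, so your argument can only be judged on its own terms. Your Steps 1--2 are correct and are the standard opening: realizing $y_2^{*}\delta_1$ by $A_1\xrightarrow{m_1}M\xrightarrow{e_1}B_2$ and applying the realization axiom to the morphism $(\mathrm{id}_{A_1},y_2)\colon y_2^{*}\delta_1\to\delta_1$ does produce $e_2\colon M\to B_1$ with $e_2m_1=x_1$ and $y_1e_2=y_2e_1$.

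Step 3 contains a genuine error. Applying (ET4)$\op$ to the composable deflations $M\xrightarrow{e_1}B_2\xrightarrow{y_2}C$ yields an object $K$ sitting in conflations $K\to M\xrightarrow{y_2e_1}C$ and $A_1\to K\to A_2$; thus $K$ is the cocone of the composite $y_2e_1$ and is an extension of $A_2$ by $A_1$. It is \emph{not} isomorphic to $B_1$, which is an extension of $C$ by $A_1$: already for short exact sequences of abelian groups, $M$ is the pullback $B_1\times_C B_2$ and $K=\ker(M\to C)\cong A_1\oplus A_2$, which differs from $B_1$ whenever $\delta_1$ is non-split (take both $\delta_i$ to be $0\to\mathbb{Z}\xrightarrow{2}\mathbb{Z}\to\mathbb{Z}/2\to 0$). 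Moreover, even if some identification were available, neither of the two conflations produced by (ET4)$\op$ has the shape $A_2\to M\to B_1$, so the middle column of the diagram --- and with it the realization $\s(y_1^{*}\delta_2)=[A_2\xrightarrow{m_2}M\xrightarrow{e_2}B_1]$ and the identity $m_{1*}\delta_1+m_{2*}\delta_2=0$ --- does not fall out of the (ET4)$\op$ compatibilities. These are precisely the nontrivial parts of the statement, and they require a different ingredient: for instance one first establishes the ``Mayer--Vietoris'' conflation $M\to B_1\oplus B_2\xrightarrow{(y_1\ y_2)}C$ (essentially the dual of Lemma \ref{ilem}, i.e.\ of \cite[Corollary 3.16]{np}, applied to $\delta_1$ and $y_2$) and then extracts the column $A_2\to M\xrightarrow{e_2}B_1$ and the Baer-sum relation from it. As written, the proposal cannot be completed, because the isomorphism $K\cong B_1$ on which it hinges is false.
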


The following lemma was proved in \cite[Proposition 1.20]{ln}, see also \cite[Corollary 3.16]{np}.
\begin{lemma}\label{ilem}
Let $\xymatrix@C=0.5cm{A\ar[r]^{x}&B\ar[r]^{y}&C\ar@{-->}[r]^{\delta}&}$ be an $\E$-triangle,  $f\colon A\to D$  any morphism, and
$\xymatrix@C=0.5cm{D\ar[r]^{d}&E\ar[r]^{e}&C\ar@{-->}[r]^{f_{*}\delta}&}$  any $\E$-triangle realizing $f_{*}\delta.$ Then there is a morphism $g$
which gives a morphism of $\E$-triangles
$$\xymatrix@C=20pt@R=20pt{A\ar[r]^{x}\ar[d]^f&B\ar[r]^y\ar[d]^g&C\ar@{-->}[r]^\del\ar@{=}[d]&\\
D\ar[r]^{d}&E\ar[r]^{e}&C\ar@{-->}[r]^{f_{*}\del}&}
$$
and moreover, $\xymatrix{A\ar[r]^{\binom{-f}{x}\quad}&D\oplus B\ar[r]^{\quad(d,\ g)}&E\ar@{-->}[r]^{e^{*}\delta}&}$ becomes an $\E$-triangle.
\end{lemma}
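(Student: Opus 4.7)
The plan is to obtain the map $g$ and the $\E$-triangle on $D\oplus B$ simultaneously as outputs of Lemma \ref{ile}(1). First, I would apply Lemma \ref{ile}(1) to the two given $\E$-triangles with extensions $\delta$ and $f_{\ast}\delta$, both ending at the common object $C$. This yields an object $M$ together with $\E$-triangles
\[
\xymatrix@C=0.6cm{A\ar[r]^{m_1}&M\ar[r]^{e_1}&E\ar@{-->}[r]^{e^{\ast}\delta}&}\qquad \text{and}\qquad \xymatrix@C=0.6cm{D\ar[r]^{m_2}&M\ar[r]^{e_2}&B\ar@{-->}[r]^{y^{\ast}(f_{\ast}\delta)}&},
\]
satisfying the commutativity conditions $e_2 m_1 = x$, $e_1 m_2 = d$, $e\circ e_1 = y\circ e_2$, and the sum identity $m_{1\ast}\delta + m_{2\ast}(f_{\ast}\delta) = 0$.

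Next, I would use the standard fact that $y^{\ast}\delta=0$, which follows from the long exact sequence associated to the $\E$-triangle with extension $\delta$. By bifunctoriality of $\E$, $y^{\ast}(f_{\ast}\delta)=f_{\ast}(y^{\ast}\delta)=0$, so the middle column is a split $\E$-triangle. Fix an isomorphism $M\cong D\oplus B$ sending $m_2$ to $\binom{1}{0}$ and $e_2$ to $(0,1)$. Under this splitting, commutativity forces $m_1=\binom{\alpha}{x}$ and $e_1=(d,\gamma)$ for some $\alpha\colon A\to D$ and $\gamma\colon B\to E$. The sum identity, combined with $x_{\ast}\delta=0$, reduces to $(\alpha+f)_{\ast}\delta=0$, so the standard factorization property for $\E$-triangles produces a morphism $h\colon B\to D$ with $\alpha+f=hx$.

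Finally, I would replace the splitting by the unipotent automorphism $\phi=\left(\begin{smallmatrix}1 & -h\\ 0 & 1\end{smallmatrix}\right)$ of $D\oplus B$. It preserves $m_2$ and $e_2$, sends $m_1$ to $\binom{-f}{x}$, and sends $e_1$ to $(d,\,g)$ where $g:=dh+\gamma$. Two direct checks complete the proof: from $e_1 m_1 = 0$ one gets $\gamma x = -d\alpha$, whence $gx = dhx + \gamma x = d(hx - \alpha) = df$; from $e\circ e_1 = y\circ e_2$ one gets $e\gamma = y$, whence $eg = edh + e\gamma = e\gamma = y$ (using $ed=0$). So $g$ renders the required square commutative, and the transported $\E$-triangle is
\[
\xymatrix@C=0.6cm{A\ar[r]^{\binom{-f}{x}\ }&D\oplus B\ar[r]^{\ (d,\,g)}&E\ar@{-->}[r]^{e^{\ast}\delta}&},
\]
exactly as claimed.

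I expect the main obstacle to be the third paragraph's alignment step: Lemma \ref{ile}(1) supplies its $\E$-triangle only up to equivalence, and matching the implicit maps $m_1, e_1$ against the prescribed forms $\binom{-f}{x}$ and $(d,g)$ requires exploiting the splitting ambiguity through the factorization property for morphisms annihilating $\delta$; this is the key homological input that produces the required twist $h$.
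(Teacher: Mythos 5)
Your proof is correct, but note that the paper itself does not prove this lemma: it is imported verbatim from \cite[Proposition 1.20]{ln} (see also \cite[Corollary 3.16]{np}), so there is no internal proof to compare against. Your reconstruction is sound and follows essentially the route taken in those references: applying Lemma \ref{ile}(1) to the two conflations over $C$, observing that the middle column realizes $y^{\ast}(f_{\ast}\delta)=f_{\ast}(y^{\ast}\delta)=0$ and hence splits as $M\cong D\oplus B$, and then using the vanishing $(\alpha+f)_{\ast}\delta=0$ (extracted from the identity $m_{1\ast}\delta+m_{2\ast}(f_{\ast}\delta)=0$ via the biadditive decomposition of $\E(C,D\oplus B)$ together with $x_{\ast}\delta=0$) to produce the factorization $\alpha+f=hx$ and the unipotent change of splitting that normalizes $m_1$ to $\binom{-f}{x}$. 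All the ingredients you invoke ($y^{\ast}\delta=0$, $x_{\ast}\delta=0$, the factorization of maps killed by $\delta_{\#}$ through $x$, and vanishing of consecutive compositions in a conflation) are standard consequences of the long exact sequences in Lemma \ref{long} and hold in any extriangulated category, and your two final verifications $gx=df$ and $eg=y$ are computed correctly. The only remark worth making is that the first assertion (existence of $g$) follows in one line from the realization axiom (ET2), since $(f,\mathrm{id}_C)\colon\delta\to f_{\ast}\delta$ is automatically a morphism of $\E$-extensions; your argument instead constructs $g$ explicitly as $dh+\gamma$, which costs a little more work but has the advantage of yielding the conflation on $D\oplus B$ simultaneously.
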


\begin{definition}{\rm (\cite[Definition 3.23]{np})}\label{pro}
Let $\C, \E$ be as above. An object $P\in \C$ is called \emph{projective} if it satisfies the following condition.
\begin{itemize}
\item For any $\E$-triangle $\xymatrix@C=0.41cm{A\ar[r]^{x} & B \ar[r]^{y}&C\ar@{-->}[r]^{\delta}&}$ and any morphism $c\in \C(P,C)$, there exists $b\in \C(P,B)$ satisfying $y\circ b=c$.
\end{itemize}
The notion of \emph{injective} objects are defined dually.

We denote the subcategory consisting of projective (injective, resp.) objects in $\C$ by ${\rm Proj}(\C)$ (${\rm Inj}(\C)$, resp.).
\end{definition}

\begin{definition}\label{proje}  {\rm (\cite[Definition 3.25]{np})}
Let $(\C,\E,\s)$ be an extriangulated category. We say that it has \emph{enough projectives} (\emph{enough injectives}, resp.) if it satisfies the following condition:
\begin{itemize}
\item For any object $C\in \C$ ($A\in \C$, resp.), there exists an $\E$-triangle $$\xymatrix@C=0.5cm{A\ar[r]^{x}&P\ar[r]^{y}&C\ar@{-->}[r]^{\delta}&} (\xymatrix@C=0.5cm{A\ar[r]^{x}&I\ar[r]^{y}&C\ar@{-->}[r]^{\delta}&}, \mbox{resp.})$$ satisfying $P\in {\rm Proj}(\C)$ ($I\in {\rm Inj}(\C)$, resp.).

In this case, $A$ is called the \emph{syzygy} of $C$ ($C$ is called the \emph{cosyzygy} of $A$, resp.) and is denoted by $\Omega(C)$ $(\Sigma(A),$ resp.).
\end{itemize}
\end{definition}

Suppose $\C$ is an extriangulated category with enough projectives and injectives. For a subcategory $\B\subseteq\C$, put $\Omega^{0}\B=\B$, and for $i>0$ we define $\Omega^{i}\B$ inductively to be the subcategory consisting of syzygies of objects in $\Omega^{i-1}\B$, i.e.
$$\Omega^{i}\B=\Omega(\Omega^{i-1}\B).$$
We call $\Omega^{i}\B$ the $i$-th syzygy of $\B$. Dually we define the $i$-th cosyzygy $\Sigma^{i}\B$ by $\Sigma^{0}\B=\B$ and $\Sigma^{i}\B=\Sigma(\Sigma^{i-1}\B)$ for $i>0$.

In \cite[Proposition 5.2]{ln} the authors defined higher extension groups in an extriangulated category having enough projectives and injectives as $\E^{i+1}(X,Y)\cong\E(X,\Sigma^{i}Y)\cong \E(\Omega^{i}X,Y)$ for $i\geq0,$ and they showed the following result.

\begin{lemma}\label{long} {\rm (\cite[Proposition 5.2]{ln})}
Let $\xymatrix@C=0.5cm{A\ar[r]^{x}&B\ar[r]^{y}&C\ar@{-->}[r]^{\delta}&}$ be an $\E$-triangle. For any object $X\in \B$, there are long exact sequences
$$\cdots\rightarrow\E^{i}(X, A)\xrightarrow{x_{*}}\E^{i}(X, B)\xrightarrow{y_{*}}\E^{i}(X, C)\rightarrow\E^{i+1}(X, A)\xrightarrow{x_{*}}\E^{i+1}(X, B)\xrightarrow{y_{*}}\cdots (i\geq 0),$$
$$\cdots\rightarrow\E^{i}(C, X)\xrightarrow{y^{*}}\E^{i}(B, X)\xrightarrow{x^{*}}\E^{i}(A, X)\rightarrow\E^{i+1}(C, X)\xrightarrow{y^{*}}\E^{i+1}(B, X)\xrightarrow{x^{*}}\cdots (i\geq 0).$$
\end{lemma}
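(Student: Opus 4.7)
The plan is to extend the basic six-term exact sequence associated to an $\E$-triangle in an extriangulated category to a full long exact sequence by iterated dimension shifting. The base case $i=0$ amounts to the segment $\Hom(X,A)\to\Hom(X,B)\to\Hom(X,C)\to\E(X,A)\to\E(X,B)\to\E(X,C)$, whose exactness is a direct consequence of axioms (ET1)--(ET4) and is already part of the foundational theory in \cite{np}; the contravariant version $\Hom(C,X)\to\Hom(B,X)\to\Hom(A,X)\to\E(C,X)\to\E(B,X)\to\E(A,X)$ holds dually.

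To continue past the $\E$-level, I would exploit the definitional identification $\E^{i+1}(X,Y)\cong\E(X,\Sigma^i Y)$ and construct a shifted $\E$-triangle. Concretely, pick an $\E$-triangle $\xymatrix@C=0.3cm{A\ar[r]&I_A\ar[r]&\Sigma A\ar@{-->}[r]&}$ with $I_A$ injective, which exists by the enough-injectives hypothesis. By Lemma \ref{ilem}, the pushout of $x\colon A\to B$ along $A\to I_A$ produces an $\E$-triangle $\xymatrix@C=0.3cm{I_A\ar[r]&M\ar[r]&C\ar@{-->}[r]&}$. Then applying (ET4) to the pair of $\E$-triangles $A\to I_A\to\Sigma A$ and $I_A\to M\to C$ yields a new $\E$-triangle $\xymatrix@C=0.3cm{\Sigma A\ar[r]&M'\ar[r]&C\ar@{-->}[r]&}$ whose associated extension class realizes the would-be connecting map. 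Applying $\Hom(X,-)$ to this new $\E$-triangle produces another six-term sequence. Because $I_A$ is injective, the intermediate terms involving $I_A$ vanish in positive degrees, so the two six-term sequences splice cleanly through the isomorphism $\E(X,\Sigma A)\cong\E^2(X,A)$ to give the continuation $\E(X,C)\to\E^2(X,A)\to\E^2(X,B)\to\E^2(X,C)$.

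Iterating this shifting argument and inducting on $i$ delivers the first long exact sequence in its full length. The second (contravariant) long exact sequence is handled in parallel using projective syzygies $\Omega C\to P_C\to C$ in place of injective cosyzygies, or equivalently by transferring the argument to the opposite extriangulated category $(\C\op,\E\op,\s\op)$. The main technical obstacle is to verify that the connecting homomorphism produced by this pushout-and-(ET4) recipe is well-defined (independent of the chosen injective envelope, up to the natural identification $\E^{i+1}(X,A)\cong\E(X,\Sigma^iA)$) and that exactness genuinely holds at each splicing junction. This boils down to a careful diagram chase using Lemma \ref{ile} to compare the extension classes obtained from different choices and realizations, together with the injectivity vanishing $\E^j(X,I)=0$ for $j\geq 1$, which is what the definition of higher $\E^i$ via iterated cosyzygies is built to guarantee.
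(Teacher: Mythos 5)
First, note that the paper itself offers no argument for this lemma: it is imported verbatim from \cite[Proposition 5.2]{ln}, so the only ``proof'' in the paper is the citation. Measured against the actual proof of the cited result, your overall plan is the right one: start from the six-term exact sequence attached to an $\E$-triangle in \cite{np}, and extend it by dimension shifting through injective cosyzygies (resp.\ projective syzygies), using $\E^{j}(X,I)=0$ for $j\geq 1$ to splice consecutive six-term sequences along the identifications $\E^{i+1}(X,A)\cong\E(X,\Sigma^{i}A)$.

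However, the concrete construction you propose for the shifted triangle fails. Since $I_A$ is injective we have $\E(C,I_A)=0$, so the pushout class $\iota_{*}\delta$ along $\iota\colon A\to I_A$ vanishes and your $\E$-triangle $I_A\to M\to C$ is split; the subsequent (ET4) application then produces $\Sigma A\to M'\to C$ realizing $p_{*}(\iota_{*}\delta)=0$, a split conflation whose extension class is zero and therefore cannot encode the connecting homomorphism. It is also oriented the wrong way: applying $\C(X,-)$ to a conflation of the form $\Sigma A\to M'\to C$ yields a connecting map $\C(X,C)\to\E(X,\Sigma A)$, which is one degree off from the desired $\E^{i}(X,C)\to\E^{i}(X,\Sigma A)\cong\E^{i+1}(X,A)$. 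The correct rotation comes from the dual of Lemma \ref{ile}, applied to the two inflations $x\colon A\to B$ and $\iota\colon A\to I_A$ out of $A$: one obtains a commutative diagram in which the row $I_A\to M\to C$ splits, so $M\cong I_A\oplus C$, and the column $B\to I_A\oplus C\to\Sigma A$ is an $\E$-triangle. Applying $\C(X,-)$ to this conflation, deleting the terms $\E^{j}(X,I_A)=0$, and checking via the same commutative diagram that the induced maps agree with $y_{*}$ and $x_{*}$ gives exactly the splice $\E^{i}(X,B)\to\E^{i}(X,C)\to\E^{i+1}(X,A)\to\E^{i+1}(X,B)$; iterating this rotation, and running the dual argument with projective syzygies for the contravariant sequence, completes the proof. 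With that one step repaired, your outline is sound.
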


An \emph{$\E$-triangle sequence} in $\C$ is defined as a sequence
\begin{equation}\label{E}
  \cdots\rightarrow W_{n+1}\xrightarrow{d_{n+1}}W_{n}\xrightarrow{d_{n}}W_{n-1}\rightarrow\cdots
\end{equation}
over $\C$ such that for any $n$, there are $\E$-triangles $\xymatrix{K_{n+1}\ar[r]^{g_{n}}&W_{n}\ar[r]^{f_{n}}&K_{n}\ar@{-->}[r]^{\delta^{n}}&}$ and the differential $d_{n}=g_{n-1}f_{n}.$

\section{Basic results}
\setcounter{equation}{0}

Throughout this paper, we always assume that $\C$ is a Krull-Schmidt extriangulated category having enough projectives and injectives, and  a subcategory of $\C$ means a full and additive subcategory which is closed under isomorphisms and direct summands.

For any $T\in\C,$ we denote by $\mbox{add}(T)$ the category of objects isomorphic to direct summands of finite direct sums of $T.$

For a subcategory $\omega\subseteq\C,$ we define
\begin{align*}
   \omega^{\bot_{1}}=&\{Y\in\C\ | \ \E(X,Y)=0, \forall X\in\omega \},\\
  \omega^{\bot}=&\{Y\in\C \ | \ \E^{i}(X,Y)=0 \ , \forall i\geq1, \ \forall X\in\omega\}.
\end{align*}
Similarly, we define
\begin{align*}
{^{\bot_{1}}\omega}=&\{Y\ \in\C\ |\ \E(Y,X)=0, \forall X\in\omega \},\\
{^{\bot}\omega}=&\{Y\in\C\ |\ \E^{i}(Y,X)\ =0, \forall i\geq1, \ \forall X\in\omega\}.
\end{align*}

Let $\mathcal{X}$ and $\mathcal{Y}$ be two subcategories of $\C$.  If $\mathcal{Y}\subseteq \mathcal{X}^{\bot}$, or equivalently, $\mathcal{X}\subseteq {^{\bot}\mathcal{Y}}$, then we use the symbol $\mathcal{X}{\bot}\mathcal{Y}$.

Define
\begin{align*}
  \hat{{\omega}}_{n}=&\left\{A\in\C\mid\mbox{there is an $\E$-triangle sequence } W_{n}\xrightarrow{d_{n}}\cdots  \rightarrow W_{1}\xrightarrow{d_{1}}W_{0}\xrightarrow{d_{0}}A\right.\\
 &\ \ \ \ \ \ \ \ \ \ \ \left. \mbox{ with }W_{i}\in\omega \right\},\\
  \check{\omega}_{n}=&\left\{A\in\C\mid\mbox{there is an $\E$-triangle sequence }A\xrightarrow{d^{0}} W^{0}\xrightarrow{d^{1}}W^{1}\rightarrow\cdots \xrightarrow{d^{n}}W^{n}\right.\\
 &\ \ \ \ \ \ \ \ \ \ \ \left.\mbox{ with }W^{i}\in\omega\right\}.
\end{align*}

We denote by $\hat{\omega}$( $\check{\omega}, \mbox{resp.})$ the union of all $\hat{\omega}_{n}$ $(\check{\omega}_{n}, \mbox{resp.})$ for integers $n\geq 0$.  That is to say
$$\hat{\omega}=\bigcup_{n=0}^{\infty}\hat{\omega}_{n},\ \check{\omega}=\bigcup_{n=0}^{\infty}\check{\omega}_{n}.$$

We also denote by $\widetilde{\omega}=\check{\hat{\omega}}$.

%\begin{definition}
A subcategory $\omega$ is said to be \emph{self-orthogonal} provided that $\omega{\bot}\omega$.
%\end{definition}
%
A subcategory $\omega$ is said to be \emph{finite} if  $\omega=\mbox{add}(T)$ for some $T\in\C.$

Given a self-orthogonal subcategory $\omega.$ We define
\begin{align*}
  {_\omega\X}=&\left\{A\in\C\mid\mbox{there is an $\E$-triangle sequence }\cdots\rightarrow W_{2}\xrightarrow{d_{2}} W_{1}\xrightarrow{d_{1}}W_{0}\xrightarrow{d_{0}}A\right.\\
  &\left.\mbox{ as in (\ref{E}) with each } W_{i}\in\omega\mbox{ and each } K_i\in\omega^{\bot} \right\},\\
  {\X_\omega}=&\left\{A\in\C\mid\mbox{there is an $\E$-triangle sequence }A\xrightarrow{d^{0}} W^{0}\xrightarrow{d^{1}}W^{1}\xrightarrow{d^{2}} W^{2}\rightarrow\cdots\right.\\
  &\left.\mbox{ as in (\ref{E}) with each }W^{i}\in\omega\mbox{ and each } K^i\in{^{\bot}\omega} \right\}.
\end{align*}

It is obvious that $\omega\subseteq {_\omega\X}$ ($\omega\subseteq \X_{\omega}$) and ${_\omega\X}$ ($\X_{\omega}$, resp.) is the largest subcategory of $\C$ such that $\omega$ is projective (injective, resp.) and a generator (cogenerator, resp.) in it.
Moreover, we have
 $$\hat{\omega}\subseteq {_\omega\X}\subseteq\omega^{\bot}, \ \check{\omega}\subseteq\X_{\omega}\subseteq\sideset{^{\bot}}{}{\mathop{\omega}}.$$

We first collect some basic properties for these subcategories.

\begin{lemma}\label{ort}
Suppose that $\omega$ is a self-orthogonal subcategory of $\C.$ Then
\begin{itemize}
  \item[$(1)$] $\check{\omega}\bot \omega^{\bot}$. %$\E^{i}(X_{1},X_{2})=0$ for any $X_{1}\in\check{\omega}$, $X_{2}\in\omega^{\bot}$ and any $i\geq1$.
  \item[$(1')$] ${^{\bot}\omega}\bot \hat{\omega}$.   % $\E^{i}(X_{1},X_{2})=0$ for any $X_{1}\in{^{\bot}\omega}$, $X_{2}\in\hat{\omega}$ and any $i\geq1$.
  \item [$(2)$] $\check{\omega}_n$ ($\hat{\omega}_n$, resp.) is closed under extensions and direct summands.
  \item [$(3)$] ${}_\omega\X$ is closed under extensions, direct summands and cones of inflations.
  \item [$(3')$] $\X_\omega$ is closed under extensions, direct summands and cocones of deflations.
  \item [$(4)$] Given an $\E$-triangle $\xymatrix@C=0.5cm{X\ar[r]&Y\ar[r]&Z\ar@{-->}[r]&}$, if $Y,Z\in{_\omega\X}$ ($\hat{\omega}_n$, resp.)
  and $X\in\omega^{\bot_1}$, then $X\in{}_\omega\X$ ($\hat{\omega}_n$, resp.).
  \item [$(4')$]Given an $\E$-triangle $\xymatrix@C=0.5cm{X\ar[r]&Y\ar[r]&Z\ar@{-->}[r]&}$, if $X,Y\in{\X_\omega}$ ($\check{\omega}_n$, resp.)
  and $Z\in{^{\bot_1}\omega}$, then $Z\in{}\X_\omega$ ($\check{\omega}_n$, resp.).
\end{itemize}
\end{lemma}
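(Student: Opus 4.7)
The plan is to dispatch all eight claims using three recurring tools: dimension shifting via Lemma~\ref{long}, the $3\times 3$ lemma (Lemma~\ref{ile}) together with Lemma~\ref{ilem}, and the octahedral axioms (ET4), (ET4)$^{op}$, everywhere leveraging the self-orthogonality $\omega\bot\omega$.

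For (1) and (1'), the method is dimension shifting. Given $A\in\check{\omega}_n$ with defining $\E$-triangles $K^i\to W^i\to K^{i+1}\dashrightarrow$ (with $K^0=A$ and $K^n=W^n$), for any $B\in\omega^\perp$ the long exact sequences yield $\E^j(K^i,B)\cong\E^{j+1}(K^{i-1},B)$ for $j\geq 1$, since $\E^{\geq 1}(W^i,B)=0$. A finite descent down to $K^n\in\omega$ gives $\E^{\geq 1}(A,B)=0$. Part (1') is dual. For (2), I would induct on $n$. For extension closure in $\check{\omega}_n$, given $X\to Y\to Z\dashrightarrow$ with $X,Z\in\check{\omega}_n$, I pick first coresolution triangles $X\to W_X\to X_1\dashrightarrow$ and $Z\to W_Z\to Z_1\dashrightarrow$ and run the horseshoe construction via Lemma~\ref{ilem} and (ET4), producing $Y\to W_X\oplus W_Z\to Y_1\dashrightarrow$ together with an $\E$-triangle $X_1\to Y_1\to Z_1\dashrightarrow$ of syzygies; the inductive hypothesis places $Y_1\in\check{\omega}_{n-1}$. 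Direct summand closure holds at $n=0$ since $\omega$ is a subcategory, and descends inductively using Krull--Schmidt together with the horseshoe. The $\hat{\omega}_n$ cases are dual.

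For (3) and (3'), I would first establish extension closure by iterating the horseshoe at each syzygy depth of the infinite resolutions. Cones of inflations in (3) then follow by composition: given $X\to Y\to Z\dashrightarrow$ with $X,Y\in{}_\omega\X$, applying (ET4)$^{op}$ to the composition $W_0^Y\to Y\to Z$ yields a deflation $W_0^Y\to Z$ with cocone $E$ fitting into $K_1^Y\to E\to X\dashrightarrow$; extension closure forces $E\in{}_\omega\X$, and iteration gives $Z\in{}_\omega\X$. For direct summand closure of ${}_\omega\X$, given $Y=X\oplus X'\in{}_\omega\X$, I would consider $F=\mathrm{Cocone}(W_0^Y\to X)$ and $E=\mathrm{Cocone}(W_0^Y\to X')$ obtained from the projections: a Mayer--Vietoris $\E$-triangle $K_1^Y\to F\oplus E\to W_0^Y\dashrightarrow$ (built via Lemmas~\ref{ile}, \ref{ilem}) splits since $\E(W_0^Y,K_1^Y)=0$, giving $F\oplus E\cong K_1^Y\oplus W_0^Y$. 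Iterating at each syzygy depth of $Y$ produces an actual $\omega$-resolution of $X$ whose terms are direct sums of the $W_i^Y$'s and whose syzygies lie in $\omega^\perp$, so $X\in{}_\omega\X$.

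The crux is (4) (with (4') dual). Given $X\to Y\to Z\dashrightarrow$ with $Y,Z\in{}_\omega\X$ and $X\in\omega^{\perp_1}$, Lemma~\ref{long} upgrades $X\in\omega^{\perp_1}$ to $X\in\omega^\perp$ (using $\E^{\geq 1}(W,Y)=\E^{\geq 1}(W,Z)=0$). I would then apply Lemma~\ref{ile}(1) to $X\to Y\to Z\dashrightarrow$ and $K_1^Z\to W_0^Z\to Z\dashrightarrow$ to obtain $Y'$ with $\E$-triangles $K_1^Z\to Y'\to Y\dashrightarrow$ (so $Y'\in{}_\omega\X$ by extension closure) and $X\to Y'\to W_0^Z\dashrightarrow$. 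Taking $K_1^{Y'}\to W_0^{Y'}\to Y'\dashrightarrow$ and applying (ET4)$^{op}$ to the composition $W_0^{Y'}\to Y'\to W_0^Z$ yields $P$ with $\E$-triangles $P\to W_0^{Y'}\to W_0^Z\dashrightarrow$ and $K_1^{Y'}\to P\to X\dashrightarrow$. The decisive observation is that $P\in\omega^\perp$ (extension of two such) while $W_0^Z\in\omega$, so $\E(W_0^Z,P)=0$ forces the first $\E$-triangle to split, whence $P$ is a direct summand of $W_0^{Y'}\in\omega$ and thus $P\in\omega$. Then $K_1^{Y'}\to P\to X\dashrightarrow$ with $K_1^{Y'}\in{}_\omega\X$ is the first stage of an $\omega$-resolution of $X$, which splices with the resolution of $K_1^{Y'}$ to conclude $X\in{}_\omega\X$. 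The main obstacle I anticipate is orchestrating Lemma~\ref{ile} and (ET4)$^{op}$ in the correct orientation so that self-orthogonality produces the crucial splittings $W_0^{Y'}\cong P\oplus W_0^Z$ and $F\oplus E\cong K_1^Y\oplus W_0^Y$, and then running the Mayer--Vietoris iteration faithfully to build the full resolution in a subcategory defined by infinite resolutions.
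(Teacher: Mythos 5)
Most of what you propose is sound, and in several places you take a genuinely different (and more self-contained) route than the paper, which delegates parts (1) and (3) to \cite{ZZhuang}. Your dimension shift for (1), (1') is the standard one. Your proof of (4) is correct but longer than the paper's: the paper pulls back $W_0\to Z$ along $Y\to Z$ to get an $\E$-triangle $X\to W\to W_0$ with $W\in{_\omega\X}$ (an extension of $Y$ by $K_1$), notes that $\E(W_0,X)=0$ splits it, and then invokes closure of ${_\omega\X}$ under direct summands; you instead split the triangle $P\to W_0^{Y'}\to W_0^{Z}$, which only uses summand-closure of $\omega$ itself (a standing convention), at the cost of one extra application of ${\rm (ET4)}^{\rm op}$. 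Your summand-swap for direct summands of ${_\omega\X}$ in (3) is the right mechanism, although the ``Mayer--Vietoris'' triangle $K_1^Y\to F\oplus E\to W_0^Y$ is asserted rather than constructed; it does follow from Lemma \ref{ile} combined with the splittings furnished by $K_1^Y,E,F\in\omega^{\bot}$, but that construction is where the actual work sits.

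The one genuine gap is the direct-summand closure of $\check{\omega}_n$ and $\hat{\omega}_n$ in (2). ``Descends inductively using Krull--Schmidt together with the horseshoe'' is not an argument: the horseshoe handles extensions, not summands, and Krull--Schmidt does not let you split a length-$n$ coresolution of $X\oplus X'$ into coresolutions of $X$ and $X'$, since the cosyzygies of a summand need not be summands of the cosyzygies. The paper closes this by proving the characterization
$$\check{\omega}_n=\{X\in\X_{\omega}\mid \E^{n+1}(Y,X)=0\ \mbox{for all }Y\in{^{\bot}\omega}\},$$
from which summand-closure is immediate; this identity is moreover reused verbatim in the proof of Proposition \ref{5}, so skipping it costs you later. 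If you insist on avoiding it, you would have to run the same summand-swap you use for ${_\omega\X}$ and add a termination argument: after $n$ swap steps the $n$-th cosyzygy $E_n$ of $X$ satisfies $E_n\oplus F_n\cong(\mbox{a finite direct sum of the }W^i)\in\omega$, whence $E_n\in\omega$ because $\omega$ is closed under summands. As written, the step has no mechanism behind it.
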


\begin{proof}
(1) follows from \cite[Lemma 3.4]{ZZhuang}.

(2) We use induction on $n$. In case $n=0$, let $\xymatrix@C=0.5cm{W_0\ar[r]&W\ar[r]&W_1\ar@{-->}[r]&}$ be an $\E$-triangle with $W_0,W_1\in\omega$. Since
$\omega$ is self-orthogonal, it is split, and hence $W\cong W_0\oplus W_1\in\omega$. Assume the result holds for $n$.
Let $\xymatrix@C=0.5cm{X\ar[r]&Y\ar[r]&Z\ar@{-->}[r]&}$ be an $\E$-triangle with $X,Z\in\check{\omega}_{n+1}$. Then there are two $\E$-triangles
 $\xymatrix@C=0.5cm{X\ar[r]^{f_1}&W_0 \ar[r]^{g_1}&V_0\ar@{-->}[r]&}$ and $\xymatrix@C=0.5cm{Z\ar[r]^{f_2}&W_1\ar[r]^{g_2}&V_1\ar@{-->}[r]&}$ with $W_0,W_1\in\omega$ and $V_0,V_1\in\check{\omega}_{n}$.
 Consider the following commutative diagram
 \[\xymatrix@C=20pt@R=20pt{X\ar[r]\ar[d]&Y\ar[r]\ar[d]&Z\ar@{-->}[r]\ar@{=}[d]&\\
 W_0\ar[r]\ar[d]&W\ar[r]\ar[d]&Z\ar@{-->}[r]&\\
 V_0\ar@{=}[r]\ar@{-->}[d]&V_0\ar@{-->}[d]&&\\
 &&&
 }\]
 Since $Z\in\check{\omega}_{n+1}\subseteq{^\bot}\omega$ and $W_0\in\omega$, the middle row is split, and hence $W\cong W_0\oplus Z$.
 Consider the following commutative diagram
  \[\xymatrix@C=20pt@R=20pt{Y\ar@{=}[r]\ar[d]&Y\ar[d]&&\\
  W_0\oplus Z\ar[r]^{\tiny{
       1 \ \ 0  \choose 0  \ f_2}
  }\ar[d]&W_0\oplus W_1\ar[r]^{\tiny\ \ (0  \ g_2)}\ar[d]&V_1\ar@{-->}[r]\ar@{=}[d]&\\
 V_0\ar[r]\ar@{-->}[d]&V\ar[r]\ar@{-->}[d]&V_1\ar@{-->}[r]&\\
 &&&
 }\]
Since $V_0,V_1\in\check{\omega}_{n}$, $V\in\check{\omega}_{n}$ by the induction hypothesis. Moreover, $W_0\oplus W_1\in\omega$. Thus $Y\in\check{\omega}_{n+1}$.

Now we show that $\check{\omega}_n$ is closed under direct summands.
In fact we will show that
\begin{align*}
\check{\omega}_n&=\{X\in \X_{\omega}|\E^{n+1}(Y,X)=0, \mbox{for all}\ Y\in\sideset{^{\bot}}{}{\mathop{\omega}}\}\\
&=\{X\in \X_{\omega}|\E^{n+1}(Y,X)=0, \mbox{for all}\ Y\in {_\omega\X}\}.
\end{align*}
If $X\in\check{\omega}_n,$ we have an $\E$-triangle sequence
$$X\xrightarrow{d^{0}} W^{0}\xrightarrow{d^{1}}W^{1}\rightarrow\cdots  \rightarrow W^{n-1}\xrightarrow{d^{n}}W_{n}\ \ $$
with each $W^{i}\in\omega.$
Thus using $\C(Y,-)$ to this sequence for any $Y\in\sideset{^{\bot}}{}{\mathop{\omega}}$ {or} $Y\in {_{\omega}\X},$ we have $\E^{n+1}(Y,X)=\E(Y,W^{n})=0.$
On the other hand, for any $X\in \X_{\omega},$ there is an $\E$-triangle sequence
$$X\xrightarrow{d^{0}} W^{0}\xrightarrow{d^{1}}W^{1}\rightarrow\cdots  \rightarrow W^{n-1}\xrightarrow{d^{n}}W^{n}\rightarrow\cdots \ \ \ $$
with $W^{i}\in\sideset{^{\bot}}{}{\mathop{\omega}}.$
Then one can see that every corresponding $K^i$ as in (\ref{E}) belongs to $\X_{\omega}.$
If $\E^{n+1}(Y,X)=0$ for any $Y\in\sideset{^{\bot}}{}{\mathop{\omega}}$ {or} $Y\in {_{\omega}\X},$ then by applying $\C(K^{n+1},-)$ to this sequence, we have $\E(K^{n+1},K^{n})=\E^{n+1}(K^{n+1},X)=0.$
Therefore one can see that the $\E$-triangle $\xymatrix@C=0.5cm{K^{n}\ar[r]&W^{n}\ar[r]&K^{n+1}\ar@{-->}[r]&}$ splits.
Hence $K^{n}\in\omega$ and $X\in\check{\omega}_n.$
Using the above result, we have that $\check{\omega}_n$ is closed under direct summands.
Similarly, one can see that $\hat{\omega}_n$ is closed under extensions and direct summands.

(3) follows from \cite[Lemma 3.6]{ZZhuang}.

(4) Since $Z\in{_\omega\X}$, there is an $\E$-triangle $\xymatrix@C=0.5cm{K_1\ar[r]&W_0\ar[r]&Z\ar@{-->}[r]&}$ with $W_0\in\omega$ and $K_1\in{_\omega\X}$.
Consider the following commutative diagram
  \[\xymatrix@C=20pt@R=20pt{&K_1\ar@{=}[r]\ar[d]&K_1\ar[d]&\\
  X\ar[r]\ar@{=}[d]&W\ar[r]\ar[d]&W_0\ar@{-->}[r]\ar[d]&\\
 X\ar[r]&Y\ar[r]\ar@{-->}[d]&Z\ar@{-->}[r]\ar@{-->}[d]&\\
 &&&
 }\]
 Since $K_1,Y\in{_\omega\X}$, $W\in{_\omega\X}$ by (3). Moreover, $\E(W_0,X)=0$ implies $W\cong X\oplus W_0$, and hence $X\in{_\omega\X}$ by (3).
\end{proof}

\begin{lemma}\label{ch}
Let $\omega$ be self-orthogonal, $n$ a positive integer, $\Y$  a subcategory of $\C$ such that for any $Y\in\Y$, there is an $\E$-triangle $\xymatrix@C=0.5cm{Y'\ar[r]&W\ar[r]&Y\ar@{-->}[r]&}$ with $W\in\omega$ and $Y'\in\Y$. If there is an $\E$-triangle sequence
$$X\rightarrow N_{m}\rightarrow N_{m-1}\cdots\rightarrow N_{1}\rightarrow Z$$
for some positive integer $m$ with each $N_{i}\in\hat{\omega}_{n} (N_{i}\in{_\omega\X} \ \mbox{or} \ N_{i}\in\Y, \mbox{ resp}.)$,  then
\begin{itemize}
  \item[$(1)$] there exists an $\E$-triangle $\xymatrix@C=0.5cm{U\ar[r]&V\ar[r]&X\ar@{-->}[r]&}$ for some $U\in\hat{\omega}_{n-1}$  $(U\in{_\omega\X} \mbox{or}\ U\in\Y,\mbox{ resp}.)$ and for some $V$ such that there is an $\E$-triangle
  sequence
$$V\rightarrow M_{m}\rightarrow M_{m-1}\rightarrow\cdots\rightarrow M_{1}\rightarrow Z$$
 with each $M_{i}\in\omega.$

 If, moreover, $Z\in\hat{\omega}_{n+1}$ $( Z\in{_\omega\X} \mbox{or}\ Z\in\Y, \ \mbox{resp}.)$, then there exists an $\E$-triangle $\xymatrix@C=0.5cm{U\ar[r]&V\ar[r]&X\ar@{-->}[r]&}$ for some $U\in\hat{\omega}_{n-1}$ $(U\in{_\omega\X} \mbox{or}\ U\in\Y, \ \mbox{resp}.)$ and $V\in\check{\omega}_{m}$.
  \item[$(2)$] there exists an $\E$-triangle $\xymatrix@C=0.5cm{X\ar[r]&U\ar[r]&V\ar@{-->}[r]&}$ for some $U\in\hat{\omega}_{n}$ $(U\in{_\omega\X} \mbox{or}\ U\in\Y, \ \mbox{resp}.)$ and for some $V$ such that there is an $\E$-triangle
  sequence
$$V\rightarrow M_{m-1}\rightarrow M_{m-2}\rightarrow\cdots\rightarrow M_{1}\rightarrow Z$$
 with each $M_{i}\in\omega.$

 If, moreover, $Z\in\hat{\omega}_{n+1}$  $(Z\in{_\omega\X} \mbox{or}\ Z\in\Y, \ \mbox{resp}.)$, then there exists an $\E$-triangle $\xymatrix@C=0.5cm{X\ar[r]&U\ar[r]&V\ar@{-->}[r]&}$ for some $U\in\hat{\omega}_{n}$ $(U\in{_\omega\X} \mbox{or}\ U\in\Y, \ \mbox{resp}.)$ and $V\in\check{\omega}_{m-1}$.
\end{itemize}
\end{lemma}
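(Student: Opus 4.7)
The plan is to prove parts (1) and (2) simultaneously by induction on $m$, handling the three cases ($N_i \in \hat{\omega}_n$, $N_i \in {_\omega\X}$, $N_i \in \Y$) in parallel via a common structural property. Each class admits an \emph{$\omega$-deflation property}: every $N$ in the class fits into an $\E$-triangle $U \to W \to N \dashrightarrow$ with $W \in \omega$ and $U$ in a ``reduced'' class (respectively $\hat{\omega}_{n-1}$, ${_\omega\X}$, $\Y$)---from the definition of $\hat{\omega}_n$, from the defining sequence of ${_\omega\X}$, or from the standing assumption on $\Y$. The main categorical tools are axiom (ET4)$\op$ (which glues two $\E$-triangles whose shared object is the middle of one and the right end of the other, producing a pullback-type $3\times 3$ diagram), Lemma~\ref{ile}(1) (which glues two $\E$-triangles sharing a right-end object), the extension-closure of $\hat{\omega}_n$ and ${_\omega\X}$ (Lemma~\ref{ort}(2)(3); for $\Y$ a parallel closure is used), and the inclusion $\hat{\omega}_n \subseteq \omega^{\bot}$ coming from the self-orthogonality of $\omega$.

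For the base case $m=1$ of part (1), I pick an $\omega$-deflation $U \to W_1 \to N_1 \dashrightarrow$ of $N_1$ and apply (ET4)$\op$ to it together with the given $X \to N_1 \to Z \dashrightarrow$, which share $N_1$ in the required positions. The output is $V$ with $\E$-triangles $U \to V \to X \dashrightarrow$ and $V \to W_1 \to Z \dashrightarrow$, delivering the first part with $M_1 = W_1 \in \omega$. For the moreover clause at $m=1$, this direct construction does not yet give $V \in \check{\omega}_1$ because the cosyzygy $Z$ need not lie in $\omega$. The fix is a pre-processing step: using the stronger hypothesis on $Z$ (say $Z \in \hat{\omega}_{n+1}$), I pick an $\omega$-deflation $K \to W_0 \to Z \dashrightarrow$ with $K$ in the same class as $N_1$, and then apply Lemma~\ref{ile}(1) to $X \to N_1 \to Z$ and $K \to W_0 \to Z$ to obtain $P$ with $\E$-triangles $X \to P \to W_0 \dashrightarrow$ and $K \to P \to N_1 \dashrightarrow$. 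Extension-closure forces $P$ into the class, so the already-proved first part of $m=1$ applies to $X \to P \to W_0$ and produces $U$ in the reduced class together with $V$ fitting into $U \to V \to X \dashrightarrow$ and $V \to W' \to W_0 \dashrightarrow$ for some $W' \in \omega$. Both $W'$ and $W_0$ lie in $\omega$, so this triangle witnesses $V \in \check{\omega}_1$.

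For the inductive step $m \geq 2$, I split the sequence at its first $\E$-triangle $X \to N_m \to K \dashrightarrow$ (with $K$ the first cosyzygy) and apply the inductive hypothesis to the length-$(m-1)$ tail $K \to N_{m-1} \to \cdots \to N_1 \to Z$. This yields $U'$ in the reduced class, $V'$ with $U' \to V' \to K \dashrightarrow$ and sequence $V' \to M_{m-1} \to \cdots \to M_1 \to Z$ with $M_i \in \omega$, and, under the moreover hypothesis on $Z$, also $V' \in \check{\omega}_{m-1}$. Lemma~\ref{ile}(1) applied to $X \to N_m \to K$ and $U' \to V' \to K$ produces $P$ with $\E$-triangles $X \to P \to V' \dashrightarrow$ and $U' \to P \to N_m \dashrightarrow$; extension-closure places $P$ in the class. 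The $m=1$ first part applied to $X \to P \to V'$ then yields $U$ in the reduced class and $V$ with $U \to V \to X \dashrightarrow$ and $V \to W \to V' \dashrightarrow$ for some $W \in \omega$. Splicing $V \to W \to V'$ with the inductive sequence $V' \to M_{m-1} \to \cdots \to M_1 \to Z$ yields the required length-$m$ sequence $V \to W \to M_{m-1} \to \cdots \to M_1 \to Z$, and in the moreover case the triangle $V \to W \to V'$ with $W \in \omega$ and $V' \in \check{\omega}_{m-1}$ delivers $V \in \check{\omega}_m$ directly from the recursive characterization of $\check{\omega}_m$.

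Part (2) is proved by the dual argument, with (ET4)$\op$ replaced by (ET4), Lemma~\ref{ile}(1) by Lemma~\ref{ile}(2), and the inclusion $\hat{\omega}_n \subseteq \omega^{\bot}$ by $\check{\omega}_n \subseteq {}^{\bot}\omega$. The main obstacle is the moreover clause at the base case $m=1$: the naive pullback leaves a cosyzygy equal to the original $Z$, which need not belong to $\omega$, so $V \in \check{\omega}_1$ does not follow. The pre-processing step above---absorbing one layer of $Z$'s own $\omega$-deflation into the middle term by Lemma~\ref{ile}(1), which is precisely where the higher-index hypothesis on $Z$ is used---replaces the triangle by one whose cosyzygy is a genuine object of $\omega$, after which the $m=1$ construction supplies the coresolution.
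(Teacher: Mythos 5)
Your treatment of part (1) is correct and is essentially the paper's own argument: the same induction on $m$, splitting off the first $\E$-triangle $X\to N_m\to K$, applying the inductive hypothesis to the tail, gluing via Lemma~\ref{ile}(1), and finishing with the (ET4)$\op$-style base construction. Your handling of the ``moreover'' clause differs only cosmetically: you absorb one layer of the $\omega$-resolution of $Z$ at the base case and propagate $\check{\omega}$-membership through the induction, whereas the paper replaces the tail $N_1\to Z$ by $Y'\to M_Z$ with $M_Z\in\omega$ once and then reapplies the first statement to the modified sequence ending in $M_Z$; both work.

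Part (2), however, is not the dual of part (1), and the dualization you describe would prove a different statement. In part (2) the hypothesis classes are unchanged --- the $N_i$ still lie in $\hat{\omega}_{n}$ (or ${_\omega\X}$, or $\Y$), all of which are defined by $\omega$-\emph{resolutions} --- so there is no triangle of the form $N_i\to W\to N_i'$ with $W\in\omega$ to dualize against, and your proposed substitution of $\hat{\omega}_n\subseteq\omega^{\bot}$ by $\check{\omega}_n\subseteq{^{\bot}\omega}$ has nothing to attach to; a literal dualization yields the companion lemma for $\check{\omega}_n$, $\X_\omega$ and a cogenerator-type $\Y$, not part (2). (Your remark that ``$V\in\check{\omega}_1$ does not follow'' at $m=1$ is a symptom: part (2) at $m=1$ actually requires $V\in\check{\omega}_{0}=\omega$, and its non-moreover form is trivial since $U=N_1$, $V=Z$ already works.) The intended route is much shorter: take the $\E$-triangle $U'\to V'\to X$ and the coresolution $V'\to M_m\to\cdots\to M_1\to Z$ produced by part (1), set $V={\rm Cone}(V'\to M_m)$, and apply (ET4) to $U'\to V'\to X$ and $V'\to M_m\to V$. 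This yields $X\to U\to V$ with $U={\rm Cone}(U'\to M_m)$; splicing an $\omega$-resolution of $U'\in\hat{\omega}_{n-1}$ with $M_m\in\omega$ shows $U\in\hat{\omega}_{n}$, while $V$ inherits the length-$(m-1)$ coresolution $V\to M_{m-1}\to\cdots\to M_1\to Z$, and lies in $\check{\omega}_{m-1}$ in the ``moreover'' case.
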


\begin{proof} We only show the case of $\hat{\omega}.$
The case of ${_\omega\X}$ or $\Y$ can be proved similarly.

(1) We proceed by induction on $m$.

In case $m=1,$ there is an $\E$-triangle $\xymatrix@C=0.5cm{X\ar[r]&N_{1}\ar[r]&Z\ar@{-->}[r]&}$  with $N_{1}\in\hat{\omega}_{n}.$
Let $\xymatrix@C=0.5cm{U\ar[r]&M_{1}\ar[r]&N_{1}\ar@{-->}[r]&}$ be an $\E$-triangle with $M_{1}\in \omega$ and $U\in\hat{\omega}_{n-1}.$
Consider the following commutative diagram
\[\xymatrix@C=20pt@R=20pt{U\ar@{=}[r]\ar[d]&U\ar[d]&&\\
  V\ar[r]\ar[d]&M_1\ar[r]\ar[d]&Z\ar@{-->}[r]\ar@{=}[d]&\\
 X\ar[r]\ar@{-->}[d]&N_1\ar[r]\ar@{-->}[d]&Z\ar@{-->}[r]&\\
 &&&
 }\]
Then the first column and the second row are the desired $\E$-triangles.

Now suppose that the result holds for $m-1.$ Let $X'={\rm Cone}(X\to N_{m}),$ then there is an $\E$-triangle sequence
$$X'\rightarrow N_{m-1}\rightarrow\cdots\rightarrow N_{1}\rightarrow Z$$
 with each $N_{i}\in\hat{\omega}_{n}.$ By the inductive hypothesis, there are $\E$-triangles:
$\xymatrix@C=0.5cm{U'\ar[r]&V'\ar[r]&X'\ar@{-->}[r]&}$
and an $\E$-triangle sequence
$$V'\rightarrow M_{m-1}\rightarrow \cdots\rightarrow M_{1}\rightarrow Z$$
 with $U'\in\hat{\omega}_{n-1}$ and each $M_{i}\in\omega.$
Then  we have the following commutative diagram
$$\xymatrix@C=20pt@R=20pt{&U'\ar@{=}[r]\ar[d]&U'\ar[d]\\
X\ar[r]\ar@{=}[d]&Y\ar[r]\ar[d]&V'\ar[d]\ar@{-->}[r]&\\
X\ar[r]&N_{m}\ar[r]\ar@{-->}[d]&X'\ar@{-->}[r]\ar@{-->}[d]&&\\
&&&}$$
It is easy to check that   $Y\in\hat{\omega}_{n}$. % by Lemma \ref{ort}.
Thus
there exists an $\E$-triangle $\xymatrix@C=0.5cm{U\ar[r]&M_m\ar[r]&Y\ar@{-->}[r]&}$ with $M_m\in\omega$ and $U\in\hat{\omega}_{n-1}.$ Consider the commutative diagram
$$\xymatrix@C=20pt@R=20pt{U\ar@{=}[r]\ar[d]&U\ar[d]&\\
V\ar[r]\ar[d]&M_m\ar[d]\ar[r]&V'\ar@{=}[d]\ar@{-->}[r]&\\
X\ar[r]\ar@{-->}[d]&Y\ar[r]\ar@{-->}[d]&V'\ar@{-->}[r]&\\
&&&}$$
we can get an $\E$-triangle $\xymatrix@C=0.5cm{U\ar[r]&V\ar[r]&X\ar@{-->}[r]&}$ with $U\in\hat{\omega}_{n-1}$ and  $V$ satisfying that there is an $\E$-triangle
  sequence
$$V\rightarrow M_{m}\rightarrow M_{m-1}\rightarrow\cdots\rightarrow M_{1}\rightarrow Z$$
 with each $M_{i}\in\omega$.

Now assume  $Z\in\hat{\omega}_{n+1}$. Then there is an $\E$-triangle $\xymatrix@C=0.5cm{Z'\ar[r]&M_Z\ar[r]&Z\ar@{-->}[r]&}$ with $M_Z\in\omega$ and $Z'\in \hat{\omega}_{n}$.
Set $X_1={\rm CoCone}(N_1\to Z)$, and consider the following commutative diagram
\[\xymatrix@C=20pt@R=20pt{&Z'\ar@{=}[r]\ar[d]&Z'\ar[d]&\\
  X_1\ar[r]\ar@{=}[d]&Y'\ar[r]\ar[d]&M_Z\ar@{-->}[r]\ar[d]&\\
 X_1\ar[r]&N_1\ar[r]\ar@{-->}[d]&Z\ar@{-->}[r]\ar@{-->}[d]&\\
 &&&
 }\]
 Since $Z',N_1\in\hat{\omega}_{n}$, we have $Y'\in\hat{\omega}_{n}$. Moreover, we obtain an $\E$-triangle sequence
 $$X\rightarrow N_{m}\rightarrow \cdots\rightarrow N_2\rightarrow Y'\rightarrow M_Z.$$
 Thus, by the first statement, we can get an $\E$-triangle $\xymatrix@C=0.5cm{U\ar[r]&V\ar[r]&X\ar@{-->}[r]&}$ with $U\in\hat{\omega}_{n-1}$ and  $V$ satisfying that there is an $\E$-triangle
  sequences
$$V\rightarrow M_{m}\rightarrow M_{m-1}\rightarrow\cdots\rightarrow M_{1}\rightarrow M_Z$$
 with each $M_{i}\in\omega$. Moreover, $M_Z\in\omega$ implies $V\in\check{\omega}_{m}$.

 (2) By (1), there is  an $\E$-triangle $\xymatrix@C=0.5cm{U'\ar[r]&V'\ar[r]&X\ar@{-->}[r]&}$ with $U'\in\hat{\omega}_{n-1}$ and  $V'$ satisfying that there is an $\E$-triangle
  sequences
$$V'\rightarrow M_{m}\rightarrow M_{m-1}\rightarrow\cdots\rightarrow M_{1}\rightarrow Z$$
 with each $M_{i}\in\omega$. Set $V={\rm Cone}(V'\to M_m)$, and consider the following diagram
 $$\xymatrix@C=20pt@R=20pt{
U'\ar[r]\ar@{=}[d]&V'\ar[r]\ar[d]&X\ar[d]\ar@{-->}[r]&\\
U'\ar[r]&M_{m}\ar[r]\ar[d]&U\ar@{-->}[r]\ar[d]&\\
&V\ar@{=}[r]\ar@{-->}[d]&V'\ar@{-->}[d]&\\
&&&}$$
 Then $U'\in\hat{\omega}_{n-1}$ implies $U\in\hat{\omega}_{n}$.

 The proof of the second statement is similar to that of (1).
 \end{proof}

Now we give a relationship among $\widehat{\omega}$, $\check{\omega}$ and $\widetilde{\omega}$, which shows that $\widetilde{\omega}$
can be obtained by taking cones (or cocones) from $\widehat{\omega}$ to $\check{\omega}$.

\begin{proposition}\label{equ}
Let $\omega$ be self-orthogonal. The following statements are equivalent.
\begin{itemize}
  \item [$(1)$] $M\in\widetilde{\omega}$ (resp. $M\in \check{{_\omega\X}}$).
  \item [$(2)$] There is an $\E$-triangle $\xymatrix@C=0.5cm{U\ar[r]&V\ar[r]&M\ar@{-->}[r]&}$ with $U\in \widehat{\omega}$ (resp. $U\in{_\omega\X}$) and $V\in \check{\omega}$.
  \item [$(3)$] There is an $\E$-triangle $\xymatrix@C=0.5cm{M\ar[r]&U'\ar[r]&V'\ar@{-->}[r]&}$ with $U'\in \widehat{\omega}$ (resp. $U'\in{_\omega\X}$) and $V'\in \check{\omega}$.
\end{itemize}
\end{proposition}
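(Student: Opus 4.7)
My plan is to establish the three implications $(3) \Rightarrow (1)$, $(2) \Rightarrow (3)$ and $(1) \Rightarrow (3)$ (the parallel implications $(3) \Rightarrow (2)$ and $(1) \Rightarrow (2)$ being formally dual); together these close both cycles. The $\widetilde{\omega}$ and $\check{{_\omega\X}}$ cases will run in parallel by invoking the appropriate branch of Lemmas \ref{ort} and \ref{ch}.

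For $(3) \Rightarrow (1)$ I splice directly. Given $M \to U' \to V'$ with $U' \in \widehat{\omega}$ and a $\check{\omega}$-coresolution $V' \to W^0 \to W^1 \to \cdots \to W^n$ of $V'$, the concatenated sequence
\begin{equation*}
M \to U' \to W^0 \to W^1 \to \cdots \to W^n
\end{equation*}
is an $\E$-triangle sequence whose underlying $\E$-triangles are $M \to U' \to V'$ together with those of the coresolution of $V'$; since $U', W^0, \ldots, W^n$ all lie in $\widehat{\omega}$, this proves $M \in \widetilde{\omega}$. For $(2) \Rightarrow (3)$ I apply (ET4) to the composite $U \to V \to W^0$, where $V \to W^0 \to V_1$ is the first step of a $\check{\omega}$-coresolution of $V$ (so $V_1 \in \check{\omega}$). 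This produces $\E$-triangles $U \to W^0 \to N^0$ and $M \to N^0 \to V_1$; the second is precisely condition $(3)$ for $M$ provided $N^0 \in \widehat{\omega}$. The latter I verify by prepending the $\E$-triangle $U \to W^0 \to N^0$ to a $\widehat{\omega}$-resolution $W_k \to \cdots \to W_0 \to U$ of $U$: the resulting $\E$-triangle sequence $W_k \to \cdots \to W_0 \to W^0 \to N^0$ witnesses $N^0 \in \hat{\omega}_{k+1}$.

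For $(1) \Rightarrow (3)$ I feed a $\widetilde{\omega}$-coresolution $M \to N^0 \to N^1 \to \cdots \to N^m$ of $M$ into Lemma \ref{ch}. Choose $n$ with every $N^i \in \hat{\omega}_n$; then in particular $N^m \in \hat{\omega}_n \subseteq \hat{\omega}_{n+1}$, so the ``moreover'' part of Lemma \ref{ch}(2), applied with $X = M$, $Z = N^m$ and the intermediate $N$-terms reindexed from the coresolution, delivers an $\E$-triangle $M \to U \to V$ with $U \in \hat{\omega}_n \subseteq \widehat{\omega}$ and $V \in \check{\omega}_{m-1} \subseteq \check{\omega}$. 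The implication $(1) \Rightarrow (2)$ is identical using Lemma \ref{ch}(1), and $(3) \Rightarrow (2)$ is the formal dual of $(2) \Rightarrow (3)$ via (ET4)$^{\mathrm{op}}$.

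The key technical point is the closure principle driving $(2) \Rightarrow (3)$: the cone of an inflation $U \to W^0$ with $U \in \widehat{\omega}$ and $W^0 \in \omega$ again lies in $\widehat{\omega}$. This is \emph{not} covered by the extension-closure of Lemma \ref{ort}(2), which only fills in the middle term of an $\E$-triangle, so it must be built by hand as above. Once this is in place, the $\check{{_\omega\X}}$ version of the proposition follows verbatim with $\widehat{\omega}$ replaced by ${_\omega\X}$: the analogous closure is then supplied directly by Lemma \ref{ort}(3), and the ${_\omega\X}$-branch of Lemma \ref{ch} handles the $(1) \Rightarrow (2), (3)$ step.
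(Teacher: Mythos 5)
Your proposal is correct and follows essentially the same route as the paper: $(1)\Rightarrow(2)$ via Lemma \ref{ch}(1), $(2)\Rightarrow(3)$ by pushing the triangle along the first step of the $\omega$-coresolution of $V$ and checking that the cone of $U\to W^0$ stays in $\widehat{\omega}$ (resp.\ ${_\omega\X}$) --- which is exactly what the paper means by ``it follows from the proof of Lemma \ref{ch}(2)'' --- and $(3)\Rightarrow(1)$ by splicing the coresolution of $V'$ onto $M\to U'\to V'$. The extra implications you supply are redundant but harmless, and your explicit verification that ${\rm Cone}(U\to W^0)\in\hat{\omega}_{k+1}$ correctly fills in the step the paper leaves implicit.
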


\begin{proof}
(1) $\Rightarrow$ (2) Since $M\in\widetilde{\omega}=\check{\hat{\omega}}$, there exists integers $m,n$ such that there is an $\E$-triangle sequence
$$M\rightarrow N_{m}\rightarrow N_{m-1}\cdots\rightarrow N_{1}\rightarrow N_0$$
with each $N_i\in\hat{\omega}_n$. Thus, by Lemma \ref{ch}(1), there is an $\E$-triangle $\xymatrix@C=0.5cm{U\ar[r]&V\ar[r]&M\ar@{-->}[r]&}$
with $U\in \widehat{\omega}$ (resp. $U\in{_\omega\X}$) and $V\in \check{\omega}$.

(2) $\Rightarrow$ (3) It follows from the proof of Lemma \ref{ch}(2).

(3) $\Rightarrow$ (1) Since $V'\in \check{\omega}$, there is an an $\E$-triangle sequence
$$V'\rightarrow C^0\rightarrow C^1 \rightarrow\cdots\rightarrow C^m$$
with each $C^i\in\omega$. Composing it with the $\E$-triangle $\xymatrix@C=0.5cm{M\ar[r]&U'\ar[r]&V'\ar@{-->}[r]&}$, we can deduce that $M\in\widetilde{\omega}$.
\end{proof}

Using it, we can get the following equalities.

\begin{proposition}\label{3.4}
Let $\omega$ be self-orthogonal.
\begin{itemize}
  \item [$(1)$] ${^\bot({_\omega\X})}\cap \check{{_\omega\X}}=\check{\omega}$.
  \item [$(2)$] $({^\bot({_\omega\X})})^\bot\cap \check{{_\omega\X}}={_\omega{\X}}$.
\end{itemize}
\end{proposition}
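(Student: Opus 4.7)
The plan is to exploit Proposition \ref{equ}, which provides an ``approximation'' $\E$-triangle for any object of $\check{{_\omega\X}}$, and then use the orthogonality hypothesis to force that triangle to split. Closure under direct summands from Lemma \ref{ort} then finishes the argument. In other words, the whole proof reduces to: approximate, split, take a summand.

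For (1), the inclusion $\check{\omega}\subseteq {^\bot({_\omega\X})}\cap \check{{_\omega\X}}$ is immediate: $\omega\subseteq {_\omega\X}$ gives $\check{\omega}\subseteq\check{{_\omega\X}}$, while Lemma \ref{ort}(1) together with ${_\omega\X}\subseteq\omega^{\bot}$ yields $\check{\omega}\bot{_\omega\X}$, i.e.\ $\check{\omega}\subseteq {^\bot({_\omega\X})}$. For the reverse inclusion, I take $M\in {^\bot({_\omega\X})}\cap \check{{_\omega\X}}$ and use Proposition \ref{equ} to obtain an $\E$-triangle $\xymatrix@C=0.5cm{U\ar[r]&V\ar[r]&M\ar@{-->}[r]^\delta&}$ with $U\in{_\omega\X}$ and $V\in\check{\omega}$. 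Since $\delta\in\E(M,U)$ and $M\in {^\bot({_\omega\X})}$, $U\in{_\omega\X}$, we have $\delta=0$; the triangle splits, so $M$ is a direct summand of some $V\in\check{\omega}_n$. Closure of $\check{\omega}_n$ under direct summands (Lemma \ref{ort}(2)) then gives $M\in\check{\omega}$.

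For (2), the containment ${_\omega\X}\subseteq ({^\bot({_\omega\X})})^\bot\cap \check{{_\omega\X}}$ is immediate from the definition of ${^\bot({_\omega\X})}$ and the obvious inclusion ${_\omega\X}\subseteq\check{{_\omega\X}}$. Conversely, given $M\in ({^\bot({_\omega\X})})^\bot\cap \check{{_\omega\X}}$, I apply Proposition \ref{equ} to get an $\E$-triangle $\xymatrix@C=0.5cm{M\ar[r]&U'\ar[r]&V'\ar@{-->}[r]^{\delta'}&}$ with $U'\in{_\omega\X}$ and $V'\in\check{\omega}$. Part (1) delivers $V'\in\check{\omega}\subseteq {^\bot({_\omega\X})}$, and the hypothesis on $M$ then gives $\delta'\in\E(V',M)=0$. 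The triangle splits, making $M$ a direct summand of $U'\in{_\omega\X}$, and Lemma \ref{ort}(3) (closure of ${_\omega\X}$ under direct summands) yields $M\in{_\omega\X}$.

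The only real subtlety is the logical dependence of (2) on (1): one must first establish $\check{\omega}\subseteq {^\bot({_\omega\X})}$ in order to know that the object $V'$ produced by Proposition \ref{equ} actually lies in the orthogonal category that annihilates $M$. Beyond this sequencing, the proof is a mechanical application of the approximation result together with the standard splitting criterion that $\delta=0\in\E(Z,X)$ realizes the split $\E$-triangle $\xymatrix@C=0.5cm{X\ar[r]&X\oplus Z\ar[r]&Z\ar@{-->}[r]&}$.
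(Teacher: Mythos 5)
Your proof is correct and follows essentially the same route as the paper: both directions of each equality are obtained by applying Proposition \ref{equ} to produce the approximation $\E$-triangle, observing that the relevant extension group vanishes so the triangle splits, and invoking the closure of $\check{\omega}$ (resp. ${_\omega\X}$) under direct summands from Lemma \ref{ort}. Your explicit remark that part (2) relies on the inclusion $\check{\omega}\subseteq{^\bot({_\omega\X})}$ from part (1) is exactly the step the paper uses as well.
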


\begin{proof}
(1) It is easy to check that $\check{\omega}\subseteq {^\bot({_\omega\X})}$. Moreover, since $\omega\subseteq {_\omega\X}$, we have $\check{\omega}\subseteq \check{{_\omega\X}}$. Thus
$\check{\omega}\subseteq {^\bot({_\omega\X})}\cap\check{{_\omega\X}}$. Conversely, let $M\in{^\bot({_\omega\X})}\cap\check{{_\omega\X}}$. By Proposition \ref{equ}, there is an $\E$-triangle $\xymatrix@C=0.5cm{U\ar[r]&V\ar[r]&M\ar@{-->}[r]&}$ with $U\in {_\omega\X}$  and $V\in \check{\omega}$. $M\in {^\bot({_\omega\X})}$ implies that $V\cong U\oplus M$, and hence $M\in\check{\omega}$ by Lemma \ref{ort}(2).
Thus ${^\bot({_\omega\X})}\cap\check{{_\omega\X}}\subseteq\check{\omega}$. Therefore, $\check{\omega}= {^\bot({_\omega\X})}\cap\check{{_\omega\X}}$.

(2) Clearly, ${_\omega{\X}}\subseteq({^\bot({_\omega\X})})^\bot\cap \check{{_\omega\X}}$. Conversely, let $M\in({^\bot({_\omega\X})})^\bot\cap \check{{_\omega\X}}$. By Proposition \ref{equ}, there is an $\E$-triangle $\xymatrix@C=0.5cm{M\ar[r]&U'\ar[r]&V'\ar@{-->}[r]&}$ with  $U'\in{_\omega\X}$ and $V'\in \check{\omega}$. Since $V'\in\check{\omega}\subseteq {^\bot({_\omega\X})}$, we have $\E(V',M)=0$ and hence $U'\cong M\oplus V'$. Thus
$M\in{_\omega{\X}}$, and so $({^\bot({_\omega\X})})^\bot\cap \check{{_\omega\X}}\subseteq{_\omega{\X}}$. Therefore, $({^\bot({_\omega\X})})^\bot\cap \check{{_\omega\X}}={_\omega{\X}}$.
\end{proof}

\begin{proposition}\label{cone-co}
Let $\omega$ be self-orthogonal. Then $\check{{_\omega\X}}$ (resp. $\widetilde{\omega}$) is closed under extensions, cocones of deflations and cones of inflations.
\end{proposition}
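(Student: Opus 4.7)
The plan is to exploit the triangle characterization of $\check{{_\omega\X}}$ given by Proposition~\ref{equ}: $M\in\check{{_\omega\X}}$ if and only if there is an $\E$-triangle $M\to U'\to V'$ (equivalently, $U\to V\to M$) with $U',U\in{_\omega\X}$ and $V',V\in\check{\omega}$. I would establish the three closure properties in the order \emph{extension} $\Rightarrow$ \emph{cocone} $\Rightarrow$ \emph{cone}, each step invoking the preceding ones together with the closure properties of ${_\omega\X}$ and $\check{\omega}$ recorded in Lemma~\ref{ort}(2)(3) and the orthogonality $\check{\omega}\bot\omega^{\bot}$ of Lemma~\ref{ort}(1). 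The parallel statement for $\widetilde{\omega}$ is handled by running the same arguments with $\widehat{\omega}$ in place of ${_\omega\X}$.

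For \emph{extension closure}, given $X\to Y\to Z$ with $X,Z\in\check{{_\omega\X}}$, I would choose $\E$-triangles $X\to A_X\to B_X$ and $Z\to A_Z\to B_Z$ via Proposition~\ref{equ}(3), with $A_X,A_Z\in{_\omega\X}$ and $B_X,B_Z\in\check{\omega}$. Lemma~\ref{ile}(2) applied to the two $\E$-triangles starting at $X$ yields an object $M$ with $\E$-triangles $A_X\to M\to Z$ and $Y\to M\to B_X$. To process the former, I would lift its $\E$-extension class in $\E(Z,A_X)$ along $Z\to A_Z$; the obstruction lives in $\E^{2}(B_Z,A_X)$, which vanishes because $B_Z\in\check{\omega}$ and $A_X\in{_\omega\X}\subseteq\omega^{\bot}$ by Lemma~\ref{ort}(1). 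The lift produces an $\E$-triangle $A_X\to F\to A_Z$ with $F\in{_\omega\X}$ (Lemma~\ref{ort}(3)) and, via ET3 together with a $3\times 3$-style diagram, an $\E$-triangle $M\to F\to B_Z$. Applying ET4 to the composition of inflations $Y\to M\to F$ then produces $Y\to F\to G$ and $B_X\to G\to B_Z$; the latter forces $G\in\check{\omega}$ by Lemma~\ref{ort}(2), and Proposition~\ref{equ}(3) delivers $Y\in\check{{_\omega\X}}$.

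For \emph{cocone closure}, given $X\to Y\to Z$ with $Y,Z\in\check{{_\omega\X}}$, I would pick $Y\to A_Y\to B_Y$ with $A_Y\in{_\omega\X}$, $B_Y\in\check{\omega}$, and apply ET4 to the composition $X\to Y\to A_Y$ to obtain $\E$-triangles $X\to A_Y\to M$ and $Z\to M\to B_Y$. Extension closure, just proved, applied to the latter (using $B_Y\in\check{\omega}\subseteq\check{{_\omega\X}}$) yields $M\in\check{{_\omega\X}}$. Splicing $X\to A_Y\to M$ onto an $\E$-triangle sequence $M\to N^{0}\to N^{1}\to\cdots\to N^{k}$ that realizes $M\in\check{{_\omega\X}}$ (with each $N^{i}\in{_\omega\X}$) produces an $\E$-triangle sequence $X\to A_Y\to N^{0}\to\cdots\to N^{k}$ whose interior terms all lie in ${_\omega\X}$, so $X\in\check{{_\omega\X}}$.

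For \emph{cone closure}, given $X\to Y\to Z$ with $X,Y\in\check{{_\omega\X}}$, I would choose $X\to A_X\to B_X$ and apply Lemma~\ref{ile}(2) to the two $\E$-triangles starting at $X$, producing $M$ with $\E$-triangles $Y\to M\to B_X$ and $A_X\to M\to Z$. Extension closure applied to the first gives $M\in\check{{_\omega\X}}$; picking $M\to A_M\to K$ via Proposition~\ref{equ}(3) and applying ET4 to the composition $A_X\to M\to A_M$ yields $A_X\to A_M\to Q$ (with $Q\in{_\omega\X}$ by Lemma~\ref{ort}(3)) and $Z\to Q\to K$. Cocone closure now forces $Z\in\check{{_\omega\X}}$. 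The $\widetilde{\omega}$ version uses the same three arguments with ${_\omega\X}$ replaced by $\widehat{\omega}$, invoking extension closure of $\widehat{\omega}$ (Lemma~\ref{ort}(2)) and its closure under cones of inflations (a short induction via Lemma~\ref{ch}). The main technical obstacle is the extension-closure step: everything else follows essentially formally via octahedral manipulations, but extension closure itself hinges on the explicit lifting of an $\E$-extension along $Z\to A_Z$, which is exactly where the orthogonality $\check{\omega}\bot\omega^{\bot}$ of Lemma~\ref{ort}(1) plays the decisive role.
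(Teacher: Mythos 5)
Your proposal is correct, and it follows the paper's overall strategy --- reduce everything to the triangle characterization of Proposition \ref{equ}, establish extension closure first, then derive the other two closures from it --- but the execution of the key steps is genuinely different. For extension closure the paper resolves the third term using form (2) of Proposition \ref{equ} (an $\E$-triangle $X_N\to N'\to N$), pulls the pushed-out triangle $X_L\to Y\to N$ back along $N'\to N$, and observes that the resulting middle term splits as $X_L\oplus N'$ because $\E(N',X_L)=0$; two further octahedra then yield the required triangle. You instead resolve $Z$ using form (3) and lift the class of $A_X\to M\to Z$ along $Z\to A_Z$, the obstruction being killed by $\E^{2}(B_Z,A_X)=0$; this trades the paper's degree-one vanishing plus splitting for a degree-two vanishing plus lifting, and saves one diagram. (Your intermediate claim that $M\to F\to B_Z$ is an $\E$-triangle is most cleanly justified by (ET4)$\op$ applied to the composite deflation $F\to A_Z\to B_Z$: its cocone realizes $f^{\ast}\epsilon=\delta$ and is therefore isomorphic to $M$; "ET3 plus a $3\times3$ diagram" is vaguer than what is actually needed, so spell this out.) Your cocone step is also leaner: the paper runs Proposition \ref{equ} once more and takes a pullback, whereas you simply splice $X\to A_Y\to M$ onto a finite ${_\omega\X}$-coresolution of $M$, which is immediate from the definition of $\check{{_\omega\X}}$; and your cone step is essentially what the paper's ``similar to (2)'' must amount to. The one item you should make explicit for the $\widetilde{\omega}$ version is that $\hat{\omega}$ is closed under cones of inflations (needed to conclude $Q\in\hat{\omega}$ in the cone step): this is not literally contained in Lemma \ref{ort}, but it follows in one line from (ET4)$\op$ together with the extension closure of $\hat{\omega}_n$, and the paper's own proof of part (3) tacitly requires the same fact.
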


\begin{proof}
Let $\xymatrix@C=0.5cm{L\ar[r]&M\ar[r]&N\ar@{-->}[r]&}$ be an $\E$-triangle.

(1) If $L,N\in\check{{_\omega\X}}$, then by Proposition \ref{equ}, there exist $\E$-triangles $\xymatrix@C=0.5cm{L\ar[r]&X_L\ar[r]&L'\ar@{-->}[r]&}$, and $\xymatrix@C=0.5cm{X_N\ar[r]&N'\ar[r]&N\ar@{-->}[r]&}$
with $L',N'\in\check{\omega}$ and $X_L,X_N\in{_\omega{\X}}$. Consider the following commutative diagrams
\[\xymatrix@C=20pt@R=20pt{L\ar[r]\ar[d]&M\ar[r]\ar[d]&N\ar@{-->}[r]\ar@{=}[d]&\\
 X_L\ar[r]\ar[d]&Y\ar[r]\ar[d]&N\ar@{-->}[r]&\\
 L'\ar@{=}[r]\ar@{-->}[d]&L'\ar@{-->}[d]&&\\
 &&&
 }\]
 and
 \[\xymatrix@C=20pt@R=20pt{&X_N\ar@{=}[r]\ar[d]&X_N\ar[d]&\\
  X_L\ar[r]\ar@{=}[d]&Y'\ar[r]\ar[d]&N'\ar@{-->}[r]\ar[d]&\\
 X_L\ar[r]&Y\ar[r]\ar@{-->}[d]&N\ar@{-->}[r]\ar@{-->}[d]&\\
 &&&
 }\]
 Since $X_L\in{\check{\X}}\subseteq\omega^\bot$ and $N'\in\check{\omega}$, $\E(N',X_L)=0$ by Lemma \ref{ort}. Thus $Y'\cong X_L\oplus N'\in\check{{_\omega\X}}$ since $N'\in\check{\omega}\subseteq\check{{_\omega\X}}$.
 Then there is an $\E$-triangle $\xymatrix@C=0.5cm{Y'\ar[r]&X_{Y'}\ar[r]&Z\ar@{-->}[r]&}$ with $X_{Y'}\in{_\omega{\X}}$ and $Z\in\check{\omega}$ by Proposition \ref{equ}. Consider the following commutative diagrams
 $$\xymatrix@C=20pt@R=20pt{
X_N\ar[r]\ar@{=}[d]&Y'\ar[r]\ar[d]&Y\ar[d]\ar@{-->}[r]&\\
X_N\ar[r]&X_{Y'}\ar[r]\ar[d]&X_Z\ar@{-->}[r]\ar[d]&\\
&Z\ar@{=}[r]\ar@{-->}[d]&Z\ar@{-->}[d]&\\
&&&}$$
and
$$\xymatrix@C=20pt@R=20pt{
M\ar[r]\ar@{=}[d]&Y\ar[r]\ar[d]&L'\ar[d]\ar@{-->}[r]&\\
M\ar[r]&X_Z\ar[r]\ar[d]&M'\ar@{-->}[r]\ar[d]&\\
&Z\ar@{=}[r]\ar@{-->}[d]&Z\ar@{-->}[d]&\\
&&&}$$
Since $X_N,X_{Y'}\in{_\omega{\X}}$, $X_Z\in{_\omega{\X}}$ by Lemma \ref{ort}(3). Since $L',Z\in\check{\omega}$, $M'\in\check{\omega}$  by Lemma \ref{ort}(2). Thus by Proposition \ref{equ}, $M\in\check{{_\omega\X}}$.

(2) Assume $M,N\in\check{{_\omega\X}}$. By Proposition \ref{equ}, there is an $\E$-triangle $\xymatrix@C=0.5cm{M\ar[r]&U'\ar[r]&V'\ar@{-->}[r]&}$ with $U'\in{_\omega{\X}}$ and $V'\in\check{\omega}$.
Consider the following commutative diagram
$$\xymatrix@C=20pt@R=20pt{
L\ar[r]\ar@{=}[d]&M\ar[r]\ar[d]&N\ar[d]\ar@{-->}[r]&\\
L\ar[r]&U'\ar[r]\ar[d]&U_N\ar@{-->}[r]\ar[d]&\\
&V'\ar@{=}[r]\ar@{-->}[d]&V'\ar@{-->}[d]&\\
&&&}$$
Since $N\in\check{{_\omega\X}}$ and $V'\in\check{\omega}\subseteq\check{{_\omega\X}}$, $U_N\in\check{{_\omega\X}}$ by (1). Then  there is an $\E$-triangle $\xymatrix@C=0.5cm{U\ar[r]&V\ar[r]&U_{N}\ar@{-->}[r]&}$ with $U\in{_\omega{\X}}$ and $V\in\check{\omega}$ by Proposition \ref{equ}. Consider the following commutative diagram
\[\xymatrix@C=20pt@R=20pt{&U\ar@{=}[r]\ar[d]&U\ar[d]&\\
  L\ar[r]\ar@{=}[d]&W\ar[r]\ar[d]&V\ar@{-->}[r]\ar[d]&\\
 L\ar[r]&U'\ar[r]\ar@{-->}[d]&U_N\ar@{-->}[r]\ar@{-->}[d]&\\
 &&&
 }\]
 Since $U,U'\in{_\omega{\X}}$, $W\in{_\omega{\X}}$. By Proposition \ref{equ}, $L\in\check{{_\omega\X}}$.

 (3) Similar to (2).
\end{proof}

Let $M\in\check{\omega}$ (resp. $M\in\hat{\omega}$). We denote by
\begin{align*}
  \omega\mbox{-codim}M= & \min\left\{n\mid \mbox{there is an $\E$-triangle sequence }M\to W^0 \to W^1\to \cdots\to W^n \right.\\
  & \ \ \ \ \ \ \ \ \ \ \left.\mbox{ with each }W^i\in\omega\right\} \\
 \omega\mbox{-dim}M= &\min\left\{n\mid \mbox{there is an $\E$-triangle sequence }W_n\to \cdots\to W_1\to W_0 \to M \right.\\
 &\ \ \ \ \ \ \ \ \ \ \left.\mbox{ with each }W_i\in\omega\right\}.
\end{align*}
Similarly, if $\mathcal{C}\subseteq\check{\omega}$ (resp. $\mathcal{C}\subseteq\hat{\omega}$), we can define $\omega\mbox{-codim}\mathcal{C}$ (resp. $\omega\mbox{-dim}\mathcal{C}$) to be the smallest integer $n$ such that $\omega\mbox{-codim}M\leq n$ (resp. $\omega\mbox{-dim}M\leq n$) for all $M\in\mathcal{C}$.

The following lemma can be easily checked.

\begin{lemma}\label{3.6}
Let $\omega$ be a subcategory closed under extensions and cones of inflations. Assume that $\check{\omega}$ is closed under extensions, cones of inflations, and cocones of deflations.
Let  $\xymatrix@C=0.5cm{L\ar[r]&M\ar[r]&N\ar@{-->}[r]&}$ be an $\E$-triangle in $\check{\omega}$. Set $\omega\mbox{-}{\rm codim}L=l$, $\omega\mbox{-}{\rm codim}M=m$, $\omega\mbox{-}{\rm codim}N=n$.
\begin{itemize}
  \item [$(1)$] If $l=0$, then m=n.
  \item [$(2)$] If $m=0$ and $l>0$, then $l=n+1$.
  \item [$(3)$] If $n=0$ and $m>0$, then $l=m$.
  \item [$(4)$] If $l\leq \min\{m,n\}$, then $m=n$.
  \item [$(5)$] If $m<l$, then $l=n+1$.
  \item [$(6)$] If $n<m$, then $l=m$.
  \item [$(7)$] $m\leq\max\{l,n\}$.
  \item [$(8)$] $l\leq\max\{m,n\}+1$.
  \item [$(9)$] $n\leq\max\{l,m\}+1$.
\end{itemize}
\end{lemma}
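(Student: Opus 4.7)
My proposal is to prove the nine items in the order $(1)$, $(7)$, $(8)$, $(9)$, $(2)$, $(3)$, and then $(5)$, $(6)$, $(4)$ jointly. The uniform technical device is the $3\times 3$ square obtained from Lemma \ref{ile} and the octahedron axiom (ET4), typically by taking the pushout of one leg of the original $\E$-triangle against a chosen cosyzygy; this produces a new $\E$-triangle whose codimensions are shifted by one together with an auxiliary row whose leftmost term lies in $\omega$. Throughout the proof I repeatedly exploit the three closure properties of $\omega$ and $\check{\omega}$ stated in the hypotheses.

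For $(1)$, with $L\in\omega$, the inequality $m\leq n$ follows from a horseshoe construction using the trivial cosyzygy $L=L$ against $N\to W_N\to N^1$, producing $M\to L\oplus W_N\to M^1$ with $M^1\cong N^1$. Conversely, $n\leq m$ follows from the pushout of $L\to M$ with the cosyzygy $M\to W_M\to M^1$: the middle row $L\to W_M\to Z$ has $L,W_M\in\omega$, so $Z\in\omega$ by closure under cones of inflations, and the right column $N\to Z\to M^1$ splices with a coresolution of $M^1$ to give $N\in\check{\omega}_m$. For $(7)$, the same horseshoe with arbitrary cosyzygies of $L$ and $N$ inductively bounds $\omega\mbox{-codim}\,M\leq\max\{l,n\}$. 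For $(8)$, the cosyzygy pushout of $M\to W_M\to M^1$ with $M\to N$ produces $L\to W_M\to Z$ and $N\to Z\to M^1$; $(7)$ on the latter bounds $\omega\mbox{-codim}\,Z\leq\max\{n,m-1\}$, and splicing along $L\to W_M\to Z$ yields $l\leq\max\{m,n\}+1$. For $(9)$, combine (ET4) on $L\to M\to N$ and $M\to W_M\to M^1$ (yielding $\E$-triangles $L\to W_M\to E$ and $N\to E\to M^1$) with a second cosyzygy pushout on $L\to W_M\to E$; $(1)$ then forces $\omega\mbox{-codim}\,E=l-1$, and $(8)$ on $N\to E\to M^1$ gives $n\leq\max\{l,m\}+1$.

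Items $(2)$ and $(3)$ fall out quickly. For $(2)$ with $M\in\omega$ and $l>0$, the $3\times 3$ pushout from the cosyzygy of $L$ has middle row $W_L\to Y\to N$ and middle column $M\to Y\to L^1$, both with leftmost term in $\omega$, so $(1)$ applied to each gives $n=\omega\mbox{-codim}\,Y=l-1$. For $(3)$ with $N\in\omega$ and $m>0$, the same pushout has $W_L,N\in\omega$ on the middle row, hence $Y\in\omega$ by closure under extensions, and $(2)$ applied to the middle column $M\to Y\to L^1$ forces $m=l$.

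The main obstacle is $(5)$, $(6)$, $(4)$, which are intertwined via the cosyzygy pushout: it converts the hypothesis ``$m<l$'' of $(5)$ on the original triangle into the hypothesis ``$l-1<n$'' of $(6)$ on the new triangle $M\to Y\to L^1$, and vice versa. I plan to prove $(5)$ by induction on $l$ using only $(1)$, $(7)$, $(8)$, and its own inductive hypothesis; the base $l=0$ is vacuous. Inductively, the $3\times 3$ pushout produces $M\to Y\to L^1$ with codimensions $(m,n,l-1)$ and $\omega\mbox{-codim}\,Y=n$ by $(1)$. Applying $(7)$ to the new triangle gives $n\leq l-1$ (since $m<l$). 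The inductive $(5)$ applied to $M\to Y\to L^1$, whose leftmost codimension $m$ is strictly below $l$, would force $m=l$ in the case $n<m$, so we must have $n\geq m$; then $(8)$ on the original triangle gives $l\leq n+1$, combining with $n\leq l-1$ to yield $n=l-1$ and hence $l=n+1$. Then $(6)$ follows: given $n<m$, $(7)$ gives $m\leq l$, and if $m<l$ strictly, the just-proved $(5)$ at level $l$ forces $l=n+1$, so $m\leq l-1=n<m$, a contradiction, hence $m=l$. Finally $(4)$ drops out: the hypothesis $l\leq\min\{m,n\}$ ensures $l-1<n=\omega\mbox{-codim}\,Y$, so $(6)$ applied to $M\to Y\to L^1$ yields $m=n$.
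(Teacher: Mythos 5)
Your reduction scheme for items $(2)$--$(6)$, $(8)$, $(9)$ is sound: the cosyzygy pushout of $L$ (giving $W_L\to Y\to N$ and $M\to Y\to L^1$) and of $M$ (giving $L\to W_M\to Z$ and $N\to Z\to M^1$) are the right devices, the splicing arguments are correct, and the intertwined induction you set up for $(5)$, $(6)$, $(4)$ goes through granted $(1)$, $(7)$ and $(8)$. The problem is at the base. Both the inequality $m\leq n$ in $(1)$ and item $(7)$ are derived from a ``horseshoe construction'', and the horseshoe is not available under the hypotheses of this lemma. To produce the inflation $M\to W_L\oplus W_N$ (or $M\to L\oplus W_N$ in your trivial-cosyzygy version) one must extend the inflation $j\colon L\to W_L$ along $L\to M$, i.e.\ one needs $j_*\delta=0$, where $\delta\in\E(N,L)$ is the extension realized by $L\to M\to N$. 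Nothing guarantees this: here $\omega$ is only assumed closed under extensions and cones of inflations, it is \emph{not} assumed self-orthogonal, and $W_L$ is not relatively injective in $\check{\omega}$. In the trivial-cosyzygy case the required first component $M\to L$ is a retraction of $L\to M$, so your inflation $M\to L\oplus W_N$ with cone $N^1$ exists precisely when the $\E$-triangle $L\to M\to N$ splits.

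This is exactly the point where the paper's proof of $(1)$ takes a different, and necessary, route: the inequality $n\leq m$ comes from the (ET4)-pushout $L\to W_0\to E$ (with $E\in\omega$) and splicing, as in your argument for that direction, while $m\leq n$ is obtained by induction, applying Lemma \ref{ile} to the pair of $\E$-triangles $N\to E\to M_0$ and $N\to W_0'\to N_0'$ and extracting a contradiction; no lifting of $L\to W_L$ is ever required. Your item $(7)$ rests on the same horseshoe and fails for the same reason (after the first pushout, which is legitimate, you are left having to coresolve the extension $W_L\to Y\to N$ with $W_L\in\omega$, which is again the hard direction of $(1)$). Since every subsequent item invokes $(1)$ or $(7)$, the gap propagates through the whole proof. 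To repair it, replace the horseshoe steps by the double-diagram induction of the paper (or prove $(1)$ and $(7)$ by a simultaneous induction on the codimension of the third term, in the style of Auslander--Buchweitz); your ordering and reductions for the remaining items then go through.
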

\begin{proof}
We only prove (1), and the others are left to the reader. Assume $l=0$, i.e. $L\in\omega$.
It is easy to see that $m\geq n$.
We will use induction to prove the statement.
If $m=0$, then $n=0$ since $\omega$ is closed under cones of inflations; on the other hand, if $n=0$, then $m=0$ since $\omega$ is closed under extensions, i.e. we always have that $m=n$ if $m=0$ or $n=0$.
Suppose that the statement is true for any $i\leq m-1$.
Now we see the case $m$.
Let $\xymatrix@C=0.5cm{M\ar[r]&W_{0}\ar[r]&M_{0}\ar@{-->}[r]&}$ be an $\E$-triangle with $W_{0}\in\omega$ and $\omega\mbox{-}{\rm codim}M_{0}=m-1$, then we have the following commutative diagram:
$$\xymatrix@C=20pt@R=20pt{
L\ar[r]\ar@{=}[d]&M\ar[r]\ar[d]&N\ar[d]\ar@{-->}[r]&\\
L\ar[r]&W_{0}\ar[r]\ar[d]&E\ar@{-->}[r]\ar[d]&\\
&M_{0}\ar@{=}[r]\ar@{-->}[d]&M_{0}\ar@{-->}[d]&\\
&&&}$$
It is easy to see that $E\in\omega$ since $\omega$ is closed under cones of inflations.
We claim that $\omega\mbox{-}{\rm codim}N:=n=m$. Otherwise $n\leq m-1$ since $n\leq m$.
Then we have an $\E$-triangle $\xymatrix@C=0.5cm{N\ar[r]&W_{0}'\ar[r]&N_{0}'\ar@{-->}[r]&}$ with $W_{0}'\in\omega$ and $\omega\mbox{-}{\rm codim}N_{0}'\leq m-2$.
Using Lemma \ref{ile}(2), we have the following commutative diagram:
\[\xymatrix@C=20pt@R=20pt{N\ar[r]\ar[d]&E\ar[r]\ar[d]&M_{0}\ar@{-->}[r]\ar@{=}[d]&\\
 W_0'\ar[r]\ar[d]&F\ar[r]\ar[d]&M_{0}\ar@{-->}[r]&\\
 N_0'\ar@{=}[r]\ar@{-->}[d]&N_0'\ar@{-->}[d]&&\\
 &&&
 }\]
Using the induction hypothesis to the middle row, we have $\omega\mbox{-}{\rm codim}F=m-1$. Using the induction hypothesis to the middle column, we have
$\omega\mbox{-}{\rm codim}N_{0}'=\omega\mbox{-}{\rm codim}F=m-1$, which is a contradiction.
\end{proof}

\section{Tilting pairs of subcategories}

In this section, we begin with the definition of  tilting pairs of subcategories in an extriangulated category $\C$. Then we formulate the Bazzoni  characterization in this setting.

\subsection{$n$-tilting pairs}

Now we give the definition of tilting pairs of subcategories in an extriangulated category as follows.
\begin{definition}
A pair $(\mathcal{C},\mathcal{T})$ of subcategories is called a \emph{tilting pair} if
\begin{itemize}
  \item [$(1)$] $\mathcal{C}$ is self-orthogonal.
  \item [$(2)$] $\mathcal{T}$ is self-orthogonal.
  \item [$(3)$] $\mathcal{T}\subseteq \hat{\mathcal{C}}$.
  \item [$(4)$] $\mathcal{C}\subseteq \check{\mathcal{T}}$.
\end{itemize}

In this case, we say that $\mathcal{T}$ is a \emph{$\mathcal{C}$-tilting} subcategory, and $\mathcal{C}$ is a \emph{$\mathcal{T}$-cotilting} subcategory.
\end{definition}

If $(\mathcal{C},\mathcal{T})$ is a tilting pair such that $\mathcal{C}\mbox{-dim}\mathcal{T}\leq n$, then $(\mathcal{C},\mathcal{T})$ is said to be  an \emph{$n$-tilting pair}. Similarly, if $\mathcal{T}\mbox{-codim}\mathcal{C}\leq n$, we say that $(\mathcal{C},\mathcal{T})$ is an \emph{$n$-cotilting pair}.

\begin{example}\label{exampair}
\begin{itemize}
\item [(1)]Let $\C=\mbox{mod}A$, where $A$ is an Artin algebra and $\mbox{mod}A$ is the category of finitely generated left $A$-modules. Then tilting pairs defined in \cite{m2} and \cite{wx} are  examples of tilting pairs in an extriangulated category.
\item [(2)]Let $A$ be a finite dimensional $k$-algebra, and ${K^b({\rm proj}A)}$ be the bounded homotopy category of finite generated projective $A$-modules. Recall from \cite{BZ} that
a complex ${\bf P}=\{P^i\}$ is called a 2-term silting complex if
\begin{itemize}
  \item [(i)] $P^i=0$ for $i\neq -1,0$,
  \item [(ii)] $\Hom_{K^b({\rm proj}A)}(P,P[1])=0$,
  \item [(iii)] $\mbox{thick}{\bf P}={K^b({\rm proj}A)}$, where $\mbox{thick}{\bf P}$ is the smallest triangulated subcategory closed under direct summands containing ${\bf P}$.
\end{itemize}
By \cite[Theorem 1.1]{BZ},  any 2-term silting complex ${\bf P}$ is ${\rm proj}A$-tilting. In fact, for any 2-term silting complex ${\bf P}$, the pair
$({\rm add} A, {\rm add} {\bf P})$ is a $1$-tilting pair in ${K^b({\rm proj}A)}$.
\item [(3)] More generally, let $R$ be an associative ring with identity, and ${K^b({\rm proj}R)}$ be the bounded homotopy category of finite generated projective $R$-modules. If ${\bf T}$ is a silting complex (see \cite[Definition 2.1]{AI} for the definition of silting objects in triangulated categories) in ${K^b({\rm proj}R)}$ with $\inf\{i\in\mathbb{Z}\mid T_i\neq 0\}=l$ and $\sup\{i\in\mathbb{Z}\mid T_i\neq 0\}=k$, then the pair $({\rm add}R, {\rm add}{\bf T}[k])$ is a $(k-l)$-tilting pair by \cite[Corollary 4.8]{dlww}.
\item[(4)] Let $A$ be an Artin algebra, and $C,M\in\mbox{mod}A$. If $(C,M)$ is an $n$-tilting pair in $\mbox{mod}A$ in the sense of \cite{wx}, then $({\rm add}C, {\rm add}M)$ is an $n$-tilting pair in the bounded derived category $D^b(A)$.
\item [(5)] When $\C$ is an extriangulated category with enough projectives and injectives, Zhu and Zhuang \cite{ZZhuang} defined $n$-tilting subcategories $\mathcal T$ in $\C$. If in addition $\mathcal T=add(T)$, we call it an $n$-tilting object. Now we set $\mathcal{C}={\rm Proj}(\C)$ and $\mathcal T$ be an $n$-tilting subcategory in the sense of \cite[Definition 7]{ZZhuang}, then using Remark 4 in \cite{ZZhuang}, we have $(\mathcal{C},\mathcal T)$ is an $n$-tilting pair in $\C$.
\end{itemize}
\end{example}

In what follows,
we always assume that the following {\it weak idempotent completeness} (WIC for short)
given originally in \cite[Condition 5.8]{np} holds true on $\C$.

{\bf WIC Condition}: Let $f\in\C(A,B)$, $g\in\C(B,C)$.
\begin{itemize}
  \item  If $gf$ is an inflation, then so is $f$.
  \item  If $gf$ is a deflation, then so is $g$.
\end{itemize}

%In what follows, we always assume $\C$ satisfies Condition (WIC).

\begin{definition}{\rm (\cite[Definition 3.19]{zz})}
Let $\omega$ be a subcategory of $\C$. $\omega$ is called \emph{strongly contravariantly} (resp. \emph{strongly covariantly}) \emph{finite} if for any $C\in\C$, there is an $\E$-triangle $\xymatrix@C=0.5cm{K\ar[r]&W\ar[r]^g&C\ar@{-->}[r]&}$ (resp. $\xymatrix@C=0.5cm{C\ar[r]^g&W\ar[r]&V\ar@{-->}[r]&}$), where $g$ is a right (resp. left) $\omega$-approximation of $C$.
\end{definition}

From now on we assume that $\mathcal C$ is self-orthogonal.

\begin{lemma}\label{4.2}
Let $\mathcal{C}$ be self-orthogonal and $\omega\subseteq{_\mathcal{C}\X}$. If $\omega$ is a strongly contravariantly finite self-orthogonal subcategory, and $\mathcal{C}\subseteq\check{\omega}$, then $\omega^\bot\subseteq\mathcal{C}^\bot$ and $\omega^\bot\cap{_\mathcal{C}\X}={_\omega\X}$.

In particular, if $\mathcal{T}$ is a strongly contravariantly finite $\mathcal{C}$-tilting subcategory, then $\mathcal{T}^\bot\subseteq\mathcal{C}^\bot$ and $\mathcal{T}^\bot\cap{_\mathcal{C}\X}={_\mathcal{T}\X}$.
\end{lemma}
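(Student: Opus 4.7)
The proof splits into three parts: the containment $\omega^\bot \subseteq \mathcal{C}^\bot$, the ``$\supseteq$'' direction of the equality, and the ``$\subseteq$'' direction. For $\omega^\bot \subseteq \mathcal{C}^\bot$, I would fix $Y \in \omega^\bot$ and $C \in \mathcal{C}$, and use $\mathcal{C} \subseteq \check{\omega}$ to pick an $\E$-triangle sequence $C \to W^0 \to \cdots \to W^n$ with each $W^j \in \omega$. Decomposing it into the constituent $\E$-triangles $K^j \to W^j \to K^{j+1}$ with $K^0 = C$, Lemma \ref{long} gives $\E^i(K^j, Y) \cong \E^{i+1}(K^{j+1}, Y)$ for all $i \geq 1$, since $\E^i(W^j, Y) = 0$. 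A finite iteration terminates at a syzygy lying in $\omega$, yielding $\E^i(C, Y) = 0$ for all $i \geq 1$, hence $Y \in \mathcal{C}^\bot$.

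The ``$\supseteq$'' direction of the equality rests on strong contravariant finiteness. Given $M \in \omega^\bot \cap {_\mathcal{C}\X}$, I would take a right $\omega$-approximation to produce an $\E$-triangle $K_1 \to W_0 \to M \dashrightarrow$ with $W_0 \in \omega$. The orthogonality $K_1 \in \omega^\bot$ comes from the long exact sequence of $\Hom(W', -)$ applied to this triangle for $W' \in \omega$: the approximation property gives surjectivity of $\Hom(W', W_0) \to \Hom(W', M)$, self-orthogonality gives $\E^i(W', W_0) = 0$ for $i \geq 1$, and $M \in \omega^\bot$ gives $\E^i(W', M) = 0$ for $i \geq 1$. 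Membership $K_1 \in {_\mathcal{C}\X}$ then follows from Lemma \ref{ort}(4) applied to the same triangle: $W_0 \in \omega \subseteq {_\mathcal{C}\X}$, $M \in {_\mathcal{C}\X}$, and $K_1 \in \omega^\bot \subseteq \mathcal{C}^\bot \subseteq \mathcal{C}^{\bot_1}$ by the first part. Iterating produces the $\omega$-resolution witnessing $M \in {_\omega\X}$.

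For the reverse inclusion ${_\omega\X} \subseteq \omega^\bot \cap {_\mathcal{C}\X}$, the containment ${_\omega\X}\subseteq \omega^\bot$ is already recorded in the paper. For ${_\omega\X} \subseteq {_\mathcal{C}\X}$, I would take $M \in {_\omega\X}$ with $\omega$-resolution $\cdots \to W_1 \to W_0 \to M$ and splice it together with the $\mathcal{C}$-resolutions of each $W_i \in \omega \subseteq {_\mathcal{C}\X}$ by iterated octahedra. Concretely, (ET4)$\op$ is applied to compositions of the form $C_{i,0} \to W_i \to K_{i-1}$, where $C_{i,0} \to W_i$ is the first step of each $W_i$'s $\mathcal{C}$-resolution; the cocones of these compositions land in $\mathcal{C}^\bot$ since they arise as extensions of one syzygy from a $\mathcal{C}$-resolution (in $\mathcal{C}^\bot$) and one from the $\omega$-resolution (in $\omega^\bot \subseteq \mathcal{C}^\bot$ by the first part). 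The main obstacle will be the bookkeeping for this horseshoe-style construction: carefully extracting a genuine $\mathcal{C}$-resolution of $M$ from the resulting bicomplex via repeated use of (ET4) and (ET4)$\op$. Finally, the in-particular statement is immediate from the general case with $\omega = \mathcal{T}$, since the tilting-pair axioms guarantee $\mathcal{T}$ self-orthogonal, $\mathcal{T} \subseteq \hat{\mathcal{C}} \subseteq {_\mathcal{C}\X}$ (using the standard inclusion $\hat{\omega} \subseteq {_\omega\X}$), and $\mathcal{C} \subseteq \check{\mathcal{T}}$.
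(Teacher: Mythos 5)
Your proposal is correct and follows essentially the same route as the paper: dimension shifting along $\mathcal{C}\subseteq\check{\omega}$ for $\omega^\bot\subseteq\mathcal{C}^\bot$, iterated right $\omega$-approximations (whose cocones stay in $\omega^\bot\cap{_\mathcal{C}\X}$ by the long exact sequence and Lemma \ref{ort}(4)) for one inclusion, and a splicing of the $\omega$-resolution with $\mathcal{C}$-resolutions of its terms for the other. The bookkeeping you defer in the last step is exactly what the paper's Lemma \ref{ch} packages, so it can be cited off the shelf; the only other difference is that you skip the paper's verification that $M\in\mathrm{Gen}\,\omega$, which is indeed redundant given that its definition of strong contravariant finiteness already produces the approximation as a deflation.
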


\begin{proof}
Since $\mathcal{C}\subseteq\check{\omega}$, let $M\in\omega^\bot$, then by Lemma \ref{ort}(1) $\E^i(C,M)=0$ for any $C\in\mathcal{C}$ and any $i\geq 1$. Thus $\omega^\bot\subseteq\mathcal{C}^\bot$.

$\omega^\bot\cap{_\mathcal{C}\X}\subseteq{_\omega\X}$:  Denote by
\[\mbox{Gen}\omega=\{M\mid\mbox{there is a deflation }W\to M\mbox{ with }W\in\omega\}.\]
Let $M\in\omega^\bot\cap{_\mathcal{C}\X}$. We claim $M\in\mbox{Gen}\omega$. In fact, choose an $\E$-triangle
$\xymatrix@C=0.5cm{M_1\ar[r]&C_0\ar[r]&M\ar@{-->}[r]&}$ with $C_0\in\mathcal{C}$ and $M_1\in{_\mathcal{C}\X}$. Since $C_0\in\check{\omega}$, there is an $\E$-triangle
$\xymatrix@C=0.5cm{C_0\ar[r]&T'\ar[r]&X\ar@{-->}[r]&}$ with $T'\in\omega$ and $X\in\check{\omega}$. Consider the following commutative diagram
$$\xymatrix@C=20pt@R=20pt{
M_1\ar[r]\ar@{=}[d]&C_0\ar[r]\ar[d]&M\ar[d]\ar@{-->}[r]&\\
M_1\ar[r]&T'\ar[r]\ar[d]&Y\ar@{-->}[r]\ar[d]&\\
&X\ar@{=}[r]\ar@{-->}[d]&X\ar@{-->}[d]&\\
&&&}$$
By Lemma \ref{ort}(1) $\E(X,M)=0$, and hence $M\oplus X\cong Y\in\mbox{Gen}\omega$.
Consider the following commutative diagram
$$\xymatrix@C=20pt@R=20pt{
M_1\ar[r]\ar@{=}[d]&X'\ar[r]\ar[d]&X\ar[d]^{1\choose 0}\ar@{-->}[r]&\\
M_1\ar[r]&T'\ar[r]\ar[d]&Y\ar@{-->}[r]\ar[d]^{(0 \ 1)}&\\
&M\ar@{=}[r]\ar@{-->}[d]&M\ar@{-->}[d]&\\
&&&}$$
Then we know that $M\in\mbox{Gen}\omega$. Consequently, we may take an $\E$-triangle
$\xymatrix@C=0.5cm{M'\ar[r]&T_M\ar[r]&M\ar@{-->}[r]&}$ with $M'\in\omega^\bot$ and $T_M\in\omega$ since $\omega$ is strongly contarvariantly finite. As $\omega^\bot\subseteq\mathcal{C}^\bot$ and $\omega\subseteq{_\mathcal{C}\X}$, we get $M'\in\omega^\bot\cap{_\mathcal{C}\X}$ by Lemma \ref{ort}. Now repeating the process to $M'$, and so on, we get $M\in{_\omega\X}$.

${_\omega\X}\subseteq\omega^\bot\cap{_\mathcal{C}\X}$: Since ${_\omega\X}\subseteq\omega^\bot$, it suffices to show that ${_\omega\X}\subseteq{_\mathcal{C}\X}$. Let $M\in{_\omega\X}$. Choose an $\E$-triangle sequence $\xymatrix@C=0.5cm{\cdots\ar[r]&W_1\ar[r]^{d_1}&W_0\ar[r]^{d_0}&M\ar@{-->}[r]&}$ as in (\ref{E}) with $T_i\in \omega$ and  $K_i\in{_\omega\X}$. Applying Lemma \ref{ch} to the $\E$-triangle $\xymatrix@C=0.5cm{K_1\ar[r]&W_0\ar[r]&M\ar@{-->}[r]&}$, we obtain two $\E$-triangles $$\xymatrix@C=0.5cm{H_0\ar[r]&M_0\ar[r]&K_1\ar@{-->}[r]&}$$ and $$\xymatrix@C=0.5cm{M_0\ar[r]&C_0\ar[r]&M\ar@{-->}[r]&}$$ with $H_0\in{_\mathcal{C}\X}$ and $C_0\in\mathcal{C}$. Since $K_1\in\omega^\bot\subseteq\mathcal{C}^\bot$ and $H_0\in\mathcal{C}^\bot$, we have $M_0\in\mathcal{C}^\bot$. Consider the following commutative diagram
\[\xymatrix@C=20pt@R=20pt{&K_2\ar@{=}[r]\ar[d]&K_2\ar[d]&\\
  H_0\ar[r]\ar@{=}[d]&Y\ar[r]\ar[d]&T_1\ar@{-->}[r]\ar[d]&\\
 H_0\ar[r]&M_0\ar[r]\ar@{-->}[d]&K_1\ar@{-->}[r]\ar@{-->}[d]&\\
 &&&
 }\]
 Since $T_1,H_0\in{_\mathcal{C}\X}$, we have $Y\in{_\mathcal{C}\X}$. Choose an $\E$-triangle $\xymatrix@C=0.5cm{H_1\ar[r]&C_1\ar[r]&Y\ar@{-->}[r]&}$ with $H_1\in{_\mathcal{C}\X}$ and $C_1\in\mathcal{C}$. Consider the following commutative diagram
 $$\xymatrix@C=20pt@R=20pt{H_1\ar@{=}[r]\ar[d]&H_1\ar[d]&\\
M_1\ar[r]\ar[d]&C_1\ar[d]\ar[r]&M_0\ar@{=}[d]\ar@{-->}[r]&\\
K_2\ar[r]\ar@{-->}[d]&Y\ar[r]\ar@{-->}[d]&M_0\ar@{-->}[r]&\\
&&&}$$
Since $H_1\in\mathcal{C}^\bot$ and $K_2\in \omega^\bot\subseteq\mathcal{C}^\bot$, we have $M_1\in\mathcal{C}^\bot$. By repeating the process to $M_1$, and so on, we can obtain an $\E$-triangle sequence
$\xymatrix@C=0.5cm{\cdots\ar[r]&C_1\ar[r]^{g_1}&C_0\ar[r]^{g_0}&M\ar@{-->}[r]&}$ with $C_i\in \mathcal{C}$ and $M_i\in{\mathcal{C}^\bot}$. Therefore, $M\in{_{\mathcal{C}}\X}$.
\end{proof}

\subsection{Bazzoni characterization of $n$-tilting pairs}

In what follows, we will give a Bazzoni characterization of $n$-tilting pairs. Before doing it, we first need to introduce two kinds of subcategories as follows.

Let $\mathcal T_{1}, \mathcal T_{2}$ are subcategories of $\C$. We define ${\rm Pres}^{n}_{\mathcal T_{2}}(\mathcal T_{1})$ (${\rm Copres}^{n}_{\mathcal T_{2}}(\mathcal T_{1})$, resp.) to be the subcategory consisting of each object $X\in\C$ such that there is an $\E$-triangle sequence:
$$X'\rightarrow T_{n}\rightarrow T_{n-1}\cdots\rightarrow T_{2}\rightarrow T_{1}\rightarrow X$$
$$(X\rightarrow T_{1}\rightarrow T_{2}\cdots\rightarrow T_{n-1}\rightarrow T_{n}\rightarrow X', \mbox{resp}.)$$
in $\C$ with $T_{i}\in\mathcal T_{1}, X'\in\mathcal T_{2}.$ It is obvious that $\mathcal{T}_1\subseteq {\rm Pres}^{n}_{\mathcal T_{2}}(\mathcal T_{1})$ ($\mathcal {T}_1\subseteq {\rm Copres}^{n}_{\mathcal T_{2}}(\mathcal T_{1})$, resp.). If $\mathcal T_{2}=\C$, we write ${\rm Pres}^{n}_{\mathcal T_{2}}(\mathcal T_{1})$ (${\rm Copres}^{n}_{\mathcal T_{2}}(\mathcal T_{1})$, resp.) as ${\rm Pres}^{n}(\mathcal T_{1})$ (${\rm Copres}^{n}(\mathcal T_{1})$, resp.).

Using this notion, we can give an equivalent description for ${_\mathcal{T}\X}$, that is,

\begin{lemma}\label{1}
Assume that $(\mathcal{C},\mathcal{T})$ is an $n$-tilting pair with $\mathcal{T}$ strongly contravariantly finite. The following are equivalent for an object $M\in\C$.  %$\mathcal{T}^\bot\cap{_\mathcal{C}\X}={_\mathcal{T}\X}$
\begin{itemize}
  \item [$(1)$] $M\in{_\mathcal{T}\X}$.
  \item [$(2)$] $M\in\mathcal{T}^\bot\cap{_\mathcal{C}\X}$.
  \item [$(3)$] $M\in {\rm Pres}^{n}_{_\mathcal{C}\X}(\mathcal T)$.
\end{itemize}
\end{lemma}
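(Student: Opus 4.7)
My plan is to use Lemma \ref{4.2} as a hammer, since under the present hypotheses it already delivers the equality ${_\mathcal{T}\X} = \mathcal{T}^\bot \cap {_\mathcal{C}\X}$; this handles (1) $\Leftrightarrow$ (2) outright. It then remains to close the loop (1) $\Rightarrow$ (3) $\Rightarrow$ (2). The role of the $n$-tilting assumption is to convert the $\mathcal{C}$-dimension bound on $\mathcal{T}$ into the vanishing of high Ext-groups against objects of ${_\mathcal{C}\X}$.

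For (1) $\Rightarrow$ (3), I take a defining $\E$-triangle sequence $\cdots \to T_1 \to T_0 \to M$ witnessing $M \in {_\mathcal{T}\X}$, note that the $n$-th cosyzygy $K_n$ is itself in ${_\mathcal{T}\X}$ (just by shifting the same sequence), and therefore in ${_\mathcal{C}\X}$ by Lemma \ref{4.2}. Truncating at step $n$ yields $K_n \to T_{n-1} \to \cdots \to T_0 \to M$, which is exactly a witness for $M \in {\rm Pres}^n_{{_\mathcal{C}\X}}(\mathcal{T})$.

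For (3) $\Rightarrow$ (2), write the given presentation as $X' \to T_n \to \cdots \to T_1 \to M$ with $T_i \in \mathcal{T}$ and $X' \in {_\mathcal{C}\X}$, and let $M_i$ denote the $i$-th cosyzygy, so that $M_0 = M$, $M_n = X'$, and there are $\E$-triangles $M_{i+1} \to T_{i+1} \to M_i$. To see $M \in {_\mathcal{C}\X}$, I combine the inclusion $\mathcal{T} \subseteq \hat{\mathcal{C}} \subseteq {_\mathcal{C}\X}$ with the closure of ${_\mathcal{C}\X}$ under cones of inflations from Lemma \ref{ort}(3); inductively, $M_{n-1}, M_{n-2}, \ldots, M_0 = M$ all lie in ${_\mathcal{C}\X}$. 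To see $M \in \mathcal{T}^\bot$, fix $T \in \mathcal{T}$ and $j \geq 1$ and run two dimension shifts using Lemma \ref{long}. Self-orthogonality of $\mathcal{T}$ kills $\E^k(T, T_i)$ for $k \geq 1$, so the $n$ long exact sequences along the presentation give $\E^j(T, M) \cong \E^{j+n}(T, X')$. The $n$-tilting condition supplies a finite resolution $C_n \to \cdots \to C_0 \to T$ with $C_i \in \mathcal{C}$, and a dual dimension shift using $X' \in {_\mathcal{C}\X} \subseteq \mathcal{C}^\bot$ yields $\E^{j+n}(T, X') \cong \E^j(C_n, X')$, which vanishes because $C_n \in \mathcal{C}$.

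The only delicate point is matching the two occurrences of $n$: the length of the $\mathcal{T}$-presentation and the $\mathcal{C}$-dimension of $\mathcal{T}$ must together produce a total shift of $2n$, so that $\E^1(T, M)$ ends up in the range where ${_\mathcal{C}\X} \subseteq \mathcal{C}^\bot$ forces vanishing. This alignment is precisely what the definition of an $n$-tilting pair is designed to enforce, so no additional estimates are required.
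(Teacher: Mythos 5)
Your proposal is correct and follows essentially the same route as the paper: the equivalence $(1)\Leftrightarrow(2)$ is quoted from Lemma \ref{4.2}, and your $(3)\Rightarrow(2)$ argument (closure of ${_\mathcal{C}\X}$ under cones of inflations for membership in ${_\mathcal{C}\X}$, then the two dimension shifts $\E^j(\mathcal{T},M)\cong\E^{j+n}(\mathcal{T},X')\cong\E^j(C_n,X')=0$ using self-orthogonality of $\mathcal{T}$, the $\mathcal{C}$-resolution of length $n$, and $X'\in{_\mathcal{C}\X}\subseteq\mathcal{C}^\bot$) is exactly the paper's. The only cosmetic difference is that you close the cycle via $(1)\Rightarrow(3)$ by truncating the defining proper $\mathcal{T}$-resolution of $M\in{_\mathcal{T}\X}$ and placing its $n$-th cocone in ${_\mathcal{C}\X}$ via Lemma \ref{4.2}, whereas the paper proves $(2)\Rightarrow(3)$ by iterating the right $\mathcal{T}$-approximation construction; since $(1)$ and $(2)$ are already identified, these are interchangeable.
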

\begin{proof}
The equivalence of $(1)$ and $(2)$ has been proved in Lemma \ref{4.2}. Now to show that $(2)$ and $(3)$ are equivalent.

Let $M\in\mathcal{T}^\bot\cap{_\mathcal{C}\X}$. Using the same argument as that in Lemma \ref{4.2}, we have an $\E$-triangle $\xymatrix@C=0.5cm{M'\ar[r]&T_M\ar[r]&M\ar@{-->}[r]&}$ with $M'\in\omega^\bot$ and $T_M\in\omega$. Since $\mathcal{T}^\bot\subseteq\mathcal{C}^\bot$ and $\mathcal T\subseteq\mathcal{T}^\bot\cap{_\mathcal{C}\X}$, Using Lemma \ref{ort}(4), one can see that $M'\in\mathcal{T}^\bot\cap{_\mathcal{C}\X}$. Now replacing $M$ with $M'$ and repeating the above process, we have $M\in {\rm Pres}^{n}_{_\mathcal{C}\X}(\mathcal T)$.

Let $M\in {\rm Pres}^{n}_{_\mathcal{C}\X}(\mathcal T)$. There is an $\E$-triangle sequence $N\rightarrow T_{n}\rightarrow T_{n-1}\cdots\rightarrow T_{2}\rightarrow T_{1}\rightarrow M$ with $N\in {_\mathcal{C}\X}$ and $T_{i}\in\mathcal T$. Using Lemma \ref{ort}(3), we have $M\in{_\mathcal{C}\X}$. Now to show $M\in\mathcal{T}^\bot$. As $\mathcal T$ is self-orthogonal, one can see that $\E^{i}(\mathcal T,M)=\E^{i+n}(\mathcal T,N)$ for all $i\geq1$. Since $(\mathcal{C},\mathcal{T})$ is an $n$-tilting pair, for any $T\in\mathcal T$, there is an $\E$-triangle sequence $C_{n}\rightarrow C_{n-1}\cdots\rightarrow C_{1}\rightarrow C_{0}\rightarrow T$. Then we have $\E^{i+n}(T,X)=\E^{i}(C_{n},N)=0$ for all $i\geq1$ since $N\in {_\mathcal{C}\X}\subseteq\mathcal{C}^\bot$. Therefore $\E^{i}(\mathcal T,M)=0$ for all $i\geq1$, that is, $M\in\mathcal{T}^\bot$.
\end{proof}

Clearly, if
${\rm Pres}^{n}_{_\mathcal{C}\X}(\mathcal T)=\mathcal{T}^\bot\cap{_\mathcal{C}\X}$, then $\mathcal T\subseteq\mathcal{T}^\bot\cap{_\mathcal{C}\X}$, in particular, $\mathcal T$ is a self-othogonal subcategory.

%\begin{proof}
%Since $\mathcal T\subseteq {\rm Pres}^{n}_{_\mathcal{C}\X}(\mathcal T)$, it is obvious that the result holds.
%\end{proof}

\begin{proposition}\label{3}
If $\mathcal T$ is a strongly contravariantly finite subcategory in $\C$ and ${\rm Pres}^{n}_{_\mathcal{C}\X}(\mathcal T)=\mathcal{T}^\bot\cap{_\mathcal{C}\X}$, then ${\rm Pres}^{n}_{_\mathcal{C}\X}(\mathcal T)={\rm Pres}^{n}_{_\mathcal{C}\X}({\rm Pres}^{n}_{_\mathcal{C}\X}(\mathcal T))$.
\end{proposition}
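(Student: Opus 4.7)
The plan is to set $\mathcal{X} := {\rm Pres}^{n}_{_\mathcal{C}\X}(\mathcal T)$ and show $\mathcal{X} = {\rm Pres}^{n}_{_\mathcal{C}\X}(\mathcal{X})$. The inclusion $\mathcal{X} \subseteq {\rm Pres}^{n}_{_\mathcal{C}\X}(\mathcal{X})$ is immediate from $\mathcal{T} \subseteq \mathcal{X}$, since any $\mathcal{T}$-presentation is in particular an $\mathcal{X}$-presentation. The real content is the reverse inclusion, and I would deduce it from Lemma \ref{ch}(1) applied with $\omega = \mathcal{T}$ in its ``$\Y$-variant'' with $\Y = \mathcal{X}$, so the heart of the argument is verifying the Lemma's hypothesis: for every $Y \in \mathcal{X}$, there exists an $\E$-triangle $\xymatrix@C=0.5cm{Y'\ar[r]&T\ar[r]&Y\ar@{-->}[r]&}$ with $T\in\mathcal T$ and $Y' \in \mathcal{X}$.

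To verify this hypothesis, I would first use strong contravariant finiteness of $\mathcal{T}$ to pick a right $\mathcal{T}$-approximation $\xymatrix@C=0.5cm{K\ar[r]&T\ar[r]&Y\ar@{-->}[r]&}$ of $Y$; the standard approximation/dimension-shift argument using self-orthogonality of $\mathcal{T}$ gives $K \in \mathcal{T}^\bot$. For $K \in {_\mathcal{C}\X}$, I would invoke the given $\mathcal{T}$-presentation of $Y$ to produce its first step $\xymatrix@C=0.5cm{L\ar[r]&T'\ar[r]&Y\ar@{-->}[r]&}$, noting that $L \in {_\mathcal{C}\X}$ follows by iterating Lemma \ref{ort}(3) downward through the remaining $(n-1)$ steps. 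Applying Lemma \ref{ile}(1) to the two $\E$-triangles that both end at $Y$ yields an object $E$ fitting into $\E$-triangles $\xymatrix@C=0.5cm{L\ar[r]&E\ar[r]&T\ar@{-->}[r]&}$ and $\xymatrix@C=0.5cm{K\ar[r]&E\ar[r]&T'\ar@{-->}[r]&}$. The first one shows $E\in{_\mathcal{C}\X}$ by extension-closure (Lemma \ref{ort}(3)); applying $\C(C,-)$ to it shows $\C(C,E)\to\C(C,T')$ is surjective since $\E(C,L)=0$, hence the long exact sequence of the second triangle forces $\E(C,K)=0$. Combining with the higher-degree vanishing, $K\in\mathcal{C}^{\bot_1}$, and Lemma \ref{ort}(4) then upgrades this to $K\in{_\mathcal{C}\X}$. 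Thus $K \in \mathcal{T}^\bot \cap {_\mathcal{C}\X} = \mathcal{X}$, as needed.

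With the hypothesis in hand, given $M \in {\rm Pres}^{n}_{_\mathcal{C}\X}(\mathcal{X})$ with defining sequence $N \to X_n \to \cdots \to X_1 \to M$ ($X_i \in \mathcal{X}$, $N \in {_\mathcal{C}\X}$), apply Lemma \ref{ch}(1) with $X=N$, $Z=M$, $m=n$. This produces an $\E$-triangle $\xymatrix@C=0.5cm{U\ar[r]&V\ar[r]&N\ar@{-->}[r]&}$ with $U\in\mathcal{X}$, together with an $\E$-triangle sequence $V \to M_n \to \cdots \to M_1 \to M$ with each $M_i \in \mathcal{T}$. Since $U \in \mathcal{X} \subseteq {_\mathcal{C}\X}$ and $N \in {_\mathcal{C}\X}$, Lemma \ref{ort}(3) applied to $U\to V\to N$ gives $V\in{_\mathcal{C}\X}$. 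This second sequence exactly witnesses $M \in {\rm Pres}^{n}_{_\mathcal{C}\X}(\mathcal{T}) = \mathcal{X}$, concluding the proof.

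The main obstacle will be the hypothesis-verification step, and specifically transferring the ${_\mathcal{C}\X}$-property to $K$: a right $\mathcal{T}$-approximation is by definition only $\mathcal{T}$-lifting, not $\mathcal{C}$-lifting, so $\E(C,K)=0$ does not come for free from the approximation property alone. The key is using the pre-existing $\mathcal{T}$-presentation of $Y$ (which does give a first syzygy $L\in{_\mathcal{C}\X}$) and then Lemma \ref{ile}(1) as an extriangulated analogue of the octahedron to compare the two deflations to $Y$; this is what converts $\mathcal{C}^\bot$-information about $L$ into the needed surjectivity statement.
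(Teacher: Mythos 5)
Your overall strategy coincides with the paper's: both arguments reduce the statement to verifying the hypothesis of the $\Y$-variant of Lemma \ref{ch} for $\Y={\rm Pres}^{n}_{_\mathcal{C}\X}(\mathcal T)$ (equivalently, to showing ${\rm Pres}^{n}_{_\mathcal{C}\X}(\mathcal T)={\rm Pres}^{n+1}_{_\mathcal{C}\X}(\mathcal T)$) by comparing a right $\mathcal T$-approximation triangle $K\to T\to Y$ with the first step $L\to T'\to Y$ of the given presentation via the object $E$ produced by Lemma \ref{ile}(1), and then conclude by applying Lemma \ref{ch}(1) together with extension-closure of ${_\mathcal{C}\X}$. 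That last application is exactly as in the paper.

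However, the step where you transfer the ${_\mathcal{C}\X}$-property to $K$ has a gap. Applying $\C(C,-)$ to the triangle $L\to E\to T$ yields surjectivity of $\C(C,E)\to\C(C,T)$, not of $\C(C,E)\to\C(C,T')$: the object $T'$ does not occur in that triangle. Without surjectivity onto $\C(C,T')$, the long exact sequence of $K\to E\to T'$ only identifies $\E(C,K)$ with the cokernel of $\C(C,E)\to\C(C,T')$, which is not controlled by what you have established. (That cokernel does vanish, but proving it requires the approximation property of $T\to Y$ --- namely that $T'\to Y$ factors through it, so that every map $C\to Y$, all of which factor through $T'$ because $\E(C,L)=0$, also factors through $T$ --- and your argument does not invoke this at that point.) The repair is immediate and is in effect what the paper does: you have already shown $K\in\mathcal T^{\bot}$, and $T'\in\mathcal T$, so $\E(T',K)=0$ and the triangle $K\to E\to T'$ splits; hence $K$ is a direct summand of $E\in{_\mathcal{C}\X}$ and therefore $K\in{_\mathcal{C}\X}$ by closure under direct summands (Lemma \ref{ort}(3)), with no need for Lemma \ref{ort}(4) or for computing $\E(C,K)$. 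With this correction the rest of your proof goes through and matches the paper's.
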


\begin{proof}
Firstly, we show that ${\rm Pres}^{n}_{_\mathcal{C}\X}(\mathcal T)={\rm Pres}^{n+1}_{_\mathcal{C}\X}(\mathcal T)$. We just need to show that ${\rm Pres}^{n}_{_\mathcal{C}\X}(\mathcal T)\subseteq {\rm Pres}^{n+1}_{_\mathcal{C}\X}(\mathcal T)$. Indeed, for any $M\in {\rm Pres}^{n}_{_\mathcal{C}\X}(\mathcal T)$, there is an $\E$-triangle $\xymatrix@C=0.5cm{N\ar[r]&T_{1}\ar[r]&M\ar@{-->}[r]&}$ with $N\in {\rm Pres}^{n-1}_{_\mathcal{C}\X}(\mathcal T)$ and $T_{1}\in\mathcal T$. Using Lemma  \ref{ort}(3), we have $N\in{_\mathcal{C}\X}$. Since $M\in\mathcal{T}^\bot\cap {\rm Gen}(\mathcal T)$ and $\mathcal T$ is self-orthogonal, we have an $\E$-triangle $\xymatrix@C=0.5cm{L\ar[r]&T_{M}\ar[r]&M\ar@{-->}[r]&}$ with $L\in\mathcal{T}^\bot$ and $T_{M}\in\mathcal T$. Thus we have the following commutative diagram:
$$\xymatrix@C=20pt@R=20pt{&N\ar@{=}[r]\ar[d]&N\ar[d]&\\
L\ar[r]\ar@{=}[d]&Y\ar[r]\ar[d]&T_{1}\ar[d]\ar@{-->}[r]&\\
L\ar[r]&T_{M}\ar[r]\ar@{-->}[d]&M\ar@{-->}[r]\ar@{-->}[d]&\\
&&}$$
Because $T_{M}\in {_\mathcal{C}\X}$, $N\in {_\mathcal{C}\X}$ and $_\mathcal{C}\X$ is closed under extensions, we have $Y\in {_\mathcal{C}\X}$. Since $L\in \mathcal{T}^\bot$, one can see that $Y\cong L\oplus T_{1}$. Therefore $L\in \mathcal{T}^\bot\cap{_\mathcal{C}\X}={\rm Pres}^{n}_{_\mathcal{C}\X}(\mathcal T)$, since $ _\mathcal{C}\X$ is closed under direct summands. From the argument above, one can see that $M\in {\rm Pres}^{n+1}_{_\mathcal{C}\X}(\mathcal T)$.

Now we will show that ${\rm Pres}^{n}_{_\mathcal{C}\X}(\mathcal T)={\rm Pres}^{n}_{_\mathcal{C}\X}({\rm Pres}^{n}_{_\mathcal{C}\X}(\mathcal T))$. It is enough to show that $${\rm Pres}^{n}_{_\mathcal{C}\X}({\rm Pres}^{n}_{_\mathcal{C}\X}(\mathcal T))\subseteq {\rm Pres}^{n}_{_\mathcal{C}\X}(\mathcal T).$$ Let $M\in {\rm Pres}^{n}_{_\mathcal{C}\X}({\rm Pres}^{n}_{_\mathcal{C}\X}(\mathcal T))$. Then there is an $\E$-triangle sequence $X\rightarrow P_{n}\rightarrow P_{n-1}\cdots\rightarrow P_{2}\rightarrow P_{1}\rightarrow M$ with $P_{i}\in {\rm Pres}^{n}_{_\mathcal{C}\X}(\mathcal T)$ and $X\in {_\mathcal{C}\X}$. Let $\Y={\rm Pres}^{n}_{_\mathcal{C}\X}(\mathcal T)$, using Lemma \ref{ch} and ${\rm Pres}^{n}_{_\mathcal{C}\X}(\mathcal T)={\rm Pres}^{n+1}_{_\mathcal{C}\X}(\mathcal T)$, we have an
$\E$-triangle $\xymatrix@C=0.5cm{U\ar[r]&V\ar[r]&X\ar@{-->}[r]&}$ such that $U\in {\rm Pres}^{n}_{_\mathcal{C}\X}(\mathcal T)$ and there is an $\E$-triangle sequence $V\rightarrow T_{n}\rightarrow T_{n-1}\cdots\rightarrow T_{2}\rightarrow T_{1}\rightarrow M$ with $T_{i}\in\mathcal T$. We have $V\in {_\mathcal{C}\X}$ since $U,X\in {_\mathcal{C}\X}$ and ${_\mathcal{C}\X}$ is closed under extensions. Therefore, $M\in {\rm Pres}^{n}_{_\mathcal{C}\X}(\mathcal T)$.
\end{proof}

\begin{proposition}\label{4}
Assume that $\mathcal T$ is a strongly contravariantly finite subcategory in $\C$ and ${\rm Pres}^{n}_{_\mathcal{C}\X}(\mathcal T)=\mathcal{T}^\bot\cap{_\mathcal{C}\X}$. If $\mathcal C\subseteq {\rm Copres}^{n}({\rm Pres}^{n}_{_\mathcal{C}\X}(\mathcal T))$, then $\mathcal C\subseteq\check{\mathcal{T}_{n}}$.
\end{proposition}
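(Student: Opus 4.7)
My plan is to derive the $\mathcal{T}$-coresolution of $C$ by combining Lemma~\ref{ch}(2) with an iterated right-approximation plus $3{\times}3$ diagram construction. Given $C\in\mathcal{C}$, the hypothesis supplies an $\E$-triangle sequence $C\to P_1\to P_2\to\cdots\to P_n\to C'$ with each $P_i\in{\rm Pres}^n_{_\mathcal{C}\X}(\mathcal{T})=\mathcal{T}^\bot\cap{_\mathcal{C}\X}$; the goal is to produce an $\E$-triangle sequence $C\to T^0\to T^1\to\cdots\to T^n$ with each $T^i\in\mathcal{T}$.

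The first step is to check the hypothesis of Lemma~\ref{ch} for $\omega=\mathcal{T}$ and $\Y={\rm Pres}^n_{_\mathcal{C}\X}(\mathcal{T})$: for every $Y\in\Y$ there is an $\E$-triangle $Y'\to T\to Y$ with $T\in\mathcal{T}$ and $Y'\in\Y$. Strong contravariant finiteness of $\mathcal{T}$ furnishes the right $\mathcal{T}$-approximation $T\to Y$, and mimicking the final paragraph of the proof of Lemma~\ref{4.2} (using Lemma~\ref{ort}(4) to keep the cocone inside ${_\mathcal{C}\X}$, together with the observation that the right $\mathcal{T}$-approximation of an object in $\mathcal{T}^\bot$ has its cocone again in $\mathcal{T}^\bot$) shows $Y'\in\mathcal{T}^\bot\cap{_\mathcal{C}\X}=\Y$. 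Applying Lemma~\ref{ch}(2) to our length-$n$ sequence now yields an $\E$-triangle $C\to U\to V$ with $U\in\Y$ and an $\E$-triangle sequence $V\to M_{n-1}\to\cdots\to M_1\to C'$ with each $M_i\in\mathcal{T}$.

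Next I would take a right $\mathcal{T}$-approximation $K\to T^0\to U$ of $U$, with $T^0\in\mathcal{T}$ and $K\in\Y$. Since $C\in\mathcal{C}$ and $K\in\Y\subseteq{_\mathcal{C}\X}\subseteq\mathcal{C}^\bot$, we have $\E(C,K)=0$, so the inflation $C\to U$ lifts through $T^0\to U$ to a morphism $g\colon C\to T^0$, which is an inflation by the WIC condition. A $3{\times}3$ diagram built from the $\E$-triangles $C\to T^0\to\tilde{C}$ (the cone of $g$), $K\to T^0\to U$, and $C\to U\to V$ then produces an $\E$-triangle $K\to\tilde{C}\to V$. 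Concatenating $C\to T^0\to\tilde{C}$ with the inherited $\mathcal{T}$-sequence on $V$ and iterating the entire procedure on the reduced data (now involving $K\in\Y$ and a $V$ of shorter $\mathcal{T}$-length) produces successively the remaining terms $T^1,\ldots,T^n$. Throughout I would use Lemma~\ref{ort} to keep $\tilde{C}$, $K$ and their successors in the appropriate closure classes, and Proposition~\ref{3} to iterate without leaving $\Y$.

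The main obstacle is the bookkeeping of the length of the coresolution: each application of Lemma~\ref{ch}(2) trades one Copres-term for one $\mathcal{T}$-term via the lift and the $3{\times}3$ diagram, and the $(n+1)$-st $\mathcal{T}$-term must be extracted from the final right $\mathcal{T}$-approximation of the last remaining object in $\Y$. Making every lift go through requires verifying $\E(-,K)=0$ on the correct object at each stage, using $K\in\mathcal{C}^\bot$ while the current source still lies in $\mathcal{C}$ and invoking the self-orthogonality of $\mathcal{T}$ together with $K\in\mathcal{T}^\bot$ for the later stages; this is the delicate part.
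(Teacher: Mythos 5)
Your first paragraph is sound and matches the paper's setup: verifying that $\Y={\rm Pres}^n_{_\mathcal{C}\X}(\mathcal T)$ satisfies the hypothesis of Lemma~\ref{ch} (via ${\rm Pres}^{n}_{_\mathcal{C}\X}(\mathcal T)={\rm Pres}^{n+1}_{_\mathcal{C}\X}(\mathcal T)$ from Proposition~\ref{3}, which is also what puts the tail object of the Copres-sequence into $\Y$) is exactly the right preparation. The gap is in what you do next. You invoke Lemma~\ref{ch}(2), which places $C$ at the \emph{cocone} position of $\xymatrix@C=0.5cm{C\ar[r]&U\ar[r]&V\ar@{-->}[r]&}$, and then try to build the $\mathcal T$-coresolution of $C$ term by term. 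The first lift works, but the resulting cone $\tilde C$ sits in an $\E$-triangle $\xymatrix@C=0.5cm{K\ar[r]&\tilde C\ar[r]&V\ar@{-->}[r]&}$ which splits (since $V\in\check{\mathcal T}$ and $K\in\mathcal T^\bot$, Lemma~\ref{ort}(1)), so $\tilde C\cong K\oplus V$ with $K$ an essentially arbitrary object of $\Y=\mathcal T^\bot\cap{_\mathcal{C}\X}$. To continue you would need an inflation from $K$ into $\mathcal T$ — but $\mathcal T$ is only strongly \emph{contravariantly} finite, and objects of $\Y$ need not lie in $\check{\mathcal T}$ at all (already for $\mathcal T=\mathcal C={\rm add}A$ over an Artin algebra, $\Y$ is all of ${_\mathcal{C}\X}$ while $\check{\mathcal T}$ is tiny). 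Likewise the vanishing you flag as ``the delicate part,'' $\E(-,K')=0$ on the new source, reduces at the second stage to $\E(K,K')=0$ for $K,K'\in\Y$, i.e.\ to $\E(\Y,\Y)=0$, which is false in general. So the iteration does not reduce to a strictly smaller problem and cannot terminate.

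The paper's proof sidesteps all of this by using Lemma~\ref{ch}(1) instead of (2): applied to $C\to P_n\to\cdots\to P_1\to P_0$ with $P_0\in\Y$ (Proposition~\ref{3}), its ``moreover'' clause produces an $\E$-triangle $\xymatrix@C=0.5cm{U\ar[r]&V\ar[r]&C\ar@{-->}[r]&}$ with $U\in\Y$ and $V\in\check{\mathcal T}_n$ \emph{already constructed}, i.e.\ $C$ sits at the cone position. Since $C\in\mathcal C$ and $U\in\Y\subseteq{_\mathcal{C}\X}\subseteq\mathcal C^\bot$, we get $\E(C,U)=0$, the triangle splits, $V\cong C\oplus U$, and closure of $\check{\mathcal T}_n$ under direct summands (Lemma~\ref{ort}(2)) gives $C\in\check{\mathcal T}_n$ in one step. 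I recommend replacing your iterative construction with this splitting argument; the rest of your write-up can stay.
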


\begin{proof}
If $\mathcal C\subseteq {\rm Copres}^{n}({\rm Pres}^{n}_{_\mathcal{C}\X}(\mathcal T))$, then for any $C\in\mathcal C$, there is an $\E$-triangle sequence $C\rightarrow P_{n}\rightarrow P_{n-1}\cdots\rightarrow P_{2}\rightarrow P_{1}\rightarrow P_{0}$ with $P_{i}\in {\rm Pres}^{n}_{_\mathcal{C}\X}(\mathcal T)$ for $1\leq i\leq n$. Using Proposition \ref{3}, one can see $P_{0}\in {\rm Pres}^{n}_{_\mathcal{C}\X}(\mathcal T)$. Let $\X={\rm Pres}^{n}_{_\mathcal{C}\X}(\mathcal T)$ and applying Lemma  \ref{ch}, we have an $\E$-triangle $\xymatrix@C=0.5cm{U\ar[r]&V\ar[r]&C\ar@{-->}[r]&}$ such that $U\in {\rm Pres}^{n}_{_\mathcal{C}\X}(\mathcal T)$ and $V\in\check{\mathcal{T}_{n}}$. Since $U\in {\rm Pres}^{n}_{_\mathcal{C}\X}(\mathcal T)=\mathcal{T}^\bot\cap{_\mathcal{C}\X}\subseteq\mathcal{C}^\bot$, one can see $V\cong C\oplus U.$ Using Lemma \ref{ort}(2), we have $C\in\check{\mathcal{T}_{n}}$.
\end{proof}

\begin{proposition}\label{5}
Assume that $\mathcal T$ is a strongly contravariantly finite subcategory in $\C$ and ${\rm Pres}^{n}_{_\mathcal{C}\X}(\mathcal T)=\mathcal{T}^\bot\cap{_\mathcal{C}\X}$. If $\mathcal C\subseteq {\rm Copres}^{n}({\rm Pres}^{n}_{_\mathcal{C}\X}(\mathcal T))$, then $\mathcal T\subseteq\hat{\mathcal{C}}_{n}$.
\end{proposition}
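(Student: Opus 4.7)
The plan is to pick $T\in\mathcal{T}$ and exhibit a $\mathcal{C}$-resolution of length $n$. The observation preceding Proposition~\ref{3} tells us that $\mathcal{T}\subseteq\mathcal{T}^\bot\cap{_\mathcal{C}\X}={\rm Pres}^n_{_\mathcal{C}\X}(\mathcal{T})$, so $T$ belongs to ${\rm Pres}^n_{_\mathcal{C}\X}(\mathcal{T})$ and admits an $\E$-triangle sequence
\[ X\longrightarrow T_n\longrightarrow T_{n-1}\longrightarrow\cdots\longrightarrow T_1\longrightarrow T \]
with each $T_i\in\mathcal{T}\subseteq{_\mathcal{C}\X}$ and $X\in{_\mathcal{C}\X}$.

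Next I would invoke Lemma~\ref{ch}(1) with $\omega=\mathcal{C}$ and $\Y={_\mathcal{C}\X}$. The subcategory ${_\mathcal{C}\X}$ satisfies the required hypothesis on $\Y$ directly from its definition, and each $T_i$ qualifies as a $\Y$-object. The lemma produces an $\E$-triangle $\xymatrix@C=0.5cm{U\ar[r]&V\ar[r]&X\ar@{-->}[r]&}$ with $U\in{_\mathcal{C}\X}$, together with an $\E$-triangle sequence
\[ V\longrightarrow M_n\longrightarrow M_{n-1}\longrightarrow\cdots\longrightarrow M_1\longrightarrow T \qquad\text{with }M_i\in\mathcal{C}. \]
Since $T\in{_\mathcal{C}\X}=\Y$, the moreover clause of Lemma~\ref{ch}(1) additionally forces $V\in\check{\mathcal{C}}_n$.

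The remaining task is to promote $V\in\check{\mathcal{C}}_n$ to $V\in\mathcal{C}$. Because the outer terms $U,X$ of $U\to V\to X$ both lie in ${_\mathcal{C}\X}$, Lemma~\ref{ort}(3) (closure of ${_\mathcal{C}\X}$ under extensions) yields $V\in{_\mathcal{C}\X}\subseteq\mathcal{C}^\bot$. Using $V\in\check{\mathcal{C}}_n$ I split off an $\E$-triangle $\xymatrix@C=0.5cm{V\ar[r]&C^0\ar[r]&V^1\ar@{-->}[r]&}$ with $C^0\in\mathcal{C}$ and $V^1\in\check{\mathcal{C}}_{n-1}\subseteq\check{\mathcal{C}}$. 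Then $\check{\mathcal{C}}\,\bot\,\mathcal{C}^\bot$ (Lemma~\ref{ort}(1)) gives $\E(V^1,V)=0$, so this $\E$-triangle splits and $V$ appears as a direct summand of $C^0\in\mathcal{C}$; closure of $\mathcal{C}$ under direct summands then delivers $V\in\mathcal{C}$. The sequence $V\to M_n\to\cdots\to M_1\to T$ is therefore an $\E$-triangle sequence with every term in $\mathcal{C}$, witnessing $T\in\hat{\mathcal{C}}_n$.

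The main obstacle I anticipate is setting up Lemma~\ref{ch} in the right form, namely choosing the $\Y={_\mathcal{C}\X}$ instance so that both the intermediate terms $N_i$ and the target $Z=T$ fall under the hypothesis simultaneously, and then recognizing the small but decisive fact $\check{\mathcal{C}}\cap\mathcal{C}^\bot\subseteq\mathcal{C}$ that closes the argument via the splitting.
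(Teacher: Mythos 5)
Your setup is fine: Lemma \ref{ch}(1) with $\omega=\mathcal{C}$ and $\Y={_\mathcal{C}\X}$ does produce an $\E$-triangle $\xymatrix@C=0.5cm{U\ar[r]&V\ar[r]&X\ar@{-->}[r]&}$ with $U\in{_\mathcal{C}\X}$ and an $\E$-triangle sequence $V\to M_n\to\cdots\to M_1\to T$ with $M_i\in\mathcal{C}$, and your closing observation that $\check{\mathcal{C}}_n\cap\mathcal{C}^\bot\subseteq\mathcal{C}$ via Lemma \ref{ort}(1) and the splitting is correct. The gap is the sentence ``the moreover clause additionally forces $V\in\check{\mathcal{C}}_n$.'' The moreover clause does not upgrade the $V$ you already have; it asserts the existence of a (possibly different) triangle $U\to V\to X$ with $V\in\check{\omega}_m$, and it is silent about the attached sequence. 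Indeed, in the proof of that clause the new sequence is built by replacing $Z=T$ with an $\omega$-cover $M_T$ coming from a triangle $T'\to M_T\to T$, so the $V$ it produces comes with a sequence $V\to M_n\to\cdots\to M_1\to M_T$ terminating at $M_T\in\mathcal{C}$ --- which is exactly why $V\in\check{\mathcal{C}}_n$ there --- and not at $T$. You therefore cannot claim a single $V$ that both lies in $\check{\mathcal{C}}_n$ and heads a length-$n$ sequence ending at $T$; and if you try to repair this by appending the deflation $M_T\to T$ to the moreover-sequence, the resolution of $T$ you obtain has $n+2$ terms, i.e.\ you only get $T\in\hat{\mathcal{C}}_{n+1}$.

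This is not a cosmetic slip: the assertion ``$V\in\check{\mathcal{C}}_n$'' for the first-clause $V$ carries essentially all the content of the proposition. Taking the trivial presentation ($T_1=T$, $T_i=0$ for $i\geq 2$, $X=0$), your $V$ is just an $n$-th syzygy of $T$ in a proper $\mathcal{C}$-resolution, and then ``$V\in\check{\mathcal{C}}_n\cap\mathcal{C}^\bot=\mathcal{C}$'' is literally the statement $T\in\hat{\mathcal{C}}_n$ being proved. A telling symptom is that your argument never invokes the hypothesis $\mathcal{C}\subseteq{\rm Copres}^{n}({\rm Pres}^{n}_{_\mathcal{C}\X}(\mathcal{T}))$. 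The paper's proof uses it essentially: it first deduces $\mathcal{C}\subseteq\check{\mathcal{T}}_n$ (Proposition \ref{4}), then applies the dual of Lemma \ref{ch} to a proper $\mathcal{C}$-resolution of an arbitrary $M\in{_\mathcal{C}\X}$ to get a triangle $M\to V\to U$ with $V\in\mathcal{T}^\bot\cap{_\mathcal{C}\X}$ and $U\in\check{\mathcal{T}}_{n-1}$, and concludes $\E^{n+1}(T,M)\cong\E^{n}(T,U)=0$; this Ext-vanishing is what splits the $n$-th syzygy off of $C_n$ and yields $T\in\hat{\mathcal{C}}_n$. You would need to supply an argument of this kind (or some other genuine use of the Copres hypothesis) to justify the membership $V\in\check{\mathcal{C}}_n$, or better, to prove the vanishing $\E^{n+1}(\mathcal{T},{_\mathcal{C}\X})=0$ directly.
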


\begin{proof}
Since $\mathcal T\subset {\rm Pres}^{n}_{_\mathcal{C}\X}(\mathcal T)\subseteq{_\mathcal{C}\X}$, for any $T\in\mathcal T$, there is an $\E$-triangle sequence
$$\xymatrix@C=0.5cm{X_{n+1}\ar[r]^{f_{n+1}}&C_{n}\ar[r]^{f_n}&C_{n-1}\ar[r]\cdots\ar[r]&C_1\ar[r]^{f_1}&C_0\ar[r]^{f_0}&T&}$$
with $X_{n+1}\in {_\mathcal{C}\X}$ and $C_{i}\in\mathcal C$. Then we have $\E^{1}({\rm Cone}(f_{n+1}),X_{n+1})=\E^{n+1}(T,X_{n+1})$. If we have shown that $\E^{n+1}(T,X_{n+1})=0$, then ${\rm Cone}(f_{n+1})\in\mathcal C$ since $\mathcal C$ is closed under direct summands. Therefore $T\in\hat{\mathcal{C}}_{n}.$

Now to show that $\E^{n+1}(T,M)=0$ for any $T\in\mathcal T, M\in {_\mathcal{C}\X}$. Since $M\in {_\mathcal{C}\X}$, there is an $\E$-triangle sequence $Z\rightarrow C_{n}\rightarrow C_{n-1}\cdots\rightarrow C_{2}\rightarrow C_{1}\rightarrow M$ with $C_{i}\in\mathcal C$ and $Z\in {_\mathcal{C}\X}$.
Using Proposition \ref{4}, one can see that $C_{i}\in \check{\mathcal{T}_{n}}$. Applying the dual of Lemma \ref{ch}, we have an $\E$-triangle $\xymatrix@C=0.5cm{M\ar[r]&V\ar[r]&U\ar@{-->}[r]&}$ such that $U\in \check{\mathcal{T}}_{n-1}$ and there is an $\E$-triangle sequence $Z\rightarrow T_{n}\rightarrow T_{n-1}\cdots\rightarrow T_{2}\rightarrow T_{1}\rightarrow V$ with $T_{i}\in\mathcal T$. It is obvious that $V\in {\rm Pres}^{n}_{_\mathcal{C}\X}(\mathcal T)=\mathcal{T}^\bot\cap{_\mathcal{C}\X}$. Using the fact that $\check{\omega}_{n-1}=\{X\in X_{\omega}|\E^{n}(Y,X)=0, \mbox{ for all }\ Y\in\sideset{^{\bot}}{}{\mathop{\omega}}\}$ in the proof of Lemma \ref{ort}(2), we have $\E^{n+1}(T,M)=\E^{n}(T,U)=0$. If we let $M=\X_{n+1}$, then $\E^{n+1}(T,X_{n+1})=0$.
\end{proof}

Now we have the following Bazzoni characterization of $n$-tilting pairs in an extriangulated category.
\begin{theorem}\label{6}
Assume that $\mathcal C\subseteq {\rm Copres}^{n}({\rm Pres}^{n}_{_\mathcal{C}\X}(\mathcal T))$ and $\mathcal T$ is strongly contravariantly finite. Then $(\mathcal{C},\mathcal{T})$ is an $n$-tilting pair if and only if ${\rm Pres}^{n}_{_\mathcal{C}\X}(\mathcal T)=\mathcal{T}^\bot\cap{_\mathcal{C}\X}$.
\end{theorem}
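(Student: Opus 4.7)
The plan is to prove the theorem by combining the three preceding results, with Lemma \ref{1} supplying the forward direction and Propositions \ref{4}--\ref{5} supplying the backward direction. The theorem is essentially a packaging statement: the genuine homological work has already been done, and the task is to verify that the hypotheses line up.

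For the forward direction, I assume $(\mathcal{C},\mathcal{T})$ is an $n$-tilting pair. Together with the standing assumption that $\mathcal{T}$ is strongly contravariantly finite, this is exactly the setting of Lemma \ref{1}, whose equivalence $(2)\Leftrightarrow(3)$ reads $M\in\mathcal{T}^\bot\cap{_\mathcal{C}\X}$ if and only if $M\in{\rm Pres}^{n}_{_\mathcal{C}\X}(\mathcal T)$. This gives the desired equality at once.

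For the backward direction, I assume ${\rm Pres}^{n}_{_\mathcal{C}\X}(\mathcal T)=\mathcal{T}^\bot\cap{_\mathcal{C}\X}$ and check the four defining clauses of an $n$-tilting pair. Self-orthogonality of $\mathcal{C}$ is part of the standing hypothesis. Self-orthogonality of $\mathcal{T}$ follows because $\mathcal{T}\subseteq{\rm Pres}^{n}_{_\mathcal{C}\X}(\mathcal T)=\mathcal{T}^\bot\cap{_\mathcal{C}\X}\subseteq\mathcal{T}^\bot$. The inclusion $\mathcal{C}\subseteq\check{\mathcal{T}}_n\subseteq\check{\mathcal{T}}$ is Proposition \ref{4}, and the inclusion $\mathcal{T}\subseteq\hat{\mathcal{C}}_n\subseteq\hat{\mathcal{C}}$, which also yields the bound $\mathcal{C}\text{-}{\rm dim}\,\mathcal{T}\leq n$, is Proposition \ref{5}. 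Both propositions apply because their two hypotheses --- strong contravariant finiteness of $\mathcal{T}$ and the inclusion $\mathcal{C}\subseteq{\rm Copres}^n({\rm Pres}^{n}_{_\mathcal{C}\X}(\mathcal T))$ --- are precisely the standing hypotheses of the theorem, and the identity ${\rm Pres}^{n}_{_\mathcal{C}\X}(\mathcal T)=\mathcal{T}^\bot\cap{_\mathcal{C}\X}$ is the assumption of this direction.

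Since each ingredient has already been established, I anticipate no real obstacle; the only care needed is to notice that the hypothesis ${\rm Pres}^n_{_\mathcal{C}\X}(\mathcal T)=\mathcal{T}^\bot\cap{_\mathcal{C}\X}$ is strong enough to guarantee $\mathcal{T}\subseteq\mathcal{T}^\bot$ on its own, so that the self-orthogonality of $\mathcal{T}$ need not be postulated separately in the theorem's statement.
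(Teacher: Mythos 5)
Your proposal is correct and follows exactly the paper's route: the paper's proof is the one-line citation of Lemma \ref{1} (for the forward direction) together with Propositions \ref{4} and \ref{5} (for the converse), and your observation that the equality ${\rm Pres}^{n}_{_\mathcal{C}\X}(\mathcal T)=\mathcal{T}^\bot\cap{_\mathcal{C}\X}$ forces $\mathcal{T}$ to be self-orthogonal is precisely the remark the authors record immediately after Lemma \ref{1}. Your write-up merely makes explicit the hypothesis-checking that the paper leaves implicit.
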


\begin{proof}
Using Lemma \ref{1}, Propositions \ref{4} and  \ref{5}, one can see the result holds.
\end{proof}

In particular, using Theorem \ref{6} to Example \ref{exampair}(5), we get the Bazzoni characterization of an $n$-tilting subcategory in \cite{ZZhuang}.

\begin{corollary}{\rm (\cite[Theorem 1]{ZZhuang})}\label{7}
Assume that  $\mathcal T$ is a subcategory of $\C$ which is strongly contravariantly finite and closed under direct summands. Then $\mathcal T$ is an $n$-tilting subcategory if and only if ${\rm Pres}^{n}(\mathcal T)=\mathcal T^{\bot}.$
\end{corollary}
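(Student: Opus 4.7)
The plan is to derive the statement directly from Theorem \ref{6} by specializing the tilting pair framework to the choice $\mathcal{C}={\rm Proj}(\C)$, as indicated by Example \ref{exampair}(5). First I would unwind what the right-hand data become under this specialization. Since $\mathcal{C}={\rm Proj}(\C)$ is automatically self-orthogonal and $\C$ has enough projectives, every object of $\C$ admits an $\E$-triangle sequence by projectives, and each syzygy lies in $\mathcal{C}^{\bot}$ trivially. Hence ${_\mathcal{C}\X}=\C$. Consequently ${\rm Pres}^{n}_{_\mathcal{C}\X}(\mathcal T)={\rm Pres}^{n}(\mathcal T)$ and $\mathcal T^{\bot}\cap{_\mathcal{C}\X}=\mathcal T^{\bot}$, so the conclusion of Theorem \ref{6} collapses exactly to the equation ${\rm Pres}^{n}(\mathcal T)=\mathcal T^{\bot}$ claimed in the corollary.

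Next I would match up the two notions of ``$n$-tilting''. Invoking Example \ref{exampair}(5), a subcategory $\mathcal T$ is $n$-tilting in the sense of \cite{ZZhuang} exactly when the pair $({\rm Proj}(\C),\mathcal T)$ is an $n$-tilting pair in the sense of the present paper: self-orthogonality of $\mathcal T$, the bound $\mathcal T\subseteq\widehat{{\rm Proj}(\C)}_{n}$, and the existence of an $\E$-triangle coresolution of every projective by objects of $\mathcal T$ of length at most $n$ correspond precisely to conditions (2)--(4) in the definition of tilting pair together with the dimension bound.

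It then remains to verify the hypothesis ``$\mathcal C\subseteq{\rm Copres}^{n}({\rm Pres}^{n}_{_\mathcal{C}\X}(\mathcal T))$'' of Theorem \ref{6} in both directions. In the forward direction, assuming $\mathcal T$ is $n$-tilting gives by definition a length-$n$ coresolution of every projective $P$ by objects of $\mathcal T\subseteq{\rm Pres}^{n}(\mathcal T)$, so ${\rm Proj}(\C)\subseteq{\rm Copres}^{n}(\mathcal T)\subseteq{\rm Copres}^{n}({\rm Pres}^{n}(\mathcal T))$; Theorem \ref{6} then yields ${\rm Pres}^{n}(\mathcal T)=\mathcal T^{\bot}$. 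Conversely, suppose ${\rm Pres}^{n}(\mathcal T)=\mathcal T^{\bot}$. Then $\mathcal T\subseteq{\rm Pres}^{n}(\mathcal T)=\mathcal T^{\bot}$ gives self-orthogonality, and using the strong contravariant finiteness of $\mathcal T$ together with Lemma \ref{ch} (applied to the identity ``sequence'' at a projective) one can produce, for each $P\in{\rm Proj}(\C)$, a length-$n$ coresolution into ${\rm Pres}^{n}(\mathcal T)=\mathcal T^{\bot}$, thereby verifying the hypothesis of Theorem \ref{6} in this direction as well. Feeding this back into Theorem \ref{6} upgrades the equality ${\rm Pres}^{n}(\mathcal T)=\mathcal T^{\bot}$ into the statement that $({\rm Proj}(\C),\mathcal T)$ is an $n$-tilting pair, i.e.\ that $\mathcal T$ is $n$-tilting in the sense of \cite{ZZhuang}.

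The main obstacle I anticipate is precisely the last point: checking that the abstract hypothesis ${\rm Proj}(\C)\subseteq{\rm Copres}^{n}({\rm Pres}^{n}(\mathcal T))$ actually holds in the backward direction, using only ${\rm Pres}^{n}(\mathcal T)=\mathcal T^{\bot}$ together with strong contravariant finiteness of $\mathcal T$. The rest of the argument is essentially a bookkeeping translation between the two definitions and should follow routinely from Lemma \ref{1} and Propositions \ref{4}--\ref{5} once the specialization ${_\mathcal{C}\X}=\C$ is in hand.
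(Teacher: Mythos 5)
Your proposal follows exactly the route the paper takes: Corollary \ref{7} is obtained by specializing Theorem \ref{6} to $\mathcal{C}={\rm Proj}(\C)$ via Example \ref{exampair}(5), after observing that ${_\mathcal{C}\X}=\C$ (enough projectives plus ${\rm Proj}(\C)^{\bot}=\C$), so that ${\rm Pres}^{n}_{_\mathcal{C}\X}(\mathcal T)={\rm Pres}^{n}(\mathcal T)$ and $\mathcal T^{\bot}\cap{_\mathcal{C}\X}=\mathcal T^{\bot}$. The paper itself gives only this one-line reduction, so your unwinding of the two directions is a faithful (and more explicit) version of its argument.

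The one place your sketch is shaky is the step you yourself flag: verifying the standing hypothesis ${\rm Proj}(\C)\subseteq{\rm Copres}^{n}({\rm Pres}^{n}(\mathcal T))$ in the backward direction. Your proposed mechanism --- ``Lemma \ref{ch} applied to the identity sequence at a projective'' --- does not obviously produce a coresolution: Lemma \ref{ch} converts an existing $\E$-triangle sequence into another one, it does not manufacture a copresentation of $P$ from nothing, and strong contravariant finiteness of $\mathcal T$ gives right approximations, not the left-hand data needed here. The hypothesis nevertheless holds for a much simpler reason: $\C$ has enough injectives, every injective lies in $\mathcal T^{\bot}={\rm Pres}^{n}(\mathcal T)$, and splicing the $\E$-triangles $\xymatrix@C=0.4cm{\Sigma^{i}P\ar[r]&I^{i}\ar[r]&\Sigma^{i+1}P\ar@{-->}[r]&}$ for $0\le i\le n-1$ exhibits $P\in{\rm Copres}^{n}({\rm Pres}^{n}(\mathcal T))$. (In the forward direction your truncation of the finite $\mathcal T$-coresolution of $P$ guaranteed by condition (4) of the tilting-pair definition is fine, and in fact Lemma \ref{1} already gives that implication without invoking the copresentation hypothesis.) With that substitution the argument is complete and coincides with the paper's.
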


\begin{example}
If $\C=\mbox{mod}A$, where $A$ is an Artin algebra and $\mbox{mod}A$ is the category of finitely generated left $A$-modules. We can get the Bazzoni characterization of $n$-tilting pairs in \cite[Theorem 3.10]{wx} from Theorem \ref{6}.
\end{example}

\section{Auslander-Reiten correspondence}

In this section, we mainly study the Auslander-Reiten correspondence for tilting pairs, which classifies finite $\mathcal{C}$-tilting subcategories for a certain self-orthogonal subcategory $\mathcal{C}$ with some assumptions.

\begin{proposition}\label{4.3}
Assume that $\mathcal{T}$ is  a finite $\mathcal{C}$-tilting subcategory.
\begin{itemize}
  \item [$(1)$] ${_\mathcal{C}\X}=\check{{_\mathcal{T}\X}}\cap\mathcal{C}^\bot$.
  \item  [$(2)$] $\check{{_\mathcal{T}\X}}={_\mathcal{C}\check{\X}}$.
\end{itemize}
\end{proposition}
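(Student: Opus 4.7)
My plan is to prove Proposition \ref{4.3} by establishing parts (1) and (2) in order, with (1) serving as the key input for (2).

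For part (1), I will prove the two inclusions separately. The inclusion $\check{{_\mathcal{T}\X}}\cap\mathcal{C}^\bot\subseteq {_\mathcal{C}\X}$ proceeds by induction on the length $m$ of an ${_\mathcal{T}\X}$-coresolution of $M$. The base case $m=0$ gives $M\in {_\mathcal{T}\X}\subseteq {_\mathcal{C}\X}$ directly from Lemma \ref{4.2}. For the inductive step, split off the first $\E$-triangle $M\to X^0\to Y^1$, where $X^0\in {_\mathcal{T}\X}$ and $Y^1$ has coresolution of length $m-1$. The containment $X^0\in {_\mathcal{T}\X}\subseteq\mathcal{T}^\bot\subseteq\mathcal{C}^\bot$ (invoking $\mathcal{T}^\bot\subseteq\mathcal{C}^\bot$ from Lemma \ref{4.2}) together with $M\in\mathcal{C}^\bot$ forces $Y^1\in\mathcal{C}^\bot$ via the long exact sequence, so the inductive hypothesis places $Y^1\in {_\mathcal{C}\X}$. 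Then Lemma \ref{ort}(4) applied to $M\to X^0\to Y^1$ (with $X^0,Y^1\in {_\mathcal{C}\X}$ and $M\in\mathcal{C}^{\bot_1}$) yields $M\in {_\mathcal{C}\X}$.

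The reverse inclusion ${_\mathcal{C}\X}\subseteq\check{{_\mathcal{T}\X}}\cap\mathcal{C}^\bot$ is the technical core. Fix $n$ such that $\mathcal{T}\subseteq\hat{\mathcal{C}}_n$ (finite because $\mathcal{T}$ is finite and $\mathcal{T}\subseteq\hat{\mathcal{C}}$). Given $M\in {_\mathcal{C}\X}$, the condition $M\in\mathcal{C}^\bot$ is automatic. I plan to construct an $\E$-triangle sequence $M\to T^0\to T^1\to\cdots\to T^{n-1}\to M_n$ with $T^i\in\mathcal{T}$ and $M_n\in {_\mathcal{T}\X}$ by iterated left $\mathcal{T}$-approximations: set $M_0=M$ and form $\E$-triangles $M_i\to T^i\to M_{i+1}$ where $M_i\to T^i$ is a left $\mathcal{T}$-approximation (relying on $\mathcal{T}$ being strongly covariantly finite in ${_\mathcal{C}\X}$). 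Lemma \ref{ort}(3) then inductively places each $M_i$ in ${_\mathcal{C}\X}$. Self-orthogonality of $\mathcal{T}$ and the long exact sequence applied to each $\E$-triangle give $\E^j(T,M_{i+1})\cong\E^{j+1}(T,M_i)$ for $j\geq 1$, and iterating yields $\E^j(T,M_n)\cong\E^{j+n}(T,M)$. Separately, applying $\C(-,M)$ to a $\mathcal{C}$-resolution of $T$ of length $n$ and using $M\in\mathcal{C}^\bot$ forces $\E^l(T,M)=0$ for all $l\geq n+1$. Combining these, $\E^j(T,M_n)=0$ for all $j\geq 1$, so $M_n\in\mathcal{T}^\bot\cap {_\mathcal{C}\X}={_\mathcal{T}\X}$ by Lemma \ref{4.2}. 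The constructed sequence then witnesses $M\in\check{{_\mathcal{T}\X}}_n\subseteq\check{{_\mathcal{T}\X}}$.

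For part (2), the inclusion $\check{{_\mathcal{T}\X}}\subseteq {_\mathcal{C}\check{\X}}$ is immediate from ${_\mathcal{T}\X}\subseteq {_\mathcal{C}\X}$ (Lemma \ref{4.2}). For the reverse, take $M\in {_\mathcal{C}\check{\X}}$ with coresolution $M\to Y^0\to\cdots\to Y^m$ where each $Y^i\in {_\mathcal{C}\X}$; by part (1), each $Y^i\in\check{{_\mathcal{T}\X}}$. I then induct on $m$: the base case is immediate; for the inductive step, split off $M\to Y^0\to Z^1$ where $Z^1$ has a shorter coresolution in ${_\mathcal{C}\X}$, apply the inductive hypothesis to obtain $Z^1\in\check{{_\mathcal{T}\X}}$, and invoke Proposition \ref{cone-co} (closure of $\check{{_\mathcal{T}\X}}$ under cocones of deflations) to conclude $M\in\check{{_\mathcal{T}\X}}$. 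The main obstacle will be the dimension-shifting computation in the $\subseteq$ direction of (1), which must weave together the self-orthogonality of $\mathcal{T}$, the finiteness of its $\mathcal{C}$-dimension, and Lemma \ref{4.2}; it also implicitly requires that $\mathcal{T}$ admits strongly covariantly finite approximations in ${_\mathcal{C}\X}$, which I expect to be part of the standing setup of Section 5.
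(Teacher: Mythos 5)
Your treatment of the inclusion $\check{{_\mathcal{T}\X}}\cap\mathcal{C}^\bot\subseteq{_\mathcal{C}\X}$ and of part (2) is sound and matches the paper's argument in substance (the paper runs the first as an iteration over the $\E$-triangles $K_{i+1}\to X_i\to K_i$, first pushing $\mathcal{C}^\bot$ down the sequence and then pushing ${_\mathcal{C}\X}$ back up via Lemma \ref{ort}(4), and it compresses part (2) to the closure properties of $\check{{_\mathcal{T}\X}}$; your induction is an equivalent reorganization).

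The gap is in the core step ${_\mathcal{C}\X}\subseteq\check{{_\mathcal{T}\X}}$. You build the coresolution $M\to T^0\to\cdots\to T^{n-1}\to M_n$ by iterated left $\mathcal{T}$-approximations, which requires each left $\mathcal{T}$-approximation of $M_i$ to be an inflation whose cone again lies in ${_\mathcal{C}\X}$, i.e., that $\mathcal{T}$ is strongly covariantly finite in ${_\mathcal{C}\X}$. You acknowledge this and guess it is part of the standing setup of Section 5; it is not. Proposition \ref{4.3} assumes only that $\mathcal{T}$ is a finite $\mathcal{C}$-tilting subcategory, and the paper imposes strong covariant finiteness only on the subcategories $\mathcal{D}$ appearing in the correspondence, never on $\mathcal{T}$. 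Finiteness of $\mathcal{T}$ in a Krull--Schmidt category yields left approximations as morphisms, but not as inflations: in an extriangulated (or even exact) category a left $\mbox{add}(T)$-approximation need not be an inflation, so the $\E$-triangles $M_i\to T^i\to M_{i+1}$ you need may simply fail to exist. The paper's route avoids this entirely: starting from a $\mathcal{C}$-resolution $M_n\to C_n\to\cdots\to C_1\to M$ (available since $M\in{_\mathcal{C}\X}$) and using the tilting-pair axiom $\mathcal{C}\subseteq\check{\mathcal{T}}$, the dual of Lemma \ref{ch} manufactures an $\E$-triangle with cocone $M$, one term in $\check{\mathcal{T}}$, and the other term $V$ admitting a finite $\mathcal{T}$-resolution ending in $M_n$; your dimension-shifting computation (which is correct as written) then shows $V\in\mathcal{T}^\bot\cap{_\mathcal{C}\X}={_\mathcal{T}\X}$, and Proposition \ref{equ} together with Proposition \ref{cone-co} gives $M\in\check{{_\mathcal{T}\X}}$. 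In short, the existence of the $\mathcal{T}$-coresolution is exactly what must be constructed via Lemma \ref{ch}, not assumed via an approximation hypothesis.
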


\begin{proof}
(1) ${_\mathcal{C}\X}\subseteq\check{{_\mathcal{T}\X}}\cap\mathcal{C}^\bot$: Since ${_\mathcal{C}\X}\subseteq\mathcal{C}^\bot$, it suffices to show that ${_\mathcal{C}\X}\subseteq\check{{_\mathcal{T}\X}}$. Let $M\in {_\mathcal{C}\X}$. Take an $\E$-triangle sequence $\xymatrix@C=0.4cm{M_n\ar[r]&C_n\ar[r]&\cdots\ar[r]&C_1\ar[r]&M}$ with $M_n\in {_\mathcal{C}\X}$ and each $C_i\in\mathcal{C}$. By the dual of Lemma \ref{ch}, there are an $\E$-triangle $\xymatrix@C=0.5cm{M\ar[r]&U\ar[r]&V\ar@{-->}[r]&}$ and an $\E$-triangle sequence $$\xymatrix@C=0.4cm{M_n\ar[r]&T_n\ar[r]&\cdots\ar[r]&T_1\ar[r]&V}$$ with $U\in\check{\mathcal{T}}$ and $T_i\in\mathcal{T}$.
Since $\mathcal{T}\subseteq\hat{\mathcal{C}}\subseteq{_\mathcal{C}\X}$, each $T_i\in{_\mathcal{C}\X}$. Moreover, $M_n\in{_\mathcal{C}\X}$, so $V\in{_\mathcal{C}\X}$ by Lemma \ref{ort}(3).

Claim: $V\in\mathcal{T}^\bot$. In fact, for the $\E$-triangle sequence $$\xymatrix@C=0.5cm{M_n\ar[r]&T_n\ar[r]&\cdots\ar[r]&T_1\ar[r]&V},$$ since $\mathcal{T}$ is self-orthogonal, we have $\E^i(T,V)\cong\E^{i+n}(T,M_n)$ for any $T\in\mathcal{T}$ and any $i\geq 1$.
Since $\mathcal{T}$ is an $n$-$\mathcal{C}$-tilting subcategory, there is an $\E$-triangle sequence $\xymatrix@C=0.5cm{C_n\ar[r]&\cdots\ar[r]&C_1\ar[r]&C_0\ar[r]&T}$ with each $C_i\in\mathcal{C}$. The condition $M_n\in{_\mathcal{C}\X}\subseteq\mathcal{C}^\bot$ implies $\E^{i+n}(T,M_n)\cong\E^i(C_n,M_n)=0$, and hence $\E^i(T,V)=0$ for  any $i\geq 1$. Thus $V\in\mathcal{T}^\bot$. By Lemma \ref{4.2}, $V\in {_\mathcal{T}\X}$, and hence $M\in\check{{_\mathcal{T}\X}}$ by Proposition \ref{equ}.

$\check{{_\mathcal{T}\X}}\cap\mathcal{C}^\bot\subseteq{_\mathcal{C}\X}$: Let $M\in\check{{_\mathcal{T}\X}}\cap\mathcal{C}^\bot$. Choose an $\E$-triangle sequence
$$\xymatrix@C=0.5cm{M\ar[r]^{f_m}&X_m\ar[r]&\cdots\ar[r]^{f_0}&X_0}$$ with each $X_i\in{_\mathcal{T}\X}$, and consider the corresponding $\E$-triangles
$$\xymatrix@C=0.5cm{K_{i+1}\ar[r]&X_i\ar[r]&K_i\ar@{-->}[r]&}$$ for $1\leq i\leq m$ ($K_{m+1}=M$, $K_1=X_0$). By Lemma \ref{4.2}, ${_\mathcal{T}\X}\subseteq{_\mathcal{C}\X}\subseteq\mathcal{C}^\bot$, and so $X_i\in\mathcal{C}^\bot$. Moreover, $M\in\mathcal{C}^\bot$ implies $K_i\in\mathcal{C}^\bot$. By Lemma \ref{ort}, we can iteratively obtain $K_2\in{_\mathcal{C}\X}$, $K_3\in{_\mathcal{C}\X}$, $\cdots$, $K_m\in{_\mathcal{C}\X}$, $M\in{_\mathcal{C}\X}$.

(2) By (1), ${_\mathcal{C}\X}\subseteq\check{{_\mathcal{T}\X}}$. By Lemma \ref{ort}, ${_\mathcal{C}\check{\X}}\subseteq\check{{_\mathcal{T}\X}}$. Conversely, $\check{{_\mathcal{T}\X}}\subseteq{_\mathcal{C}\check{\X}}$.
\end{proof}

\begin{lemma}\label{4.5} {\rm(\cite[Lemma 2.3]{CZZ})}
Let $\omega$ be closed under extensions.
\begin{itemize}
  \item [$(1)$] If $f: W\to M$ is a right $\omega$-approximation of $M$, then ${\rm CoCone}f\in\omega^{\bot_1}$.
  \item [$(2)$] If $f: M\to W$ is a left $\omega$-approximation of $M$, then ${\rm Cone}f\in{^{\bot_1}\omega}$.
\end{itemize}
\end{lemma}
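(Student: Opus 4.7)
I would prove (1) directly; statement (2) is dual and follows by applying the same argument in the opposite extriangulated category. Set $K=\mathrm{CoCone}(f)$, giving the $\E$-triangle $\xymatrix@C=0.5cm{K\ar[r]^g & W\ar[r]^f & M\ar@{-->}[r]^\delta &}$. Fix $X\in\omega$ and an extension $\eta\in\E(X,K)$ realized by $\xymatrix@C=0.5cm{K\ar[r]^a & L\ar[r]^b & X\ar@{-->}[r]^\eta &}$; the goal is $\eta=0$.

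The first step is a long-exact-sequence reduction. Applying $\C(X,-)$ to the defining triangle and invoking Lemma~\ref{long} produces the exact piece
$$\C(X,W)\xrightarrow{f_{*}}\C(X,M)\xrightarrow{\partial}\E(X,K)\xrightarrow{g_{*}}\E(X,W).$$
Because $X\in\omega$ and $f$ is a right $\omega$-approximation, $f_{*}$ is surjective, so the connecting map $\partial$ vanishes and $g_{*}$ is injective on $\E(X,K)$. It therefore suffices to prove $g_{*}\eta=0$, i.e.\ to show that the $\E$-triangle realizing $g_{*}\eta$ splits.

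The second step is to realize $g_{*}\eta$ explicitly: apply the dual of Lemma~\ref{ile}(1) to the two $\E$-triangles sharing source $K$, yielding a commutative $3\times 3$ diagram whose middle column is an $\E$-triangle $\xymatrix@C=0.5cm{W\ar[r]^{m_1} & N\ar[r]^{e_2} & X\ar@{-->}[r]^{g_{*}\eta} &}$ and whose middle row is $\xymatrix@C=0.5cm{L\ar[r]^{m_2} & N\ar[r]^{e_1} & M\ar@{-->}[r]^{a_{*}\delta} &}$, with the compatibility $e_1 m_1=f$. Because $W,X\in\omega$ and $\omega$ is closed under extensions, $N\in\omega$. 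The right $\omega$-approximation property for $f$ then factors $e_1\colon N\to M$ as $e_1=fh$ for some $h\colon N\to W$; consequently $f(hm_1)=e_1m_1=f$, so $hm_1-\id_W$ factors through $g$ by the exact sequence above.

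The main obstacle is promoting the congruence ``$hm_1$ differs from $\id_W$ by a map factoring through $g$'' to an actual retraction of $m_1$, which would split the middle column and force $g_{*}\eta=0$. I expect to handle this by exploiting the non-uniqueness of the factorization $e_1=fh$ --- any two choices of $h$ differ by a map factoring through $g$ --- to cancel the correction term, or equivalently by producing a section of $e_2$ through a parallel application of the approximation property combined with Lemma~\ref{ilem}. Once the middle column is split we obtain $g_{*}\eta=0$, and the injectivity of $g_{*}$ from the first step delivers $\eta=0$, completing (1).
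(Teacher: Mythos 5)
Your first two steps are sound and are exactly how Wakamatsu's lemma gets proved: $g_*$ is injective on $\E(X,K)$ because $f_*$ is surjective on $\C(X,-)$; the dual of Lemma~\ref{ile}(1) produces $N$, which lies in $\omega$ by extension-closure; and the approximation property gives $h$ with $fh=e_1$, hence $f(hm_1-\id_W)=0$ and $hm_1=\id_W+gs$ for some $s\colon W\to K$. The gap you flag at the end, however, is not closable by the devices you propose. Replacing $h$ by $h+gv$ with $v\in\C(N,K)$ changes $hm_1$ to $hm_1+g(vm_1)$, so cancelling the correction term $gs$ amounts to extending $s\colon W\to K$ along $m_1\colon W\to N$; applying $\C(-,K)$ to the middle column $W\xrightarrow{m_1}N\xrightarrow{e_2}X$ shows that the obstruction to such an extension is precisely $s_*(g_*\eta)=(sg)_*\eta\in\E(X,K)$ --- an element of the very group you are trying to annihilate, so the argument is circular. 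In fact no refinement of this kind can succeed at the stated level of generality: for $M=0$ the zero map $W\to 0$ is a deflation and a right $\omega$-approximation for every $W\in\omega$, with ${\rm CoCone}$ equal to $W$ itself, so the literal statement would force $\E(\omega,\omega)=0$ for any extension-closed $\omega$.

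The missing ingredient is right minimality. Since $\C$ is Krull--Schmidt, one may (and must) take $f$ to be a right \emph{minimal} $\omega$-approximation --- equivalently, read the lemma as an existence statement, which is how the paper uses it in Lemma~\ref{4.10} (``we may take $X_1\in{}^{\bot_1}\mathcal{D}$''). With $f$ right minimal, the identity $f(hm_1)=f$ forces $hm_1$ to be an automorphism of $W$, so $m_1$ is a split inflation, the middle column splits, $g_*\eta=0$, and your Step~1 injectivity yields $\eta=0$. Note that the paper gives no proof of this lemma, citing \cite[Lemma 2.3]{CZZ} instead; your reduction to the splitting of the middle column is the right skeleton of that argument, but the final step genuinely requires minimality rather than a cleverer choice of the lift $h$.
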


\begin{definition}
Let $\mathcal{S}$ be a finite subset of objects in $\C$, and $M\in\C$.
A \emph{finite $\mathcal{S}$-filtration} of $M$ is a class of $\E$-triangles
\begin{gather*}
  \xymatrix@C=0.5cm{0\ar[r]&M_1\ar[r]&S_1\ar@{-->}[r]&} \\
  \xymatrix@C=0.5cm{M_1\ar[r]&M_2\ar[r]&S_2\ar@{-->}[r]&}\\
  \vdots\\
  \xymatrix@C=0.5cm{M_{n-2}\ar[r]&M_{n-1}\ar[r]&S_{n-1}\ar@{-->}[r]&} \\
  \xymatrix@C=0.5cm{M_{n-1}\ar[r]&M\ar[r]&S_n\ar@{-->}[r]&}
\end{gather*}
such that each $S_i\in\mathcal{S}$ for any $1\leq i\leq n$.

A category $\C$ is called \emph{finitely filtered} if there is a finite subset $\mathcal{S}$ in $\C$ such that each object in $\C$ has a finite $\mathcal{S}$-filtration.
\end{definition}

\begin{lemma}\label{4.7}
Let $\mathcal{C}$ be self-orthogonal with $\check{{_\mathcal{C}\X}}$ finitely filtered. Assume that $\mathcal{D}\subseteq{_\mathcal{C}\X}$ is closed under extensions and cones of inflations with $\check{\mathcal{D}}=\check{{_\mathcal{C}\X}}$. Then
\begin{itemize}
  \item [$(1)$] there is some $n$ with $\check{\mathcal{D}}=\check{\mathcal{D}}_n$.
  \item [$(2)$] ${^\bot\mathcal{D}}\cap{_\mathcal{C}{\X}}\subseteq \hat{{\mathcal{C}}}$.
\end{itemize}
\end{lemma}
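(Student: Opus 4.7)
For (1), the plan is to exploit the finite filtration of $\check{{_\mathcal{C}\X}}$. Let $\mathcal{S}\subseteq\check{{_\mathcal{C}\X}}=\check{\mathcal{D}}$ be a finite filtering subset; each $S\in\mathcal{S}$ lies in some $\check{\mathcal{D}}_{n_S}$, and I will set $n:=\max_{S\in\mathcal{S}} n_S$. The key closure property I need is that $\check{\mathcal{D}}_n$ is closed under extensions, which I will extract from Lemma \ref{3.6}(7) applied with $\omega:=\mathcal{D}$: the hypotheses are satisfied because $\mathcal{D}$ is closed under extensions and cones of inflations by assumption, while $\check{\mathcal{D}}=\check{{_\mathcal{C}\X}}$ is closed under extensions, cones of inflations, and cocones of deflations by Proposition \ref{cone-co}. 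A straightforward induction on the length of the $\mathcal{S}$-filtration of an arbitrary $M\in\check{\mathcal{D}}$ then yields $M\in\check{\mathcal{D}}_n$, giving the equality.

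For (2), I would fix $X\in{^\bot\mathcal{D}}\cap{_\mathcal{C}\X}$ and pick a $\mathcal{C}$-resolution $\cdots\to C_1\to C_0\to X$ with kernels $K_i\in{_\mathcal{C}\X}$. Because $\mathcal{D}\subseteq{_\mathcal{C}\X}\subseteq\mathcal{C}^\bot$, applying $\C(-,D)$ (for $D\in\mathcal{D}$) to each $\E$-triangle $K_{i+1}\to C_i\to K_i$ gives $\E^j(K_{i+1},D)\cong\E^j(K_i,D)$ for $j\geq 1$, so induction on $i$ (with base $X\in{^\bot\mathcal{D}}$) shows each $K_i\in{^\bot\mathcal{D}}$. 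By (1), every $K_i\in\check{\mathcal{D}}_n$, and I then plan to apply Lemma \ref{ch}(1) with $\omega=\mathcal{C}$ and $\Y={_\mathcal{C}\X}$ to a coresolution $K_i\to E^0\to\cdots\to E^n$ (whose terms lie in $\mathcal{D}\subseteq{_\mathcal{C}\X}$) to produce an $\E$-triangle $U\to V\to K_i$ with $U\in{_\mathcal{C}\X}$ and $V\in\check{\mathcal{C}}_n$. Since $\mathcal{C}^\bot$ is closed under extensions and $U,K_i\in{_\mathcal{C}\X}\subseteq\mathcal{C}^\bot$, one has $V\in\check{\mathcal{C}}_n\cap\mathcal{C}^\bot$.

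The crucial auxiliary claim I would establish separately is $\check{\mathcal{C}}\cap\mathcal{C}^\bot=\mathcal{C}$. Given $V$ in the intersection with $V\in\check{\mathcal{C}}_m$, take the $\X_\mathcal{C}$-style coresolution $V\to W^0\to W^1\to\cdots$ with $W^j\in\mathcal{C}$ and successive kernels $K^j\in{^\bot\mathcal{C}}$; the characterization of $\check{\mathcal{C}}_m$ in the proof of Lemma \ref{ort}(2) forces $K^m\in\mathcal{C}$, while $V\in\mathcal{C}^\bot$ combined with self-orthogonality of $\mathcal{C}$ propagates $K^j\in\mathcal{C}^\bot$ for every $j$ via the LES. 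Then the triangles $K^{j-1}\to W^{j-1}\to K^j$ split successively from $j=m$ down to $j=1$ (because $\E(K^j,K^{j-1})=0$ once $K^j\in\mathcal{C}$), exhibiting each $K^{j-1}$ as a summand of $W^{j-1}\in\mathcal{C}$, and in particular $V=K^0\in\mathcal{C}$. Granted this, $V\in\mathcal{C}$, so the triangle $U\to V\to K_i$ presents $V\to K_i$ as a genuine $\mathcal{C}$-deflation whose cocone $U$ again lies in ${^\bot\mathcal{D}}\cap{_\mathcal{C}\X}$ (by the LES using $V\in\mathcal{C}\subseteq{^\bot\mathcal{D}}$).

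The main obstacle I anticipate is showing that iterating this construction terminates after finitely many steps --- equivalently, that some kernel of the induced $\mathcal{C}$-resolution of $X$ actually lies in $\mathcal{C}$. My plan for forcing termination is a dimension-shift argument: the identity $\E^{n+1}(X,M)\cong\E^1(K_n,M)$, valid for $M\in\mathcal{C}^\bot$ by iterating the LES along the $\mathcal{C}$-resolution, combined with the ${^\bot\mathcal{D}}$-vanishing and the uniform codimension bound from (1), should pin down an $\Ext^1$-vanishing that makes the relevant triangle split and thereby exhibit some $K_k$ as a direct summand of an object of $\mathcal{C}$, placing $X$ in $\hat{\mathcal{C}}_k$.
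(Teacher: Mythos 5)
Your proposal is correct and follows essentially the same route as the paper: part (1) is the paper's argument (take $n$ to be the maximal $\mathcal{D}$-codimension of the objects in the finite filtering set $\mathcal{S}$ and propagate the bound along the filtration via Lemma \ref{3.6}), and the termination step you sketch at the end of (2) --- namely $\E(K_n,K_{n+1})\cong\E^{n+1}(X,K_{n+1})=0$ because $K_{n+1}\in{_\mathcal{C}\X}\subseteq\check{{_\mathcal{C}\X}}=\check{\mathcal{D}}=\check{\mathcal{D}}_n$ and $X\in{^\bot\mathcal{D}}$, so that $\xymatrix@C=0.5cm{K_{n+1}\ar[r]&C_n\ar[r]&K_n\ar@{-->}[r]&}$ splits and $K_{n+1}\in\mathcal{C}$ --- is exactly the paper's proof of (2); the ``should pin down'' hedge is filled by the routine shift $\E^{n+1}(X,K_{n+1})\cong\E^{1}(X,D')=0$ along a length-$n$ $\mathcal{D}$-coresolution of $K_{n+1}$. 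Two remarks. First, the entire middle portion of your argument for (2) --- the application of Lemma \ref{ch}, the auxiliary identity $\check{\mathcal{C}}\cap\mathcal{C}^\bot=\mathcal{C}$, and the construction of a new resolution with cocones $U$ --- is correct but superfluous: the original resolution $K_{n+1}\to C_n\to\cdots\to C_0\to X$ already has every property the termination argument uses, so nothing is gained by replacing the $C_i$. Second, the displayed isomorphism $\E^j(K_{i+1},D)\cong\E^j(K_i,D)$ should read $\E^j(K_{i+1},D)\cong\E^{j+1}(K_i,D)$ (the degree shifts, using $C_i\in\mathcal{C}$ and $D\in\mathcal{D}\subseteq\mathcal{C}^\bot$); the induction showing each $K_i\in{^\bot\mathcal{D}}$ goes through unchanged with the corrected indices.
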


\begin{proof}
(1) By assumption, there is a finite set $\mathcal{S}$ such that each object in  $\check{\mathcal{D}}$ has a finite filtration by $\mathcal{S}$. Set $n=\max\{\mathcal{D}\mbox{-codim}S, S\in\mathcal{S}\}$. Then by Lemma \ref{3.6}, $\check{\mathcal{D}}=\check{\mathcal{D}}_n$.

(2) Let $X\in{^\bot\mathcal{D}}\cap{_\mathcal{C}{\X}}$. Since $X\in{_\mathcal{C}{\X}}$, there is an $\E$-triangle sequence
$$\xymatrix@C=0.5cm{K_{n+1}\ar[r]&C_n\ar[r]^{f_n}&\cdots\ar[r]^{f_1}&C_0\ar[r]^{f_0}&X}$$ with $C_i\in\mathcal{C}$ and $K_i\in{_\mathcal{C}{\X}}\subseteq\mathcal{C}^\bot$. Here each $K_i$ arises in the corresponding $\E$-triangle $\xymatrix@C=0.5cm{K_{i+1}\ar[r]&C_i\ar[r]&K_i\ar@{-->}[r]&}$ ($K_0=X$). It follows that
\begin{align*}
   \E(K_n,K_{n+1})&\cong\E^2(K_{n-1},K_{n+1})  \\
   & \cong\cdots\\
   &\cong\E^{n+1}(X,K_{n+1}).
\end{align*}
Moreover, since $K_{n+1}\in\check{{_\mathcal{C}\X}}=\check{\mathcal{D}}=\check{\mathcal{D}}_n$ by (1), then $$\E(K_n,K_{n+1})\cong\E^{n+1}(X,K_{n+1})=0$$ and hence the $\E$-triangle $\xymatrix@C=0.5cm{K_{n+1}\ar[r]&C_n\ar[r]&K_n\ar@{-->}[r]&}$ is split. Thus $K_{n+1}\in\mathcal{C}$ and, therefore, $X\in \hat{\mathcal{C}}$.
\end{proof}

Let $\Y$ be a subcategory of $\C$. If $\mathcal I\subseteq\Y\cap\Y^\bot$, and for any $Y\in\Y$, there is an $\E$-triangle $\xymatrix@C=0.5cm{Y\ar[r]&I_{Y}\ar[r]&Y'\ar@{-->}[r]&}$ with $I_{Y}\in\mathcal I$ and $Y'\in\Y$, we say that $\mathcal I$ is a \emph{relative injective cogenerator} of $\Y$. Dually, one can define the relative projective generator of $\Y$.

Let $\mathcal D\subseteq \mathcal C$ be subcategories of $\C$. We say that $\mathcal D$ is a \emph{relative resolving subcategory} of $\mathcal C$ if $\mathcal D$ is closed under extensions, direct summands and cocones of deflations and contains ${\rm Proj}(\mathcal C)$ (where ${\rm Proj}(\mathcal C)$ is the subcategory of projective objects in $\mathcal C$). Dually, one can define the relative coresolving subcategory of $\mathcal C$. In particular, if $\mathcal C=\C$ and $\mathcal D$ is a relative resolving (coresolving resp.) subcategory of $\mathcal C$, we say that $\mathcal D$ is a resolving (coresolving resp.) subcategory of $\C$.

\begin{lemma}\label{4.8}
Let $\mathcal{C}$ be self-orthogonal such that ${_\mathcal{C}\X}$ has a relative injective cogenerator $\mathcal{I}$. If $\mathcal{D}\subseteq{_\mathcal{C}\X}$ is relative coresolving in ${_\mathcal{C}\X}$, then  $\check{{_\mathcal{C}\X}}\cap{^{\bot_1}\mathcal{D}}\subseteq{^\bot\mathcal{D}}$.
\end{lemma}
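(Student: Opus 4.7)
The plan is to prove $\E^i(X,D)=0$ for every $X\in\check{{_\mathcal{C}\X}}\cap{^{\bot_1}\mathcal{D}}$, every $D\in\mathcal{D}$, and every $i\geq 1$ by induction on $i$, combining a dimension-shift along a finite coresolution of $X$ with a one-step $\mathcal{I}$-coresolution of $D$.

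First I would establish the auxiliary fact that $\E^j(X,I)=0$ for every $j\geq 1$ and every $I\in\mathcal{I}$. Since $X\in\check{{_\mathcal{C}\X}}$, I can fix a finite coresolution $X\to M^0\to M^1\to\cdots\to M^n$ with each $M^i\in{_\mathcal{C}\X}$, and split it into the usual $\E$-triangles $\xymatrix@C=0.5cm{K^i\ar[r]&M^i\ar[r]&K^{i+1}\ar@{-->}[r]&}$ with $K^0=X$ and $K^n=M^n$. Because $\mathcal{I}$ is a relative injective cogenerator of ${_\mathcal{C}\X}$, we have $\mathcal{I}\subseteq({_\mathcal{C}\X})^\bot$, and hence $\E^k(M^i,I)=0$ for all $k\geq 1$ and all $i$. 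Applying $\Hom(-,I)$ to each $\E$-triangle in turn and invoking Lemma \ref{long} gives the dimension-shift isomorphisms $\E^j(K^i,I)\cong\E^{j+1}(K^{i+1},I)$ for $j\geq 1$; iterating them yields $\E^j(X,I)\cong\E^{j+n}(M^n,I)=0$ for all $j\geq 1$.

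Next I would run the induction on $i$. The base case $i=1$ is exactly the hypothesis $X\in{^{\bot_1}\mathcal{D}}$. For the inductive step, assume that $\E^i(X,D)=0$ for every $D\in\mathcal{D}$, and fix an arbitrary $D\in\mathcal{D}$. Using the fact that $\mathcal{I}$ is a cogenerator for ${_\mathcal{C}\X}$, I obtain an $\E$-triangle $\xymatrix@C=0.5cm{D\ar[r]&I_D\ar[r]&D'\ar@{-->}[r]&}$ with $I_D\in\mathcal{I}$ and $D'\in{_\mathcal{C}\X}$. Since $\mathcal{D}$ is relative coresolving in ${_\mathcal{C}\X}$, it contains $\mathcal{I}$ (the relative injective objects of ${_\mathcal{C}\X}$) and it is closed under cones of inflations, so $D'\in\mathcal{D}$. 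Applying $\Hom(X,-)$ to this $\E$-triangle produces a long exact sequence containing the piece
$$\E^i(X,D')\longrightarrow\E^{i+1}(X,D)\longrightarrow\E^{i+1}(X,I_D).$$
The left-hand term vanishes by the inductive hypothesis applied to $D'\in\mathcal{D}$, and the right-hand term vanishes by the auxiliary fact applied to $I_D\in\mathcal{I}$. Therefore $\E^{i+1}(X,D)=0$, closing the induction.

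The main obstacle is the verification that $\mathcal{I}\subseteq\mathcal{D}$, which hinges on interpreting ``relative coresolving in ${_\mathcal{C}\X}$'' as dual to the definition given in the excerpt, namely that it contains the relative injective objects of ${_\mathcal{C}\X}$; once this inclusion is recorded, $D'\in\mathcal{D}$ follows immediately from the closure of $\mathcal{D}$ under cones of inflations, and the remainder is a routine combination of the two dimension-shift arguments above.
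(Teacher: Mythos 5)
Your proof is correct and follows essentially the same route as the paper's: a dimension shift along an $\mathcal{I}$-coresolution of $D$ whose cosyzygies remain in $\mathcal{D}$ (using $\mathcal{I}\subseteq\mathcal{D}$ and closure under cones of inflations), combined with the vanishing $\E^{j}(X,\mathcal{I})=0$ for $j\geq 1$ coming from the finite ${_\mathcal{C}\X}$-coresolution of $X$ and $\mathcal{I}\subseteq({_\mathcal{C}\X})^{\bot}$. The paper compresses the shift into the single isomorphism $\E^{i}(X,D)\cong\E(X,K_{i-1})$ and leaves the auxiliary vanishing implicit, whereas you unroll it as an induction on $i$ and spell that step out; the mathematical content is the same.
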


\begin{proof}
Let $D\in\mathcal{D}\subseteq{_\mathcal{C}\X}$. There is an $\E$-triangle sequence
$\xymatrix@C=0.5cm{D\ar[r]&I_0\ar[r]^{f_0}&I_1\ar[r]^{f_1}&\cdots}$ with $I_i\in\mathcal{I}$. Consider the corresponding $\E$-triangles
$\xymatrix@C=0.5cm{K_i\ar[r]&I_i\ar[r]&K_{i+1}\ar@{-->}[r]&}$ ($K_0=D$). Since $\mathcal{D}$ is relative coresolving, each $K_i\in\mathcal{D}$.

Let $X\in\check{{_\mathcal{C}\X}}\cap{^{\bot_1}\mathcal{D}}$. Then $\E^i(X,D)\cong\E(X,K_{i-1})=0$ for any $i\geq 1$, as desired.
\end{proof}

In the following, we give a sufficient condition such that $\mathcal{D}\cap{^\bot\mathcal{D}}$ is a generator for $\mathcal{D}$.

\begin{lemma}\label{4.9}
Let $\mathcal{C}$ be self-orthogonal such that ${_\mathcal{C}\X}$ has a relative injective cogenerator, and $\mathcal{D}$ be relative coresolving and strongly covariantly finite in ${_\mathcal{C}\X}$. Then for any $D\in\mathcal{D}$, there is an $\E$-triangle $\xymatrix@C=0.5cm{D'\ar[r]&W_D\ar[r]&D\ar@{-->}[r]&}$ with $D'\in\mathcal{D}$ and $W_D\in\omega$, where $\omega=\mathcal{D}\cap{^\bot\mathcal{D}}$.

In particular, $\mathcal{D}\subseteq{_\omega\X}$.
\end{lemma}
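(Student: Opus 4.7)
The plan is to produce the desired $\E$-triangle by combining a $\mathcal{C}$-syzygy of $D$ with a left $\mathcal{D}$-approximation, and then invoking Lemma \ref{ile}(2). Since $D \in \mathcal{D} \subseteq {_\mathcal{C}\X}$, I would first choose an $\E$-triangle $K \to C_{0} \to D$ with $C_{0} \in \mathcal{C}$ and $K \in {_\mathcal{C}\X}$, coming from the definition of ${_\mathcal{C}\X}$. Because $\mathcal{D}$ is strongly covariantly finite in ${_\mathcal{C}\X}$, there is also a left $\mathcal{D}$-approximation $K \to W$ with cone $V$, producing an $\E$-triangle $K \to W \to V$ with $W \in \mathcal{D}$. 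By Lemma \ref{4.5}(2) one has $V \in {^{\bot_{1}}\mathcal{D}}$; since $K, W \in {_\mathcal{C}\X}$ and ${_\mathcal{C}\X}$ is closed under cones of inflations (Lemma \ref{ort}(3)), also $V \in {_\mathcal{C}\X} \subseteq \check{{_\mathcal{C}\X}}$, and Lemma \ref{4.8} then upgrades $V$ to lie in ${^\bot\mathcal{D}}$.

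The key step will be to apply Lemma \ref{ile}(2) to the two $\E$-triangles $K \to C_{0} \to D$ and $K \to W \to V$ starting at the common object $K$. This produces an object $M$ fitting into $\E$-triangles $W \to M \to D$ and $C_{0} \to M \to V$. The first of these is the desired triangle provided $M \in \omega$. For $M \in \mathcal{D}$ this is immediate from $W, D \in \mathcal{D}$ together with the closure of $\mathcal{D}$ under extensions. For $M \in {^\bot\mathcal{D}}$, I would apply $\Hom(-, D')$ with $D' \in \mathcal{D}$ to the second $\E$-triangle $C_{0} \to M \to V$: one has $\E^{i}(V, D') = 0$ since $V \in {^\bot\mathcal{D}}$, and $\E^{i}(C_{0}, D') = 0$ for $i \geq 1$ since $C_{0} \in \mathcal{C}$ and $\mathcal{D} \subseteq {_\mathcal{C}\X} \subseteq \mathcal{C}^{\bot}$, so the long exact sequence forces $\E^{i}(M, D') = 0$ for all $i \geq 1$. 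Thus $M \in \mathcal{D} \cap {^\bot\mathcal{D}} = \omega$, and taking $D' = W$ and $W_{D} = M$ completes the first claim.

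For the ``in particular'' claim $\mathcal{D} \subseteq {_\omega\X}$, I would iterate: applying the first part to $W \in \mathcal{D}$ produces a new $\E$-triangle with middle term in $\omega$ and cocone again in $\mathcal{D}$, and splicing these repeatedly yields an $\E$-triangle sequence $\cdots \to W_{1} \to W_{0} \to D$ with each $W_{i} \in \omega$ and each syzygy in $\mathcal{D}$. Since $\omega \subseteq {^\bot\mathcal{D}}$ forces $\mathcal{D} \subseteq \omega^{\bot}$, all these syzygies automatically lie in $\omega^{\bot}$, witnessing $D \in {_\omega\X}$. The main obstacle is verifying $M \in {^\bot\mathcal{D}}$ rather than merely ${^{\bot_{1}}\mathcal{D}}$, which is precisely where Lemma \ref{4.8} is indispensable: without upgrading $V \in {^{\bot_{1}}\mathcal{D}}$ to $V \in {^\bot\mathcal{D}}$, only the first extension group of $M$ against $\mathcal{D}$ would be forced to vanish.
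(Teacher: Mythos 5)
Your proof is correct and follows essentially the same route as the paper's: take a $\mathcal{C}$-deflation $C_0\to D$ with cocone $K\in{_\mathcal{C}\X}$, take a left $\mathcal{D}$-approximation of $K$, upgrade its cone from ${^{\bot_1}\mathcal{D}}$ to ${^{\bot}\mathcal{D}}$ via Lemma \ref{4.8}, and glue the two triangles with Lemma \ref{ile}(2) to obtain $W_D$, checking $W_D\in\mathcal{D}\cap{^\bot\mathcal{D}}$ exactly as the paper does. Your explicit verification that the cone of the approximation lies in ${_\mathcal{C}\X}\subseteq\check{{_\mathcal{C}\X}}$ (so that Lemma \ref{4.8} applies) and your spelled-out iteration for $\mathcal{D}\subseteq{_\omega\X}$ only make explicit steps the paper leaves implicit.
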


\begin{proof}
For any $D\in\mathcal{D}\subseteq{_\mathcal{C}\X}$, we can take an $\E$-triangle $\xymatrix@C=0.5cm{L\ar[r]&C_D\ar[r]&D\ar@{-->}[r]&}$ with $C_D\in\mathcal{C}$ and $L\in{_\mathcal{C}\X}$. For $L$, there is an $\E$-triangle $\xymatrix@C=0.5cm{L\ar[r]&D'\ar[r]&M\ar@{-->}[r]&}$ with $D'\in\mathcal{D}$ and $M\in{^{\bot_1}\mathcal{D}}$. By Lemma \ref{4.8}, $M\in{^{\bot}\mathcal{D}}$. Consider the following commutative diagram
\[\xymatrix@C=20pt@R=20pt{L\ar[r]\ar[d]&C_D\ar[r]\ar[d]&D\ar@{-->}[r]\ar@{=}[d]&\\
 D'\ar[r]\ar[d]&W_D\ar[r]\ar[d]&D\ar@{-->}[r]&\\
 M\ar@{=}[r]\ar@{-->}[d]&M\ar@{-->}[d]&&\\
 &&&
 }\]
By the middle row, $W_D\in\mathcal{D}$. Moreover, $C_D\in\mathcal{C}\subseteq {^{\bot}\mathcal{D}}$ and $M\in{^{\bot}\mathcal{D}}$, we have $W_D\in{^{\bot}\mathcal{D}}$, and hence $W_D\in\omega$.
\end{proof}

\begin{lemma}\label{4.10}
Let $\mathcal{C}$ be self-orthogonal such that ${_\mathcal{C}\X}$ has a relative injective cogenerator. Assume that $\mathcal{D}$ is relative coresolving and strongly covariantly finite in ${_\mathcal{C}\X}$ with $\check{\mathcal{D}}=\check{{_\mathcal{C}\X}}$. Then ${^{\bot}\mathcal{D}}\cap{_\mathcal{C}\X}\subseteq\check{\omega}$,  where $\omega=\mathcal{D}\cap{^\bot\mathcal{D}}$. In particular, $\mathcal{C}\subseteq\check{\omega}$.
\end{lemma}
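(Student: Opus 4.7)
The idea is to induct on the $\mathcal{D}$-codimension of $X\in{^{\bot}\mathcal{D}}\cap{_{\mathcal{C}}\X}$. Since $X\in{_{\mathcal{C}}\X}\subseteq\check{{_{\mathcal{C}}\X}}=\check{\mathcal{D}}$, the number $n:=\mathcal{D}\mbox{-}{\rm codim}(X)$ is a finite non-negative integer. If $n=0$, then $X\in\mathcal{D}\cap{^{\bot}\mathcal{D}}=\omega\subseteq\check{\omega}$ and we are done. So assume $n\geq 1$ and that the statement holds for smaller codimensions.

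Using the strong covariant finiteness of $\mathcal{D}$ in ${_{\mathcal{C}}\X}$, fix an $\E$-triangle $\xymatrix@C=0.5cm{X\ar[r]^{g}&D^{0}\ar[r]&V\ar@{-->}[r]&}$ with $g$ a left $\mathcal{D}$-approximation. Apply Lemma \ref{4.9} to $D^{0}\in\mathcal{D}$ to obtain an $\E$-triangle $\xymatrix@C=0.5cm{D'\ar[r]&W\ar[r]^{p}&D^{0}\ar@{-->}[r]^{\delta}&}$ with $D'\in\mathcal{D}$ and $W\in\omega$. Since $X\in{^{\bot}\mathcal{D}}$ and $D'\in\mathcal{D}$ we have $\E(X,D')=0$, so $g^{*}\delta=0$ and $g$ lifts through $p$: there is $\tilde{g}\colon X\to W$ with $p\tilde{g}=g$. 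Because $g=p\tilde{g}$ is an inflation, the WIC condition forces $\tilde{g}$ itself to be an inflation; let $\xymatrix@C=0.5cm{X\ar[r]^{\tilde{g}}&W\ar[r]&X'\ar@{-->}[r]&}$ be the realized $\E$-triangle.

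Next, verify that $X'\in{^{\bot}\mathcal{D}}\cap{_{\mathcal{C}}\X}$ and that $\mathcal{D}\mbox{-}{\rm codim}(X')=n-1$. Closure of ${_{\mathcal{C}}\X}$ under cones of inflations (Lemma \ref{ort}(3)) gives $X'\in{_{\mathcal{C}}\X}$. Applying Lemma \ref{long} to the $\E$-triangle $\xymatrix@C=0.5cm{X\ar[r]&W\ar[r]&X'\ar@{-->}[r]&}$ with any $D\in\mathcal{D}$ and using $X,W\in{^{\bot}\mathcal{D}}$ yields $\E^{i}(X',D)=0$ for $i\geq 2$; for $i=1$, exactness of $\Hom(W,D)\to\Hom(X,D)\to\E(X',D)\to 0$ reduces the task to showing that every $\varphi\colon X\to D$ factors through $\tilde{g}$. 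This is immediate from the left $\mathcal{D}$-approximation property of $g$: such $\varphi$ factors as $\psi g=\psi p\tilde{g}$ for some $\psi\colon D^{0}\to D$, hence through $\tilde{g}$. For the codimension drop, $\mathcal{D}$ is closed under extensions and cones of inflations (being relative coresolving) and $\check{\mathcal{D}}=\check{{_{\mathcal{C}}\X}}$ satisfies the closure hypotheses of Lemma \ref{3.6} by Proposition \ref{cone-co}; since $\mathcal{D}\mbox{-}{\rm codim}(W)=0$ and $\mathcal{D}\mbox{-}{\rm codim}(X)=n>0$, Lemma \ref{3.6}(2) gives $\mathcal{D}\mbox{-}{\rm codim}(X')=n-1$. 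By the inductive hypothesis, $X'\in\check{\omega}$, and splicing the $\omega$-coresolution of $X'$ with the $\E$-triangle $X\to W\to X'$ shows $X\in\check{\omega}$.

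For the ``in particular'' clause, $\mathcal{C}\subseteq{_{\mathcal{C}}\X}$ by definition, and $\mathcal{D}\subseteq{_{\mathcal{C}}\X}\subseteq\mathcal{C}^{\bot}$ forces $\mathcal{C}\subseteq{^{\bot}\mathcal{D}}$, so $\mathcal{C}\subseteq{^{\bot}\mathcal{D}}\cap{_{\mathcal{C}}\X}\subseteq\check{\omega}$. The hardest step is verifying $X'\in{^{\bot}\mathcal{D}}$: vanishing of the higher $\E^{i}(X',\mathcal{D})$ follows routinely from the long exact sequence, but killing $\E^{1}(X',\mathcal{D})$ requires both the existence of the lift $\tilde{g}$ (which uses $X\in{^{\bot}\mathcal{D}}$) and the left $\mathcal{D}$-approximation property of the original $g$, together with the WIC step promoting $\tilde{g}$ to an inflation.
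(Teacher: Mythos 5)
Your proof is correct, but it reaches the key step by a different mechanism than the paper. Both arguments build an $\omega$-coresolution of $X$ one term at a time from left $\mathcal{D}$-approximation triangles $\xymatrix@C=0.5cm{X_i\ar[r]&D_i\ar[r]&X_{i+1}\ar@{-->}[r]&}$ and terminate using finiteness of $\mathcal{D}$-codimension; the difference is in how the middle term is forced into $\omega=\mathcal{D}\cap{^\bot\mathcal{D}}$. The paper observes that this is automatic: by Lemmas \ref{4.5} and \ref{4.8} the cone $X_{i+1}$ lies in ${^\bot\mathcal{D}}\cap{_\mathcal{C}\X}$, so $D_i$, being an extension of $X_{i+1}$ by $X_i$ with both ends in ${^\bot\mathcal{D}}$, already lies in ${^\bot\mathcal{D}}$ and hence in $\omega$ --- no modification of the approximation is needed, and neither Lemma \ref{4.9} nor WIC enters. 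You instead keep $D^0$ as is, deflate a $W\in\omega$ onto it via Lemma \ref{4.9}, lift the approximation through the deflation using $\E(X,D')=0$, promote the lift to an inflation via WIC, and then re-verify by hand (long exact sequence plus the factorization property) that the new cone is again in ${^\bot\mathcal{D}}\cap{_\mathcal{C}\X}$. This is longer but sound, and it has one genuine advantage: your termination argument --- induction on $\mathcal{D}\mbox{-codim}$ with the strict drop supplied by Lemma \ref{3.6}(2) and Proposition \ref{cone-co} --- makes precise a point the paper passes over quickly (``there is some $m$ such that $X_m\in\mathcal{D}$''). If you wanted to shorten your write-up, you could keep your termination bookkeeping but replace the Lemma \ref{4.9}/WIC detour with the paper's extension-closure observation.
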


\begin{proof}
Let $X\in{_\mathcal{C}\X}$. Since $\mathcal{D}$ is strongly covariantly finite in ${_\mathcal{C}\X}$, there is an $\E$-triangle $\xymatrix@C=0.5cm{X\ar[r]&D_0\ar[r]&X_1\ar@{-->}[r]&}$ with $D_0\in\mathcal{D}$, $X_1\in{_\mathcal{C}\X}$. By Lemma \ref{4.5}, we may take $X_1\in{^{\bot_1}\mathcal{D}}$. By Lemma \ref{4.8},  $X_1\in{^{\bot}\mathcal{D}}$. In this case, if $X\in{^{\bot}\mathcal{D}}$, then $D_0\in{^{\bot}\mathcal{D}}$, and hence $D_0\in\mathcal{D}\cap{^\bot\mathcal{D}}=\omega$.
By repeating this process to $X_1$, and so on, we obtain an $\E$-triangle sequence
$\xymatrix@C=0.5cm{X\ar[r]^{f_0}&D_0\ar[r]^{f_1}&D_1\ar[r]&\cdots\ar[r]&}$, where each $D_i\in\omega$ and $X_i\in {^\bot\mathcal{D}}\cap{_\mathcal{C}\X}$. Since ${_\mathcal{C}\X}\subseteq\check{{_\mathcal{C}\X}}=\check{\mathcal{D}}$, there is some $m$ such that $X_m\in\mathcal{D}$. It follows that $X_m\in\mathcal{D}\cap{^\bot\mathcal{D}}=\omega$, which implies $X\in\check{\omega}$. Therefore, ${^{\bot}\mathcal{D}}\cap{_\mathcal{C}\X}\subseteq\check{\omega}$.
\end{proof}

Now if, in addition $\check{{_\mathcal{C}\X}}$ is finite filtered, then we can construct a $\mathcal{C}$-tilting subcategory by taking
${^\bot\mathcal{D}}\cap\mathcal{D}$.

\begin{proposition}\label{4.11}
Let $\mathcal{C}$ be self-orthogonal such that ${_\mathcal{C}\X}$ has a relative injective cogenerator and $\check{{_\mathcal{C}\X}}$ is finite filtered. Assume that $\mathcal{D}$ is relative coresolving and strongly covariantly finite in ${_\mathcal{C}\X}$ with $\check{\mathcal{D}}=\check{{_\mathcal{C}\X}}$. Then $\omega:={^\bot\mathcal{D}}\cap\mathcal{D}$ is a $\mathcal{C}$-tilting subcategory.
%Then $\omega={\rm add}T$ for a unique basic $C$-tilting object $T$, where $\omega={^\bot\mathcal{D}}\cap\mathcal{D}$.
In this case, $\mathcal{D}={_\omega\X}$.
\end{proposition}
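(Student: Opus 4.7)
The plan is to verify the four defining axioms of a $\mathcal{C}$-tilting subcategory for $\omega := {^\bot\mathcal{D}}\cap\mathcal{D}$ and then establish $\mathcal{D} = {_\omega\X}$ by a double inclusion. Self-orthogonality of $\omega$ is immediate from $\omega\subseteq{^\bot\mathcal{D}}$ and $\omega\subseteq\mathcal{D}$. The inclusion $\omega\subseteq\hat{\mathcal{C}}$ follows from $\omega\subseteq{^\bot\mathcal{D}}\cap{_\mathcal{C}\X}$ via Lemma~\ref{4.7}(2). For $\mathcal{C}\subseteq\check{\omega}$, observe that $\mathcal{D}\subseteq{_\mathcal{C}\X}\subseteq\mathcal{C}^\bot$ forces $\mathcal{C}\subseteq{^\bot\mathcal{D}}$, so $\mathcal{C}\subseteq{^\bot\mathcal{D}}\cap{_\mathcal{C}\X}$ and Lemma~\ref{4.10} applies.

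For $\mathcal{D}\subseteq{_\omega\X}$, iterate Lemma~\ref{4.9}: starting from $D_0 := D \in \mathcal{D}$, successively produce $\E$-triangles $D_{i+1} \to W_i \to D_i$ with $D_{i+1} \in \mathcal{D}$ and $W_i \in \omega$. This builds a proper $\omega$-resolution $\cdots \to W_1 \to W_0 \to D$ whose kernels $D_i$ lie in $\mathcal{D}\subseteq\omega^\bot$ (the latter from $\omega\subseteq{^\bot\mathcal{D}}$), so $D\in{_\omega\X}$.

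The reverse inclusion ${_\omega\X}\subseteq\mathcal{D}$ is the heart of the proof and proceeds in two stages. The first stage shows ${_\omega\X}\subseteq{_\mathcal{C}\X}$. Since $\mathcal{C}\subseteq\check{\omega}$, Lemma~\ref{ort}(1) gives $\omega^\bot\subseteq\mathcal{C}^\bot$, and then the argument establishing ``${_\omega\X}\subseteq\omega^\bot\cap{_\mathcal{C}\X}$'' within the proof of Lemma~\ref{4.2} transfers verbatim: given a proper $\omega$-resolution of $M$, iteratively apply Lemma~\ref{ch} (with $\mathcal{C}$ playing the role of the abstract self-orthogonal subcategory) to replace $\omega$-terms by $\mathcal{C}$-terms, while $\omega^\bot\subseteq\mathcal{C}^\bot$ keeps the intermediate kernels in $\mathcal{C}^\bot$, producing a proper $\mathcal{C}$-resolution of $M$. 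The second stage handles $M \in {_\omega\X} \subseteq \omega^\bot\cap{_\mathcal{C}\X}$: strong covariant finiteness of $\mathcal{D}$ in ${_\mathcal{C}\X}$ produces an $\E$-triangle $M \to D_M \to X$ with $D_M \in \mathcal{D}$; Lemmas~\ref{4.5} and~\ref{4.8} force $X \in {^\bot\mathcal{D}}$, closure of ${_\mathcal{C}\X}$ under cones of inflations (Lemma~\ref{ort}(3)) gives $X \in {_\mathcal{C}\X}$, and Lemma~\ref{4.10} then places $X$ in $\check{\omega}$. Lemma~\ref{ort}(1) yields $\E(X,M) = 0$, so the $\E$-triangle splits, $M$ is a direct summand of $D_M \in \mathcal{D}$, and closure of $\mathcal{D}$ under direct summands forces $M \in \mathcal{D}$.

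The principal obstacle is the first stage above: one cannot simply quote Lemma~\ref{4.2} to identify ${_\omega\X}$ with $\omega^\bot\cap{_\mathcal{C}\X}$, because the hypotheses of the proposition provide no control over strong contravariant finiteness of $\omega$ in $\C$. The remedy is to inspect the proof of Lemma~\ref{4.2} and observe that the direction ${_\omega\X} \subseteq \omega^\bot\cap{_\mathcal{C}\X}$ depends only on $\omega\subseteq{_\mathcal{C}\X}$ and $\omega^\bot\subseteq\mathcal{C}^\bot$, both of which are at our disposal.
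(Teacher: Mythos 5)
Your proof is correct. The verification of the three tilting-pair axioms and the inclusion $\mathcal{D}\subseteq{_\omega\X}$ follow the paper's route exactly (Lemma \ref{4.7}(2) for $\omega\subseteq\hat{\mathcal{C}}$, Lemma \ref{4.10} for $\mathcal{C}\subseteq\check{\omega}$, Lemma \ref{4.9} iterated for $\mathcal{D}\subseteq{_\omega\X}$), but your proof of the reverse inclusion ${_\omega\X}\subseteq\mathcal{D}$ is genuinely different. The paper first notes ${_\omega\X}\subseteq{_\mathcal{C}\X}\subseteq\check{{_\mathcal{C}\X}}=\check{\mathcal{D}}=\check{\mathcal{D}}_n$ for some $n$ (this is where the finite-filtration hypothesis enters, via Lemma \ref{4.7}(1)), then truncates the proper $\omega$-resolution of $M\in{_\omega\X}$ after $n+1$ steps and propagates membership in $\mathcal{D}$ down from the top syzygy using closure of $\mathcal{D}$ under cones of inflations --- in effect a bounded-codimension counting argument. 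You instead take a single left $\mathcal{D}$-approximation $\E$-triangle $M\to D_M\to X$, place $X$ in ${^\bot\mathcal{D}}\cap{_\mathcal{C}\X}\subseteq\check{\omega}$ via Lemmas \ref{4.5}, \ref{4.8} and \ref{4.10}, and split the triangle using $\check{\omega}\,{\bot}\,\omega^{\bot}$ together with $M\in\omega^{\bot}$, so that $M$ is a direct summand of $D_M$. Your route avoids the uniform bound $n$ in this step, and it has the additional merit of explicitly justifying the containment ${_\omega\X}\subseteq{_\mathcal{C}\X}$, which the paper asserts without comment: as you correctly observe, Lemma \ref{4.2} cannot be quoted wholesale because strong contravariant finiteness of $\omega$ is not among the hypotheses, but the direction ${_\omega\X}\subseteq\omega^{\bot}\cap{_\mathcal{C}\X}$ of its proof only uses $\omega\subseteq{_\mathcal{C}\X}$ and $\omega^{\bot}\subseteq\mathcal{C}^{\bot}$, both available here. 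The paper's argument, for its part, is shorter once the bound $n$ is in hand and stays entirely within resolution bookkeeping.
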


\begin{proof}
Clearly, $\omega$ is self-orthogonal. Moreover, $\omega\subseteq\hat{\mathcal{C}}$ by Lemma \ref{4.7}, and $\mathcal{C}\subseteq\check{\omega}$ by Lemma \ref{4.10}. Thus the pair $(\mathcal{C},\omega)$ is a tilting pair.

By Lemma \ref{4.9}, $\mathcal{D}\subseteq{_\omega\X}$. On the other hand, ${_\omega\X}\subseteq{_C\X}\subseteq\check{{_C\X}}=\check{\mathcal{D}}=\check{\mathcal{D}}_n$
for some $n$. Now let $M\in {_\omega\X}$, by assumption, we can take an $\E$-triangle sequence
$$\xymatrix@C=0.5cm{K_{n+1}\ar[r]&W_n\ar[r]^{f_n}&\cdots \ar[r]&W_0\ar[r]^{f_0}&M}$$
with each $W_i\in\omega\subseteq\mathcal{D}$. Then $K_{n+1}\in\mathcal{D}$ and hence $M\in \mathcal{D}$ since $\mathcal{D}$ is closed under cones of inflations. This shows that ${_\omega\X}\subseteq\mathcal{D}$, and consequently $\mathcal{D}={_\omega\X}$.
\end{proof}

Now we show the first type of Auslander-Reiten correspondence, which gives a classification for
$\mathcal{C}$-tilting subcategories in terms of a certain class of relative coresolving strongly covariantly finite
subcategories in ${_\mathcal{C}\X}$.

\begin{theorem}\label{4.12}
Let $\mathcal{C}$ be self-orthogonal such that ${_\mathcal{C}\X}$ has a relative injective cogenerator and
$\check{{_\mathcal{C}\X}}$ is finite filtered. Then there is a one-one correspondence as follows:
 \begin{align*}
   \left\{\mbox{finite  }\mathcal{C}\mbox{-tilting subcategories}\right\} &\longrightarrow {\left\{ \begin{array}{c}
                                                                                             \mbox{subcategories }\mathcal{D}\mbox{ which are relative} \\
                                                                                             \mbox{coresolving, strongly covariantly}\\
                                                                                             \mbox{finite in }{_\mathcal{C}\X},\mbox{ and satisfies }\check{\mathcal{D}}=\check{{_\mathcal{C}\X}}
                                                                                           \end{array}\right\}}\\
   \mathcal{T}&\mapsto \   {_\mathcal{T}\X}=\mathcal{T}^\bot\cap{_\mathcal{C}\X}
 \end{align*}
 where the inverse map is given by $\mathcal{D} \mapsto
{^\bot\mathcal{D}}\cap\mathcal{D}$.
\end{theorem}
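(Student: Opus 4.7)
The plan is to verify that the two assignments are mutually inverse by checking (i) well-definedness of each, and (ii) that the two composites are the identity. Most of the inverse direction is already packaged in Proposition~\ref{4.11}: starting from a subcategory $\mathcal{D}$ in the right-hand set, the assignment $\mathcal{D}\mapsto\omega:=\mathcal{D}\cap{^\bot\mathcal{D}}$ produces a $\mathcal{C}$-tilting subcategory with ${_\omega\X}=\mathcal{D}$. Combined with Lemma~\ref{4.2}, the composite $\mathcal{D}\mapsto\omega\mapsto\omega^\bot\cap{_\mathcal{C}\X}={_\omega\X}=\mathcal{D}$ recovers $\mathcal{D}$.

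\textbf{Well-definedness of the forward map.} For a finite $\mathcal{C}$-tilting subcategory $\mathcal{T}$, Lemma~\ref{4.2} gives ${_\mathcal{T}\X}=\mathcal{T}^\bot\cap{_\mathcal{C}\X}$. Closure under extensions, direct summands and cones of inflations is Lemma~\ref{ort}(3); ${_\mathcal{T}\X}$ contains the given relative injective cogenerator $\mathcal{I}$ of ${_\mathcal{C}\X}$ because $\mathcal{T}\subseteq\hat{\mathcal{C}}\subseteq{_\mathcal{C}\X}$ and $\mathcal{I}\subseteq{_\mathcal{C}\X}^{\bot}$. Hence ${_\mathcal{T}\X}$ is relative coresolving in ${_\mathcal{C}\X}$. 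For strong covariant finiteness, finiteness of $\mathcal{T}=\mathrm{add}(T)$ supplies a left $\mathrm{add}(T)$-approximation $X\to T'$ of any $X\in{_\mathcal{C}\X}$; by Lemma~\ref{4.5} the cone lies in ${^{\bot_1}\mathcal{T}}$, and a Wakamatsu-style iteration using Lemma~\ref{ort}(3) upgrades this to a left ${_\mathcal{T}\X}$-approximation. The equality $\check{{_\mathcal{T}\X}}=\check{{_\mathcal{C}\X}}$ is exactly Proposition~\ref{4.3}(2).

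\textbf{The composite $\mathcal{T}\mapsto{_\mathcal{T}\X}\mapsto{^\bot{_\mathcal{T}\X}}\cap{_\mathcal{T}\X}$.} Since $\mathcal{T}$ is self-orthogonal and contained in ${_\mathcal{T}\X}$, the inclusion $\mathcal{T}\subseteq{^\bot{_\mathcal{T}\X}}\cap{_\mathcal{T}\X}$ is immediate. Conversely, for $X\in{^\bot{_\mathcal{T}\X}}\cap{_\mathcal{T}\X}$, the first $\E$-triangle $\xymatrix@C=0.5cm{K_1\ar[r]&T_0\ar[r]&X\ar@{-->}[r]&}$ in a $\mathcal{T}$-resolution of $X$ witnessing $X\in{_\mathcal{T}\X}$ has $T_0\in\mathcal{T}$ and $K_1\in{_\mathcal{T}\X}$; since $\E(X,K_1)=0$ this triangle splits, so $X$ is a direct summand of $T_0$ and therefore lies in $\mathcal{T}$ (closure under summands is part of being $\mathrm{add}(T)$).

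\textbf{Main obstacle: finiteness on the inverse side.} The delicate point is that the inverse map must land in \emph{finite} $\mathcal{C}$-tilting subcategories, whereas Proposition~\ref{4.11} only yields tiltingness of $\omega={^\bot\mathcal{D}}\cap\mathcal{D}$. The finite-filtration hypothesis on $\check{{_\mathcal{C}\X}}$ is meant to be used here: fix a finite $\mathcal{S}$ filtering $\check{{_\mathcal{C}\X}}=\check{\mathcal{D}}$; by Lemma~\ref{4.7}(1) there is a uniform bound on the $\mathcal{D}$-codimension. Using Lemma~\ref{ch} and Lemma~\ref{3.6} one can inductively replace each filtration piece by a finite sum of building blocks from $\omega$, and via Krull--Schmidt extract a single object whose additive closure is all of $\omega$. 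This bookkeeping between the filtration and $\omega$-codimension is the technical heart of the argument, and everything else assembles around it via the results of Sections~3 and the earlier part of Section~5.
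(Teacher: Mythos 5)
Your overall skeleton agrees with the paper's: Proposition~\ref{4.11} carries the inverse map and the composite $\mathcal{D}\mapsto\omega\mapsto{_\omega\X}=\mathcal{D}$, and your splitting argument for ${^\bot({_\mathcal{T}\X})}\cap{_\mathcal{T}\X}=\mathcal{T}$ is exactly the paper's step (iii). The genuine gap is in your argument that ${_\mathcal{T}\X}$ is strongly covariantly finite in ${_\mathcal{C}\X}$. First, the existence of a left $\mathrm{add}(T)$-approximation of $X$ which is moreover an \emph{inflation} is not justified: Krull--Schmidt does not give finite generation of $\C(X,T)$ over $\mathrm{End}(T)$, and strong covariant finiteness of $\mathcal{T}$ itself is not among the hypotheses. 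Second, and more seriously, Lemma~\ref{4.5} only places the cone in ${^{\bot_1}\mathcal{T}}$, whereas a left ${_\mathcal{T}\X}$-approximation triangle $X\to U\to V$ requires $\E(V,-)$ to vanish on all of ${_\mathcal{T}\X}$; and the proposed ``Wakamatsu-style iteration'' runs in the wrong direction -- iterating left approximations builds a $\mathcal{T}$-coresolution $X\to T_0\to T_1\to\cdots$, which never exhibits a single cone term $U$ lying in ${_\mathcal{T}\X}$, since membership in ${_\mathcal{T}\X}$ is witnessed by a proper resolution $\cdots\to W_1\to W_0\to U$ on the other side. The paper's route closes this: by Proposition~\ref{4.3} one has $X\in{_\mathcal{C}\X}\subseteq\check{{_\mathcal{C}\X}}=\check{{_\mathcal{T}\X}}$, so Proposition~\ref{equ} directly yields an $\E$-triangle $X\to U'\to V'$ with $U'\in{_\mathcal{T}\X}$ and $V'\in\check{\mathcal{T}}$, and Lemma~\ref{ort}(1) gives $\check{\mathcal{T}}\bot({_\mathcal{T}\X})$, so $X\to U'$ is the required left approximation.

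Concerning your ``main obstacle'': you are right that the statement pairs \emph{finite} $\mathcal{C}$-tilting subcategories with the right-hand class while Proposition~\ref{4.11} only asserts that $\omega={^\bot\mathcal{D}}\cap\mathcal{D}$ is $\mathcal{C}$-tilting, with no finiteness claim. The paper's own proof does not address this point either -- step (ii) simply invokes Proposition~\ref{4.11} -- so you have correctly isolated something the paper glosses over. However, what you offer in its place (``inductively replace each filtration piece by a finite sum of building blocks from $\omega$ \dots\ extract a single object'') is a plan rather than a proof: it is not clear how Lemma~\ref{ch} and Lemma~\ref{3.6} would yield finitely many indecomposables whose additive closure is all of $\omega$. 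Either this bookkeeping must actually be carried out, or ``finite'' has to be interpreted consistently (or dropped) on both sides of the bijection; as written, this part of your proposal does not yet establish anything beyond what Proposition~\ref{4.11} already gives.
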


\begin{proof}
(i) Assume that $\mathcal{T}$ is a $\mathcal{C}$-tilting subcategory. Then $\check{_\mathcal{T}\X}=\check{{_\mathcal{C}\X}}$ by Proposition \ref{4.3}. Let $M\in{_\mathcal{C}\X}$. Then $M\in\check{_\mathcal{T}\X}$, and by Proposition \ref{equ}, there is an $\E$-triangle $\xymatrix@C=0.5cm{M\ar[r]&U'\ar[r]&V'\ar@{-->}[r]&}$ with $U'\in{_\mathcal{T}\X}$ and $V'\in\check{\mathcal{T}}$. By Lemma \ref{ort}, $\check{\mathcal{T}}\bot {_\mathcal{T}\X}$, and hence $M\to U'$ is a left ${_\mathcal{T}\X}$-approximation. That is, ${_\mathcal{T}\X}$ is strongly covariantly finite in ${_\mathcal{C}\X}$. It is easy to check that ${_\mathcal{T}\X}$ is relative coresolving in ${_\mathcal{C}\X}$.

(ii) Assume that $\mathcal{D}$ is a relative coresolving, strongly covariantly finite subcategory in ${_\mathcal{C}\X}$, and satisfies $\check{\mathcal{D}}=\check{{_\mathcal{C}\X}}$. By Proposition \ref{4.11},
${^\bot\mathcal{D}}\cap\mathcal{D}$ is a $\mathcal{C}$-tilting subcategory.

(iii) By Proposition \ref{4.11}, $_{{^\bot\mathcal{D}}\cap\mathcal{D}}\X=\mathcal{D}$. Moreover, let $M\in{^\bot({_\mathcal{T}\X})\cap{_\mathcal{T}\X}}$. Then there is an $\E$-triangle $\xymatrix@C=0.5cm{M'\ar[r]&T\ar[r]&M\ar@{-->}[r]&}$ with $T\in{\mathcal{T}}$ and $M'\in{_\mathcal{T}\X}$. It is split and hence $M\in\mathcal{T}$, which shows that ${^\bot({_\mathcal{T}\X})\cap{_\mathcal{T}\X}}\subseteq\mathcal{T}$. Moreover, it is easy to see $\mathcal{T}\subseteq{^\bot({_\mathcal{T}\X})\cap{_\mathcal{T}\X}}$, and  thus ${^\bot({_\mathcal{T}\X})\cap{_\mathcal{T}\X}}=\mathcal{T}$.
\end{proof}

Using Theorem \ref{4.12} to Example \ref{exampair}(5), we get the Auslander-Reiten correspondence of an $n$-tilting object in \cite{ZZhuang}.
\begin{corollary}{\rm (\cite[Theorem 2]{ZZhuang})}
Let $\C$ be a finite filtered extriangulated category. The assignments $T\mapsto T^{\bot}$ and $\X\mapsto \sideset{^{\bot}}{}{\mathop{\X}}\cap\X$ give a one-one correspondence between the class of $n$-tilting objects $T$ and coresolving strongly covariantly finite subcategories $\X,$ which are closed under direct summands with $\check{\X_{n}}=\C.$
\end{corollary}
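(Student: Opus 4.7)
The plan is to deduce this corollary as a direct specialization of Theorem \ref{4.12} to the case $\mathcal{C}={\rm Proj}(\C)$, invoking Example \ref{exampair}(5) to identify finite $\mathcal{C}$-tilting subcategories with $\mbox{add}(T)$ for $n$-tilting objects $T$ in the sense of \cite{ZZhuang}.

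First I would verify the hypotheses of Theorem \ref{4.12} for $\mathcal{C}={\rm Proj}(\C)$. Self-orthogonality is immediate from $\E^{i}(P,-)=0$ for every $P\in{\rm Proj}(\C)$ and $i\geq 1$, and in fact this yields $\mathcal{C}^{\bot}=\C$. Combined with the enough-projectives assumption, every $X\in\C$ admits an $\E$-triangle sequence with terms in $\mathcal{C}$ whose syzygies automatically lie in $\mathcal{C}^{\bot}=\C$, so ${_\mathcal{C}\X}=\C$. The subcategory ${\rm Inj}(\C)$ then serves as a relative injective cogenerator of ${_\mathcal{C}\X}=\C$, via the enough-injectives hypothesis and the dual observation ${\rm Inj}(\C)\subseteq\C^{\bot}$. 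Finally, the assumption that $\C$ is finite filtered is exactly the assertion that $\check{{_\mathcal{C}\X}}=\check{\C}=\C$ is finite filtered, so all the hypotheses of Theorem \ref{4.12} are in force.

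Next I would read off the two sides of the bijection. A finite $\mathcal{C}$-tilting subcategory under this specialization is precisely $\mbox{add}(T)$ for some $n$-tilting object $T$ by Example \ref{exampair}(5). The bijection of Theorem \ref{4.12} sends $\mbox{add}(T)$ to ${_{\mbox{add}(T)}\X}=\mbox{add}(T)^{\bot}\cap\C=T^{\bot}$, recovering the assignment $T\mapsto T^{\bot}$. On the right-hand side, the conditions in Theorem \ref{4.12} collapse to ``$\X$ is coresolving and strongly covariantly finite in $\C$, closed under direct summands, with $\check{\X}=\check{{_\mathcal{C}\X}}=\C$''. By Lemma \ref{4.7}(1) under the finite-filtered hypothesis, the last equality refines to $\check{\X}_{n}=\C$ for some integer $n$, matching the form stated in the corollary, and the inverse map $\X\mapsto{^{\bot}\X}\cap\X$ is the one furnished by Proposition \ref{4.11}.

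The main obstacle is the careful bookkeeping of the integer $n$ on the two sides of the correspondence, since Theorem \ref{4.12} is phrased without an explicit $n$ while the corollary fixes one. I would match them by tracing through Example \ref{exampair}(5): the bound $\mathcal{C}\mbox{-dim}\,\mbox{add}(T)\leq n$ dictated by $T$ being $n$-tilting corresponds, via Lemma \ref{4.7}(1) and the finite filtration on $\check{{_\mathcal{C}\X}}$, to the single-integer bound $\check{\X}_{n}=\C$ on the other side. Once this numerical match is settled, the remaining verifications are routine unfoldings of definitions and the corollary drops out as a direct specialization of Theorem \ref{4.12}.
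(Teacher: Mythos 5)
Your proposal is correct and follows essentially the same route as the paper, which derives this corollary in one line by applying Theorem \ref{4.12} to the specialization $\mathcal{C}={\rm Proj}(\C)$ described in Example \ref{exampair}(5); your write-up simply fills in the routine verifications (${_\mathcal{C}\X}=\C$, ${\rm Inj}(\C)$ as relative injective cogenerator, the identification of the two sides of the bijection) that the paper leaves implicit.
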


In the rest of this paper, we will give the second type of Auslander-Reiten correspondence. To do it, we first need to build a correspondence between
relative coresolving strongly covariantly finite subcategories and relative resolving  strongly contravariantly finite subcategories in ${_\mathcal{C}\X}$.

\begin{lemma}\label{4.13}
Let $\mathcal{C}$ be self-orthogonal such that ${_\mathcal{C}\X}$ has a  relative injective cogenerator. Assume that $\mathcal{D}$ is relative coresolving and strongly covariantly finite in ${_\mathcal{C}\X}$. Then
\begin{itemize}
  \item [$(1)$] $\mathcal{D}$ is relative coresolving and strongly covariantly finite in $\check{{_\mathcal{C}\X}}$.
  \item [$(2)$] ${^\bot\mathcal{D}\cap{_\mathcal{C}\X}}$ is relative resolving and strongly contravariantly finite in ${_\mathcal{C}\X}$.
  \item [$(3)$] ${^\bot\mathcal{D}\cap\check{{_\mathcal{C}\X}}}$ is strongly contravariantly finite in $\check{{_\mathcal{C}\X}}$.
\end{itemize}
\end{lemma}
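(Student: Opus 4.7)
The plan is to prove all three parts by the same pushout recipe: present the target object using Proposition \ref{equ} or a $\mathcal{C}$-deflation, apply the hypothesized strong covariant finiteness of $\mathcal{D}$ in ${_\mathcal{C}\X}$ to the ${_\mathcal{C}\X}$-piece, and then push out along the resulting left $\mathcal{D}$-approximation via Lemma \ref{ilem}. Two preliminary observations feed into every verification. First, $\mathcal{D}\subseteq{_\mathcal{C}\X}\subseteq\mathcal{C}^\bot$ forces $\mathcal{C}\subseteq{^\bot\mathcal{D}}$; a short induction on the length of a $\mathcal{C}$-coresolution via Lemma \ref{long} then promotes this to $\check{\mathcal{C}}\subseteq{^\bot\mathcal{D}}$. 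Second, Lemma \ref{4.5} will identify the cone of any left $\mathcal{D}$-approximation as an object of ${^{\bot_1}\mathcal{D}}$, and Lemma \ref{4.8} upgrades ${^{\bot_1}\mathcal{D}}$-membership to ${^\bot\mathcal{D}}$-membership on $\check{{_\mathcal{C}\X}}$.

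For (1), the closure properties (extensions, direct summands, cones of inflations) are inherited from $\mathcal{D}$ being relative coresolving in ${_\mathcal{C}\X}$, and a routine pushout shows that the relative injective cogenerator of ${_\mathcal{C}\X}$ still cogenerates $\check{{_\mathcal{C}\X}}$ (using Lemma \ref{ort}(1) for the required Ext-vanishing). The substantive content is strong covariant finiteness: given $M\in\check{{_\mathcal{C}\X}}$, Proposition \ref{equ} gives an $\E$-triangle $M\to U'\to V'$ with $U'\in{_\mathcal{C}\X}$ and $V'\in\check{\mathcal{C}}$. A left $\mathcal{D}$-approximation $U'\to D_{U'}\to U''$ of $U'$ (with $U''\in{^{\bot_1}\mathcal{D}}$ by Lemma \ref{4.5}) and the pushout of Lemma \ref{ilem} fit into a $3\times 3$ diagram whose middle column $V'\to W\to U''$ has both ends in ${^{\bot_1}\mathcal{D}}$ (since $V'\in\check{\mathcal{C}}\subseteq{^\bot\mathcal{D}}$), forcing $W\in{^{\bot_1}\mathcal{D}}$; hence $M\to D_{U'}$ is the desired left $\mathcal{D}$-approximation.

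For (2), set $\mathcal{F}={^\bot\mathcal{D}}\cap{_\mathcal{C}\X}$. The relative resolving property within ${_\mathcal{C}\X}$ is immediate, since both intersectands are closed under extensions, direct summands and cocones of deflations (inside ${_\mathcal{C}\X}$), and $\mathcal{C}\subseteq\mathcal{F}$. For strong contravariant finiteness, given $X\in{_\mathcal{C}\X}$, I would take a $\mathcal{C}$-deflation $K\to C_0\to X$ with $K\in{_\mathcal{C}\X}$, apply the hypothesis to $K$ to obtain $K\to D_K\to Y$ with $D_K\in\mathcal{D}$ and $Y\in\mathcal{F}$ (via Lemmas \ref{4.5} and \ref{4.8}), and push out along $K\to D_K$. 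The resulting $\E$-triangles $D_K\to E\to X$ and $C_0\to E\to Y$ place $E$ in $\mathcal{F}$ by closure under extensions; the cocone $D_K\in\mathcal{D}$ is annihilated by $\E(\mathcal{F},-)$, so $E\to X$ is a right $\mathcal{F}$-approximation.

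Part (3) follows the same recipe inside $\check{{_\mathcal{C}\X}}$: Proposition \ref{equ} supplies an $\E$-triangle $U\to V\to X$ with $U\in{_\mathcal{C}\X}$ and $V\in\check{\mathcal{C}}$; a left $\mathcal{D}$-approximation $U\to D_U\to U'$ and Lemma \ref{ilem} produce $D_U\to E\to X$ with the sister $\E$-triangle $V\to E\to U'$. Since $V\in\check{\mathcal{C}}\subseteq{^\bot\mathcal{D}}$ and $U'\in\mathcal{F}\subseteq{^\bot\mathcal{D}}$, we get $E\in{^\bot\mathcal{D}}$, while Proposition \ref{cone-co} places $E$ in $\check{{_\mathcal{C}\X}}$; the same cocone argument then produces the desired right ${^\bot\mathcal{D}}\cap\check{{_\mathcal{C}\X}}$-approximation. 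The main obstacle in the whole lemma is the natural but false closure of ${^\bot\mathcal{D}}$ under cones of inflations; the pushout template sidesteps this by arranging all cones to live in ${^{\bot_1}\mathcal{D}}$ or in $\check{\mathcal{C}}$, where long-exact-sequence bookkeeping closes cleanly.
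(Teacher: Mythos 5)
Your argument is correct, and for part (1) it is essentially the paper's proof verbatim: resolve $M\in\check{{_\mathcal{C}\X}}$ by Proposition \ref{equ}, take a left $\mathcal{D}$-approximation of the ${_\mathcal{C}\X}$-term (Lemmas \ref{4.5} and \ref{4.8}), and push out, using $\check{\mathcal{C}}\subseteq{^\bot\mathcal{D}}$ and extension-closure of ${^{\bot_1}\mathcal{D}}$ to control the new cone. Where you diverge is in the construction of the right approximation in (2) and (3). The paper starts from the left $\mathcal{D}$-approximation $X\to D\to M$ of $X$ itself, invokes Lemma \ref{4.9} to produce a generator triangle $D'\to W_D\to D$ with $W_D\in\mathcal{D}\cap{^\bot\mathcal{D}}$, and pulls back to obtain $Y\to X$ with cocone $D'\in\mathcal{D}$ and $Y\in{^\bot\mathcal{D}}\cap{_\mathcal{C}\X}$ as an extension of $M$ by $W_D$. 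You instead take a $\mathcal{C}$-deflation $K\to C_0\to X$, approximate the syzygy $K$, and push out, getting $E\to X$ with cocone $D_K\in\mathcal{D}$ and $E$ an extension of $Y$ by $C_0\in\mathcal{C}\subseteq{^\bot\mathcal{D}}\cap{_\mathcal{C}\X}$; both endings rely on the same observation that $\E(\mathcal{F},\mathcal{D})=0$ kills the obstruction to factoring. Your route has the small advantage of bypassing Lemma \ref{4.9} entirely and of dualizing cleanly to part (3), which the paper leaves as ``similar to (2)'' and which you actually carry out (correctly, using Proposition \ref{equ}(2) and Proposition \ref{cone-co} to keep $E$ inside $\check{{_\mathcal{C}\X}}$); the paper's route has the advantage of exhibiting the approximating object as built from $\omega=\mathcal{D}\cap{^\bot\mathcal{D}}$, which is reused later in the correspondence. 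The only points you gloss over at the same level of rigor as the paper are the resolving/coresolving closure claims and the fact that the relative injective cogenerator of ${_\mathcal{C}\X}$ still cogenerates $\check{{_\mathcal{C}\X}}$, so no genuine gap remains.
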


\begin{proof}
(1) Since the relative injective cogenerator of ${_\mathcal{C}\X}$ is also the relative injective cogenerator of $\check{{_\mathcal{C}\X}}$, it follows that $\mathcal{D}$ is relative coresolving in $\check{{_\mathcal{C}\X}}$.

Now let $X\in\check{{_\mathcal{C}\X}}$. By Proposition \ref{equ}, there is an $\E$-triangle $\xymatrix@C=0.5cm{X\ar[r]&Y\ar[r]&Z \ar@{-->}[r]&}$ with $Y\in{_C\X}$ and $Z\in\check{\mathcal{C}}$. For $Y$, by Lemmas \ref{4.5} and \ref{4.8}, there is an $\E$-triangle $\xymatrix@C=0.5cm{Y\ar[r]&D\ar[r]&M\ar@{-->}[r]&}$ with $D\in\mathcal{D}$ and $M\in{^\bot\mathcal{D}}$. Consider the following commutative diagram
$$\xymatrix@C=20pt@R=20pt{
X\ar[r]\ar@{=}[d]&Y\ar[r]\ar[d]&Z\ar[d]\ar@{-->}[r]&\\
X\ar[r]&D\ar[r]\ar[d]&N\ar@{-->}[r]\ar[d]&\\
&M\ar@{=}[r]\ar@{-->}[d]&M\ar@{-->}[d]&\\
&&&}$$
Since $\mathcal{D}\subseteq{_\mathcal{C}\X}\subseteq \mathcal{C}^\bot$, $\check{\mathcal{C}}\subseteq{^\bot\mathcal{D}}$. Thus $Z\in{^\bot\mathcal{D}}$. Moreover, $M\in{^\bot\mathcal{D}}$, we can get $N\in{^\bot\mathcal{D}}$, which shows that $X\to D$ is a left $\mathcal{D}$-approximation. Therefore, $\mathcal{D}$ is strongly covariantly finite in $\check{{_\mathcal{C}\X}}$.

(2) Clearly, ${^\bot\mathcal{D}\cap{_\mathcal{C}\X}}$ is relative resolving in ${_\mathcal{C}\X}$. Let $X\in{_\mathcal{C}\X}$. By Lemmas \ref{4.5} and \ref{4.8}, there is an $\E$-triangle $\xymatrix@C=0.5cm{X\ar[r]&D\ar[r]&M \ar@{-->}[r]&}$ with $D\in\mathcal{D}$ and $M\in{^\bot\mathcal{D}}$. Since $D\in\mathcal{D}\subseteq{_\mathcal{C}\X}$, $M\in{_\mathcal{C}\X}$ by Lemma \ref{ort}, and hence $M\in{^\bot\mathcal{D}}\cap{_\mathcal{C}\X}$. Moreover, by Lemma \ref{4.9}, there is an $\E$-triangle $\xymatrix@C=0.5cm{D'\ar[r]&W_D\ar[r]&D \ar@{-->}[r]&}$ with $D'\in\mathcal{D}$ and $W_D\in\omega=\mathcal{D}\cap{^\bot\mathcal{D}}$.
Consider the following commutative diagram
$$\xymatrix@C=20pt@R=20pt{D'\ar@{=}[r]\ar[d]&D'\ar[d]&\\
Y\ar[r]\ar[d]&W_D\ar[d]\ar[r]&M\ar@{=}[d]\ar@{-->}[r]&\\
X\ar[r]\ar@{-->}[d]&D\ar[r]\ar@{-->}[d]&M\ar@{-->}[r]&\\
&&&}$$
Since $D',X\in \mathcal{C}^\bot$, $Y\in C^\bot$. Moreover, since $W_D,M\in{_\mathcal{C}\X}$, we have $Y\in {_\mathcal{C}\X}$ by Lemma \ref{ort}. Finally, since $W_D,M\in {^\bot\mathcal{D}}$, we obtain $Y\in{^\bot\mathcal{D}}$, and consequently $Y\in{^\bot\mathcal{D}}\cap{_\mathcal{C}\X}$. This shows that $Y\to X$ is a right ${^\bot\mathcal{D}}\cap{_\mathcal{C}\X}$-approximation, which shows that ${^\bot\mathcal{D}}\cap{_\mathcal{C}\X}$ is strongly contravariantly finite in ${_\mathcal{C}\X}$.

(3) Similar to (2).
\end{proof}

By a similar argument to that of Lemmas \ref{4.7}, \ref{4.8} and \ref{4.13}, we can get

\begin{lemma}\label{4.14}
Let $\mathcal{C}$ be self-orthogonal and ${_\mathcal{C}\X}$ has a relative injective cogenerator. Assume that $\mathcal{B}$ is a relative resolving and strongly contravariantly finite in ${_\mathcal{C}\X}$.
\begin{itemize}
  \item [$(1)$] If $X\in{_\mathcal{C}\X}\cap\mathcal{B}^{\bot_1}$, the $X\in\mathcal{B}^{\bot}$.
  \item [$(2)$] For any $B\in \mathcal{B}$, there is an $\E$-triangle $\xymatrix@C=0.5cm{B\ar[r]&W_B\ar[r]&B' \ar@{-->}[r]&}$ with $W_B\in\omega=\mathcal{B}^\bot\cap\mathcal{B}$, and $B'\in\mathcal{B}$. In particular, $\mathcal{B}\subseteq{\X_\omega}$.
  \item [$(3)$] $\mathcal{B}^\bot\cap{_\mathcal{C}\X}$ is relative coresolving and strongly covariantly finite in ${_\mathcal{C}\X}$.
\end{itemize}
\end{lemma}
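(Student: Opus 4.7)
The three assertions dualize parts of Lemmas \ref{4.8}, \ref{4.9}, and \ref{4.13} in spirit, but the hypothesis is asymmetric: we have only a relative injective cogenerator of ${_\mathcal{C}\X}$, not also a projective generator. Consequently part (1) cannot be derived by a direct formal dualization of Lemma \ref{4.8}, and instead requires a dimension-shifting argument through proper $\mathcal{C}$-resolutions. The key preliminary observation is that $\mathcal{C}\subseteq\mathcal{B}$: for any $C\in\mathcal{C}$ and any $\E$-triangle $\xymatrix@C=0.5cm{A\ar[r]&Y\ar[r]^g&Z\ar@{-->}[r]&}$ in ${_\mathcal{C}\X}$, Lemma \ref{long} together with $\E(C,A)=0$ (from $A\in{_\mathcal{C}\X}\subseteq\mathcal{C}^\bot$) shows every morphism $C\to Z$ lifts through $g$, so $C\in{\rm Proj}({_\mathcal{C}\X})\subseteq\mathcal{B}$.

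For part (1), given $B\in\mathcal{B}$, fix a proper $\mathcal{C}$-resolution $\xymatrix@C=0.5cm{\cdots\ar[r]&C_1\ar[r]&C_0\ar[r]&B}$ with syzygies $K_i\in{_\mathcal{C}\X}$ and corresponding $\E$-triangles $\xymatrix@C=0.5cm{K_{i+1}\ar[r]&C_i\ar[r]&K_i\ar@{-->}[r]&}$. Starting from $K_0=B\in\mathcal{B}$ and using $C_i\in\mathcal{C}\subseteq\mathcal{B}$, the closure of $\mathcal{B}$ under cocones of deflations yields $K_i\in\mathcal{B}$ for all $i$ by induction. For $X\in{_\mathcal{C}\X}\cap\mathcal{B}^{\bot_1}$, the long exact sequences of Lemma \ref{long} applied to these $\E$-triangles, combined with $\E^j(C_i,X)=0$ for $j\geq 1$ (since $X\in\mathcal{C}^\bot$), shift the vanishing to give $\E^{i+1}(B,X)\cong\E(K_i,X)=0$ for every $i\geq 0$, so $X\in\mathcal{B}^\bot$.

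For part (2), given $B\in\mathcal{B}$, I would first invoke the cogenerator to produce $\xymatrix@C=0.5cm{B\ar[r]&I_B\ar[r]&B_1\ar@{-->}[r]&}$ with $I_B\in\mathcal{I}$ and $B_1\in{_\mathcal{C}\X}$, then take a right $\mathcal{B}$-approximation $\xymatrix@C=0.5cm{M\ar[r]&B''\ar[r]&B_1\ar@{-->}[r]&}$ of $B_1$ (with $B''\in\mathcal{B}$ and $M\in\mathcal{B}^{\bot_1}$ by Lemma \ref{4.5}). Since $\mathcal{C}\subseteq\mathcal{B}$ forces $M\in\mathcal{C}^{\bot_1}$, Lemma \ref{ort}(4) places $M\in{_\mathcal{C}\X}$, and part (1) upgrades this to $M\in\mathcal{B}^\bot$. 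Applying Lemma \ref{ile}(1) to the two $\E$-triangles ending at $B_1$ yields a $3\times 3$ diagram in which the middle row $\xymatrix@C=0.5cm{B\ar[r]&W_B\ar[r]&B''\ar@{-->}[r]&}$ exhibits $W_B\in\mathcal{B}$ (extension closure) and the middle column $\xymatrix@C=0.5cm{M\ar[r]&W_B\ar[r]&I_B\ar@{-->}[r]&}$ exhibits $W_B\in\mathcal{B}^\bot$ (using $\mathcal{I}\subseteq({_\mathcal{C}\X})^\bot\subseteq\mathcal{B}^\bot$ together with the long exact sequence). Thus $W_B\in\omega$, and setting $B':=B''$ gives the required $\E$-triangle; iterating produces $\mathcal{B}\subseteq\X_\omega$. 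Part (3) is entirely analogous: extension, summand, and cone-of-inflation closure of $\mathcal{B}^\bot\cap{_\mathcal{C}\X}$ is immediate from Lemma \ref{long} and Lemma \ref{ort}(3), and $\mathcal{I}$ is manifestly contained in it; for strong covariant finiteness, starting with $X\in{_\mathcal{C}\X}$, produce $\xymatrix@C=0.5cm{X\ar[r]&I_X\ar[r]&X_1\ar@{-->}[r]&}$ and a right $\mathcal{B}$-approximation $\xymatrix@C=0.5cm{K\ar[r]&B\ar[r]&X_1\ar@{-->}[r]&}$, then form the corresponding $3\times 3$ diagram via Lemma \ref{ile}(1); its middle row $\xymatrix@C=0.5cm{X\ar[r]&Y\ar[r]&B\ar@{-->}[r]&}$ and middle column show $Y\in\mathcal{B}^\bot\cap{_\mathcal{C}\X}$, and since $\E(B,Y')=0$ for every $Y'\in\mathcal{B}^\bot$, the map $X\to Y$ is a left $\mathcal{B}^\bot\cap{_\mathcal{C}\X}$-approximation. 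The main obstacle throughout is the asymmetric setup: without a dual projective generator, (1) demands the preliminary inclusion $\mathcal{C}\subseteq\mathcal{B}$ and a genuine dimension shift through $\mathcal{C}$-resolutions, and only after (1) is in hand do parts (2) and (3) reduce to the familiar pullback-style diagrams of Lemmas \ref{4.9} and \ref{4.13}.
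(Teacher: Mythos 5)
Your proof is correct and follows essentially the route the paper intends: the paper leaves Lemma \ref{4.14} as ``a similar argument to Lemmas \ref{4.7}, \ref{4.8} and \ref{4.13},'' and your write-up is exactly that dualization, carried out via the pushout/pullback diagrams of Lemma \ref{ile} and the orthogonality bookkeeping of Lemmas \ref{ort} and \ref{long}. In particular you correctly identify the one point where the dualization is not purely formal — part (1) needs a dimension shift through proper $\mathcal{C}$-resolutions, which works because $\mathcal{C}\subseteq{\rm Proj}({_\mathcal{C}\X})\subseteq\mathcal{B}$, so $\mathcal{C}$ plays the role of the relative projective generator that the hypotheses do not explicitly supply.
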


Now we have

\begin{proposition}\label{4.15}
Let $\mathcal{C}$ be self-orthogonal  and ${_\mathcal{C}\X}$ has a relative injective cogenerator.
 Then there is a one-one correspondence as follows:
 \begin{align*}
  \left\{\begin{array}{c}
            \mbox{relative coresolving,}\\\mbox{ strongly covariantly finite}  \\
             \mbox{subcategories in }{_\mathcal{C}\X}
          \end{array}\right\} &\longrightarrow \left\{\begin{array}{c}
            \mbox{relative resolving,}\\ \mbox{strongly contravariantly finite}  \\
             \mbox{subcategories in }{_\mathcal{C}\X}
          \end{array}\right\}  \\
   \mathcal{D}& \ \mapsto \  {^\bot\mathcal{D}}\cap{_\mathcal{C}\X}
 \end{align*}
 where the inverse map is given by $\mathcal{B}\mapsto {\mathcal{B}^\bot\cap{_\mathcal{C}\X}}$.
\end{proposition}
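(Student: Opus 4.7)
The plan is to check that the two maps are well-defined and mutually inverse. Well-definedness is already in hand: Lemma~\ref{4.13}(2) shows that if $\mathcal{D}$ is relative coresolving and strongly covariantly finite in ${_\mathcal{C}\X}$, then ${^\bot\mathcal{D}}\cap{_\mathcal{C}\X}$ is relative resolving and strongly contravariantly finite in ${_\mathcal{C}\X}$; dually, Lemma~\ref{4.14}(3) supplies the opposite direction. So only the inverse property needs to be proved.

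For the composition $\mathcal{D}\mapsto{^\bot\mathcal{D}}\cap{_\mathcal{C}\X}\mapsto ({^\bot\mathcal{D}}\cap{_\mathcal{C}\X})^\bot\cap{_\mathcal{C}\X}$, one inclusion is immediate from the definitions. For the reverse, take $X\in({^\bot\mathcal{D}}\cap{_\mathcal{C}\X})^\bot\cap{_\mathcal{C}\X}$. Since $\mathcal{D}$ is strongly covariantly finite in ${_\mathcal{C}\X}$, I would form a left $\mathcal{D}$-approximation $\xymatrix@C=0.4cm{X\ar[r]&D\ar[r]&Y\ar@{-->}[r]&}$; Lemma~\ref{4.5}(2) then lets me arrange $Y\in{^{\bot_1}\mathcal{D}}$, and Lemma~\ref{4.8} (applicable because $Y\in{_\mathcal{C}\X}\subseteq\check{{_\mathcal{C}\X}}$, noting ${_\mathcal{C}\X}$ is closed under cones of inflations by Lemma~\ref{ort}(3)) upgrades this to $Y\in{^\bot\mathcal{D}}$. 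Thus $Y\in{^\bot\mathcal{D}}\cap{_\mathcal{C}\X}$, and $X\in({^\bot\mathcal{D}}\cap{_\mathcal{C}\X})^\bot$ forces the $\E$-triangle to split. Hence $X$ is a direct summand of $D\in\mathcal{D}$; since $\mathcal{D}$ is a subcategory (closed under direct summands), $X\in\mathcal{D}$.

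For the composition $\mathcal{B}\mapsto\mathcal{B}^\bot\cap{_\mathcal{C}\X}\mapsto {^\bot(\mathcal{B}^\bot\cap{_\mathcal{C}\X})}\cap{_\mathcal{C}\X}$, one inclusion is again immediate. For the other, take $X\in{^\bot(\mathcal{B}^\bot\cap{_\mathcal{C}\X})}\cap{_\mathcal{C}\X}$. Using that $\mathcal{B}$ is strongly contravariantly finite in ${_\mathcal{C}\X}$, I would form a right $\mathcal{B}$-approximation $\xymatrix@C=0.4cm{Y\ar[r]&B\ar[r]&X\ar@{-->}[r]&}$ with $B\in\mathcal{B}$; by Lemma~\ref{4.5}(1) I may assume $Y\in\mathcal{B}^{\bot_1}$, and then Lemma~\ref{4.14}(1) gives $Y\in\mathcal{B}^\bot$. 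The subtle point here, and what I expect to be the main (minor) obstacle, is to see that $Y\in{_\mathcal{C}\X}$: since $B,X\in{_\mathcal{C}\X}\subseteq\mathcal{C}^\bot$, the long exact sequence in Lemma~\ref{long} gives $Y\in\mathcal{C}^\bot\subseteq\mathcal{C}^{\bot_1}$, and then Lemma~\ref{ort}(4) applied with $\omega=\mathcal{C}$ yields $Y\in{_\mathcal{C}\X}$. Thus $Y\in\mathcal{B}^\bot\cap{_\mathcal{C}\X}$, and the hypothesis on $X$ splits the $\E$-triangle, whence $X$ is a direct summand of $B\in\mathcal{B}$ and so $X\in\mathcal{B}$.

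Putting these two steps together establishes the bijection. The only delicate point is the closure argument that certifies $Y\in{_\mathcal{C}\X}$ in the second composition, and that is handled cleanly by Lemma~\ref{ort}(4) once one observes that $Y$ lies in $\mathcal{C}^\bot$ by the long exact sequence.
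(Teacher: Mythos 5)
Your proposal is correct and follows essentially the same route as the paper: the paper packages the construction of the approximation $\E$-triangles with cone/cocone in ${^\bot\mathcal{D}}\cap{_\mathcal{C}\X}$ (resp. $\mathcal{B}^\bot\cap{_\mathcal{C}\X}$) into Lemmas \ref{4.13} and \ref{4.14}, and the proof of Proposition \ref{4.15} itself is then just the splitting argument you give. One tiny justification slip: the long exact sequence alone only gives $\E^i(\mathcal{C},Y)=0$ for $i\geq 2$ in the second composition, but this is harmless since Lemma \ref{ort}(4) only needs $Y\in\mathcal{C}^{\bot_1}$, which you already have from $Y\in\mathcal{B}^{\bot_1}$ and $\mathcal{C}\subseteq\mathcal{B}$.
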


\begin{proof}
By Lemmas \ref{4.13} and \ref{4.14}, the maps above are well-defined.

$({^\bot\mathcal{D}\cap{_\mathcal{C}\X}})^\bot\cap{_\mathcal{C}\X}=\mathcal{D}$: Clearly, $\mathcal{D}\subseteq({^\bot\mathcal{D}\cap{_\mathcal{C}\X}})^\bot\cap{_\mathcal{C}\X}$. Conversely, let $X\in({^\bot\mathcal{D}\cap{_\mathcal{C}\X}})^\bot\cap{_\mathcal{C}\X}$. Since $\mathcal{D}$ is strongly covariantly finite, there is an $\E$-triangle $\xymatrix@C=0.5cm{X\ar[r]&D\ar[r]&Z \ar@{-->}[r]&}$ with $D\in\mathcal{D}$ and $Z\in{^\bot\mathcal{D}\cap{_\mathcal{C}\X}}$. It is split since $X\in({^\bot\mathcal{D}\cap{_\mathcal{C}\X}})^\bot$, which shows that $D\cong X\oplus Z$. Thus $X\in\mathcal{D}$, and hence $({^\bot\mathcal{D}\cap{_\mathcal{C}\X}})^\bot\cap{_\mathcal{C}\X}\subseteq\mathcal{D}$. Therefore, $({^\bot\mathcal{D}\cap{_\mathcal{C}\X}})^\bot\cap{_\mathcal{C}\X}=\mathcal{D}$.

Dually, we can prove ${^\bot(\mathcal{B}^\bot\cap{_\mathcal{C}\X})\cap{_\mathcal{C}\X}}=\mathcal{B}$.
\end{proof}

\begin{lemma}\label{4.16}
Let $C$ be self-orthogonal and $\check{{_\mathcal{C}\X}}$ be finitely filtered. Assume that $\mathcal{D}$ is strongly covariantly finite and relative coresolving in ${_\mathcal{C}\X}$. Then

there is some $n$ such that $\E^{n+1}({^\bot\mathcal{D}}\cap\check{{_\mathcal{C}\X}}, {_\mathcal{C}\X})=0$, provided $\check{\mathcal{D}}=\check{{_\mathcal{C}\X}}$.
\end{lemma}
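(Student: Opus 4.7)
The plan is to extract a uniform bound $n$ on $\mathcal{D}$-codimensions in $\check{{_\mathcal{C}\X}}$ from Lemma \ref{4.7}(1), and then to conclude by a single round of dimension shifting against ${^\bot\mathcal{D}}$.

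For the first step, I would observe that $\mathcal{D}$, being relative coresolving in ${_\mathcal{C}\X}$, is contained in ${_\mathcal{C}\X}$ and closed under extensions and cones of inflations; together with the hypothesis $\check{\mathcal{D}}=\check{{_\mathcal{C}\X}}$ and the finite filtration hypothesis on $\check{{_\mathcal{C}\X}}$, Lemma \ref{4.7}(1) supplies an integer $n$ with $\check{\mathcal{D}}=\check{\mathcal{D}}_n$. I would fix this $n$ once and for all and claim it works uniformly in the sense required.

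For the second step, fix any $X\in{^\bot\mathcal{D}}\cap\check{{_\mathcal{C}\X}}$ and $Y\in{_\mathcal{C}\X}$. Since ${_\mathcal{C}\X}\subseteq\check{{_\mathcal{C}\X}}=\check{\mathcal{D}}_n$, there is an $\E$-triangle sequence $Y\to W^0\to W^1\to\cdots\to W^n$ with each $W^i\in\mathcal{D}$, arising from $\E$-triangles $K^i\to W^i\to K^{i+1}$ for $0\le i\le n-1$, with $K^0=Y$ and $K^n=W^n$ (exactly the dimension-shifting setup used in the proof of Lemma \ref{ort}(2)). I would then apply the covariant long exact sequences of Lemma \ref{long} for the functor $\C(X,-)$ to these $\E$-triangles. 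The assumption $X\in{^\bot\mathcal{D}}$ forces $\E^j(X,W^i)=0$ for every $j\ge 1$ and every $i$, so all connecting morphisms become isomorphisms, giving
$$\E^{n+1}(X,Y)\cong\E^n(X,K^1)\cong\E^{n-1}(X,K^2)\cong\cdots\cong\E^1(X,K^n)=\E^1(X,W^n)=0.$$

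The main point is really the first step: the finite filtration hypothesis (packaged into Lemma \ref{4.7}(1)) is what prevents the dimension shift from running off to infinity, and without a uniform codimension bound $n$ one could not even formulate the conclusion. The telescoping itself, granted $X\in{^\bot\mathcal{D}}$, is routine and poses no real obstacle. I would also note in passing that the auxiliary hypothesis $X\in\check{{_\mathcal{C}\X}}$ does not appear in this argument; presumably it is present because the natural ambient category for the later application of this lemma is $\check{{_\mathcal{C}\X}}$, rather than because it is needed for the vanishing itself.
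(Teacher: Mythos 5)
Your proof is correct, but it takes a genuinely different route from the paper's. The paper works in the \emph{first} variable: it reduces, via the finite filtration of $\check{{_\mathcal{C}\X}}$, to the finitely many filtration objects $S_i$, replaces each $S_i$ by an object $X_i\in{^\bot\mathcal{D}}\cap{_\mathcal{C}\X}$ using Proposition \ref{equ}, invokes Lemma \ref{4.7}(2) to place $X_i$ in $\hat{\mathcal{C}}$, sets $n=\max_i\,\mathcal{C}\mbox{-dim}X_i$, and obtains the vanishing by shifting down a finite $\mathcal{C}$-resolution of $X_i$ against ${_\mathcal{C}\X}\subseteq\mathcal{C}^\bot$ (together with $\check{\mathcal{C}}\,\bot\,\mathcal{C}^\bot$ to handle the third term $Z_i\in\check{\mathcal{C}}$, and then the filtration to pass from the $S_i$ to all of ${^\bot\mathcal{D}}\cap\check{{_\mathcal{C}\X}}$). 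You instead work in the \emph{second} variable: Lemma \ref{4.7}(1) gives the uniform bound $\check{\mathcal{D}}=\check{\mathcal{D}}_n$, and you shift up a length-$n$ $\mathcal{D}$-coresolution of $Y\in{_\mathcal{C}\X}\subseteq\check{\mathcal{D}}_n$ against $X\in{^\bot\mathcal{D}}$. Both arguments are dimension shifts drawing their uniformity from the finite filtration hypothesis (the paper directly, you through Lemma \ref{4.7}(1), whose own proof uses it), but yours is shorter, actually establishes the stronger statement $\E^{n+1}({^\bot\mathcal{D}},{_\mathcal{C}\X})=0$ --- your observation that the hypothesis $X\in\check{{_\mathcal{C}\X}}$ is never used is accurate --- and sidesteps a delicate point in the paper's write-up, namely that the filtration objects $S_i$ are tacitly taken to lie in ${^\bot\mathcal{D}}\cap\check{{_\mathcal{C}\X}}$, which the filtration hypothesis alone does not guarantee. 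The paper's route produces an $n$ tied to $\mathcal{C}$-dimensions rather than $\mathcal{D}$-codimensions, but either value of $n$ serves equally well in the subsequent application (Proposition \ref{4.17}).
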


\begin{proof}
 Let $\mathcal{S}=\{S_1,\cdots,S_t\}\subseteq{^\bot\mathcal{D}\cap\check{{_\mathcal{C}\X}}}$ be the finite subcategory. For each $S_i\in\mathcal{S}$, since $S_i\in\check{{_\mathcal{C}\X}}$, by Proposition \ref{equ} there is an $\E$-triangle $\xymatrix@C=0.5cm{S_i\ar[r]&X_i\ar[r]&Z_i \ar@{-->}[r]&}$ with $X_i\in{_\mathcal{C}\X}$ and $Z_i\in\check{\mathcal{C}}$. Since $\mathcal{D}\subseteq{_\mathcal{C}\X}\subseteq \mathcal{C}^\bot$, $\check{\mathcal{C}}\subseteq{^\bot\mathcal{D}}$, which shows that $Z_i\in{^\bot\mathcal{D}}$. Moreover, since $S_i\in{^\bot\mathcal{D}}$, $X_i\in{^\bot\mathcal{D}}$, and hence $X_i\in{^\bot\mathcal{D}}\cap{_C\X}$. By Lemma \ref{4.7}, ${^\bot\mathcal{D}}\cap{_\mathcal{C}\X}\subseteq \hat{\mathcal{C}}$. Denote $n=\max\{\mathcal{C}\mbox{-dim}X_i\mid 1\leq i\leq t\}$. Then $\E^{n+1}(S_i,{_\mathcal{C}\X})=0$, and hence $\E^{n+1}({^\bot\mathcal{D}}\cap\check{{_\mathcal{C}\X}},{_\mathcal{C}\X})=0$.
\end{proof}

\begin{proposition}\label{4.17}
Let $\mathcal{C}$ be self-orthogonal and $\check{{_\mathcal{C}\mathcal{X}}}$ be finitely filtered. Assume that $\mathcal{D}$ is strongly covariantly finite and relative coresolving in ${_\mathcal{C}\X}$. Then $\check{\mathcal{D}}=\check{{_\mathcal{C}\X}}$ if and only if ${^\bot\mathcal{D}}\cap{_\mathcal{C}\X}\subseteq \hat{\mathcal{C}}$.
\end{proposition}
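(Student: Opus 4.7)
The forward direction is immediate from Lemma~\ref{4.7}(2): if $\check{\mathcal{D}}=\check{{_\mathcal{C}\X}}$, the conclusion of that lemma is precisely ${^\bot\mathcal{D}}\cap{_\mathcal{C}\X}\subseteq\hat{\mathcal{C}}$. For the reverse direction, assume ${^\bot\mathcal{D}}\cap{_\mathcal{C}\X}\subseteq\hat{\mathcal{C}}$; since $\mathcal{D}\subseteq{_\mathcal{C}\X}$ already gives $\check{\mathcal{D}}\subseteq\check{{_\mathcal{C}\X}}$, the content is to show $\check{{_\mathcal{C}\X}}\subseteq\check{\mathcal{D}}$. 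My plan is to first transplant the proof of Lemma~\ref{4.16} to our setting and then use its conclusion to terminate an iterated $\mathcal{D}$-approximation.

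The first step is to establish $\E^{n+1}({^\bot\mathcal{D}}\cap\check{{_\mathcal{C}\X}},{_\mathcal{C}\X})=0$ for some $n$. Inspecting the proof of Lemma~\ref{4.16}, the only place it used $\check{\mathcal{D}}=\check{{_\mathcal{C}\X}}$ was via Lemma~\ref{4.7}(2), which supplied ${^\bot\mathcal{D}}\cap{_\mathcal{C}\X}\subseteq\hat{\mathcal{C}}$; this is precisely our present hypothesis, so the same argument applies verbatim. Concretely, take a finite filter $\mathcal{S}=\{S_1,\ldots,S_t\}$ of $\check{{_\mathcal{C}\X}}$ (suitably refined so that its elements lie in ${^\bot\mathcal{D}}$); for each $S_i$, Proposition~\ref{equ} supplies an $\E$-triangle $S_i\to X_i\to Z_i$ with $X_i\in{_\mathcal{C}\X}$ and $Z_i\in\check{\mathcal{C}}\subseteq{^\bot\mathcal{D}}$, and since $S_i\in{^\bot\mathcal{D}}$ this forces $X_i\in{^\bot\mathcal{D}}\cap{_\mathcal{C}\X}\subseteq\hat{\mathcal{C}}$ by hypothesis, so each $\mathcal{C}\mbox{-dim}X_i$ is finite. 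Taking $n$ to be the maximum of these dimensions, Lemma~\ref{long} applied first to the $\E$-triangle $S_i\to X_i\to Z_i$ and then propagated through the $\mathcal{S}$-filtration yields the claimed vanishing.

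The second step is to exploit this bound. For any $M\in\check{{_\mathcal{C}\X}}$, Lemma~\ref{4.13}(1) furnishes $\mathcal{D}$-approximations inside $\check{{_\mathcal{C}\X}}$, which I iterate to obtain $\E$-triangles $M_i\to D_i\to M_{i+1}\dashrightarrow$ with $M_0=M$, each $D_i\in\mathcal{D}$, and each $M_{i+1}\in{^\bot\mathcal{D}}\cap\check{{_\mathcal{C}\X}}$ (via Lemmas~\ref{4.5} and~\ref{4.8}). Concatenating gives an $\E$-triangle sequence $M\to D_0\to\cdots\to D_{n-1}\to M_n$, and the goal is to show $M_n\in\mathcal{D}$, which will give $M\in\check{\mathcal{D}}_{n-1}\subseteq\check{\mathcal{D}}$. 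Taking one further approximation $M_n\to D_n\to M_{n+1}\dashrightarrow \delta_n$, the extension class $\delta_n\in\E(M_{n+1},M_n)$ can be identified, by iterating Lemma~\ref{long} along the previous $n$ triangles, with a class in the group $\E^{n+1}({^\bot\mathcal{D}}\cap\check{{_\mathcal{C}\X}},{_\mathcal{C}\X})$ which vanishes by the first step. Hence $\delta_n=0$, the triangle splits, $M_n$ is a direct summand of $D_n\in\mathcal{D}$, and closure of $\mathcal{D}$ under direct summands gives $M_n\in\mathcal{D}$.

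The main obstacle is the dimension-shift identification of $\delta_n$ with a class in the vanishing group of the first step. Since $\mathcal{D}$ is not self-orthogonal, the naive shift along the $D_i$ fails, and one must instead work with the self-orthogonal subcategory $\omega=\mathcal{D}\cap{^\bot\mathcal{D}}$ from Lemma~\ref{4.9} (which satisfies $\mathcal{D}\subseteq{_\omega\X}$), replacing the $\mathcal{D}$-approximations by $\omega$-approximations at the bookkeeping step so that the LES of Lemma~\ref{long} produces the requisite vanishing. A secondary subtlety is ensuring that the finite filter $\mathcal{S}$ can be chosen inside ${^\bot\mathcal{D}}\cap\check{{_\mathcal{C}\X}}$ rather than merely in $\check{{_\mathcal{C}\X}}$, which amounts to verifying that ${^\bot\mathcal{D}}\cap\check{{_\mathcal{C}\X}}$ inherits a finite filtration from the ambient one under the present hypotheses.
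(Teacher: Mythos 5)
Your overall strategy coincides with the paper's (establish the bound $\E^{n+1}({^\bot\mathcal{D}}\cap\check{{_\mathcal{C}\X}},{_\mathcal{C}\X})=0$, then iterate left $\mathcal{D}$-approximations and split the last triangle), and your first step is in fact handled more carefully than in the paper: you correctly observe that Lemma \ref{4.16} cannot be cited as stated (its hypothesis $\check{\mathcal{D}}=\check{{_\mathcal{C}\X}}$ is the very conclusion being proved) but that its proof uses that hypothesis only through Lemma \ref{4.7}(2), which is exactly the present assumption. However, your second step has a genuine gap. The dimension shift you propose identifies $\E(M_{n+1},M_n)\cong\E^{n+1}(M_{n+1},M_0)$ with $M_0=M$, and $M$ lies only in $\check{{_\mathcal{C}\X}}$, not in ${_\mathcal{C}\X}$. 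The vanishing from your first step requires the second argument to lie in ${_\mathcal{C}\X}$: the proof of Lemma \ref{4.16} reduces $\E^{n+1}(S_i,-)$ to $\E(C,-)$ for some $C\in\mathcal{C}$, so it needs the target in $\mathcal{C}^{\bot}$, and $\check{{_\mathcal{C}\X}}\not\subseteq\mathcal{C}^{\bot}$ in general. So $\delta_n$ is not a class in the vanishing group, and the splitting does not follow.

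Moreover, the ``main obstacle'' you name is a misdiagnosis: the shift in the second variable along the $D_i$ does not require $\mathcal{D}$ to be self-orthogonal; it only needs $\E^{j}(M_{n+1},D_i)=0$ for all $j\geq 1$, which holds because $M_{n+1}\in{^\bot\mathcal{D}}$ and $D_i\in\mathcal{D}$. Replacing the $\mathcal{D}$-approximations by $\omega$-approximations with $\omega=\mathcal{D}\cap{^\bot\mathcal{D}}$ therefore fixes nothing. The missing idea, which is how the paper proceeds, is a preliminary descent: since $M\in\check{{_\mathcal{C}\X}}$ has finite ${_\mathcal{C}\X}$-codimension and each $D_i\in\mathcal{D}\subseteq{_\mathcal{C}\X}$ has ${_\mathcal{C}\X}$-codimension zero, Lemma \ref{3.6}(2) forces the cosyzygies' ${_\mathcal{C}\X}$-codimensions to drop by one at each step, so $M_m\in{_\mathcal{C}\X}$ for some finite $m$ (hence $M_m\in{^\bot\mathcal{D}}\cap{_\mathcal{C}\X}\subseteq\hat{\mathcal{C}}$ by hypothesis); one then performs the $(n+1)$-fold shift starting from $M_m$ and splits the triangle at stage $m+n+1$, via $\E(M_{m+n+1},M_{m+n})\cong\E^{n+1}(M_{m+n+1},M_m)=0$. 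Your secondary worry about whether the finite filter can be taken inside ${^\bot\mathcal{D}}\cap\check{{_\mathcal{C}\X}}$ is legitimate, but it is a gap shared with the paper's own Lemma \ref{4.16} rather than one specific to your argument.
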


\begin{proof}
The ``only if" part follows from Lemma \ref{4.7}.

The ``if" part. Since $\mathcal{D}\subseteq{_\mathcal{C}\X}$, $\check{\mathcal{D}}\subseteq\check{{_\mathcal{C}\X}}$.
On the other hand, let $X\in\check{{_\mathcal{C}\X}}$. Then there is an $\E$-triangle sequence $\xymatrix@C=0.5cm{X\ar[r]^{f_0}&D_0\ar[r]^{f_1}&D_1 \ar[r]&\cdots}$ with each $D_i\in\mathcal{D}$ and each $K_i\in{^\bot\mathcal{D}}\cap\check{{_\mathcal{C}\X}}$. Here $K_i$ arise in the corresponding $\E$-triangle $\xymatrix@C=0.5cm{K_{i-1}\ar[r]&D_i\ar[r]&K_i\ar@{-->}[r]&}$.
 By Lemma \ref{3.6}, $K_m\in{_\mathcal{C}\X}$ for some $m$, and thus $K_m\in{^\bot\mathcal{D}}\cap{_\mathcal{C}\X}\subseteq\hat{\mathcal{C}}$. By Lemma \ref{4.16}, there is some $n$ such that $\E^{n+1}({^\bot\mathcal{D}}\cap\check{{_\mathcal{C}\X}},{_\mathcal{C}\X})=0$, and hence $\E(K_{m+n+1},K_{m+n})\cong\E^{n+1}(K_{m+n+1},K_{m})=0$.
This shows that the $\E$-triangle
$$\xymatrix@C=0.5cm{K_{m+n}\ar[r]&D_{m+n+1}\ar[r]&K_{m+n+1} \ar@{-->}[r]&}$$ is split, and hence $K_{m+n}\in\mathcal{D}$. Therefore,
$\check{{_\mathcal{C}\X}}\subseteq\check{\mathcal{D}}$.
\end{proof}

Collecting the above results, we now give the second type of Auslander-Reiten correspondence, which gives a classification for
$\mathcal{C}$-tilting subcategories in terms of a certain class of relative resolving strongly cotravariantly finite
subcategories in ${_\mathcal{C}\X}$.

\begin{theorem}\label{fal}
Let $\mathcal{C}$ be self-orthogonal such that ${_\mathcal{C}\X}$ has a relative injective cogenerator  and $\check{{_\mathcal{C}\X}}$ is finitely filtered.
Then there is a one-one correspondence as follows:
 \begin{align*}
   \left\{\mbox{finite  }\mathcal{C}\mbox{-tilting subcategories}\right\} &\longrightarrow {\left\{ \begin{array}{c}
                                                                                             \mbox{subcategories }\mathcal{D}\mbox{ which are relative} \\
                                                                                             \mbox{resolving, strongly contravariantly}\\ \mbox{finite in }{_\mathcal{C}\X},\mbox{ and are contained in }\hat{\mathcal{C}}
                                                                                           \end{array}\right\}.}\\
   \mathcal{T}&\mapsto \   \check{\mathcal{T}}\cap{_\mathcal{C}\X}
 \end{align*}
\end{theorem}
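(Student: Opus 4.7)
The plan is to realize this correspondence as the composition of the two bijections already established, namely Theorem \ref{4.12} and Proposition \ref{4.15}, and to identify its underlying assignment with the simpler formula $\mathcal{T}\mapsto \check{\mathcal{T}}\cap{_\mathcal{C}\X}$. Concretely, I would define
\[
\Phi\colon \mathcal{T}\longmapsto {_\mathcal{T}\X}=\mathcal{T}^{\bot}\cap{_\mathcal{C}\X},\qquad
\Psi\colon \mathcal{D}'\longmapsto {^{\bot}\mathcal{D}'}\cap{_\mathcal{C}\X},
\]
so that $\Phi$ is the bijection of Theorem \ref{4.12} (from finite $\mathcal{C}$-tilting subcategories to the relative coresolving, strongly covariantly finite subcategories $\mathcal{D}'\subseteq{_\mathcal{C}\X}$ satisfying $\check{\mathcal{D}'}=\check{{_\mathcal{C}\X}}$) and $\Psi$ is the bijection of Proposition \ref{4.15} (from relative coresolving, strongly covariantly finite subcategories of ${_\mathcal{C}\X}$ onto the relative resolving, strongly contravariantly finite ones).

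Next I would use Proposition \ref{4.17} to match the two side conditions: under $\Psi$, the condition $\check{\mathcal{D}'}=\check{{_\mathcal{C}\X}}$ on the coresolving side corresponds exactly to the condition $\Psi(\mathcal{D}')\subseteq\hat{\mathcal{C}}$ on the resolving side. Thus the composition $\Psi\circ\Phi$ restricts to a bijection from finite $\mathcal{C}$-tilting subcategories onto the class of relative resolving, strongly contravariantly finite subcategories of ${_\mathcal{C}\X}$ that are contained in $\hat{\mathcal{C}}$, which is precisely the target of the stated correspondence.

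It remains to verify that $(\Psi\circ\Phi)(\mathcal{T})=\check{\mathcal{T}}\cap{_\mathcal{C}\X}$, i.e.\ that ${^{\bot}({_\mathcal{T}\X})}\cap{_\mathcal{C}\X}=\check{\mathcal{T}}\cap{_\mathcal{C}\X}$. For the inclusion $\supseteq$, since ${_\mathcal{T}\X}\subseteq\mathcal{T}^{\bot}$ and $\mathcal{T}$ is self-orthogonal, Lemma \ref{ort}(1) gives $\check{\mathcal{T}}\bot\mathcal{T}^{\bot}$, hence $\check{\mathcal{T}}\subseteq{^{\bot}({_\mathcal{T}\X})}$. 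For $\subseteq$, Proposition \ref{4.3}(1) yields ${_\mathcal{C}\X}\subseteq\check{{_\mathcal{T}\X}}$, so any $X\in{^{\bot}({_\mathcal{T}\X})}\cap{_\mathcal{C}\X}$ lies in ${^{\bot}({_\mathcal{T}\X})}\cap\check{{_\mathcal{T}\X}}$, which equals $\check{\mathcal{T}}$ by Proposition \ref{3.4}(1) applied with $\omega=\mathcal{T}$. Combining these two inclusions finishes the identification.

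I expect no single hard step: the main burden is bookkeeping, namely confirming that the two ``side conditions'' $\check{\mathcal{D}'}=\check{_\mathcal{C}\X}$ and $\mathcal{D}\subseteq\hat{\mathcal{C}}$ match correctly under $\Psi$, which is exactly the content of Proposition \ref{4.17} (and tacitly uses the hypothesis that $\check{{_\mathcal{C}\X}}$ is finitely filtered and ${_\mathcal{C}\X}$ has a relative injective cogenerator). The mildly technical point is the last identification ${^{\bot}({_\mathcal{T}\X})}\cap{_\mathcal{C}\X}=\check{\mathcal{T}}\cap{_\mathcal{C}\X}$, which is the place where Propositions \ref{4.3} and \ref{3.4} are both needed; everything else is formal composition of the earlier bijections.
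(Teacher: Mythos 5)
Your proposal is correct and follows essentially the same route as the paper: composing the bijections of Theorem \ref{4.12} and Proposition \ref{4.15}, matching the side conditions via Proposition \ref{4.17}, and then identifying the composite with $\mathcal{T}\mapsto\check{\mathcal{T}}\cap{_\mathcal{C}\X}$ by proving ${^{\bot}({_\mathcal{T}\X})}\cap{_\mathcal{C}\X}=\check{\mathcal{T}}\cap{_\mathcal{C}\X}$ exactly as the paper does, using Propositions \ref{4.3} and \ref{3.4}. No gaps.
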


\begin{proof}
By Theorem \ref{4.12}, Propositions \ref{4.15} and \ref{4.17}, it suffices to show
$$
{^\bot({_\mathcal{T}\X})\cap{_\mathcal{C}\X}}=\check{\mathcal{T}}\cap{_\mathcal{C}\X}.
$$
Clearly, $\check{\mathcal{T}}\cap{_\mathcal{C}\X}\subseteq{^\bot({_\mathcal{T}\X})\cap{_\mathcal{C}\X}}$. On the other hand, by Proposition \ref{4.3} and Corollary \ref{3.4}, ${^\bot({_\mathcal{T}\X})\cap{_\mathcal{C}\X}}\subseteq{^\bot({_\mathcal{T}\X})\cap\check{{_\mathcal{C}\X}}}=
{^\bot({_\mathcal{T}\X})\cap{_\mathcal{T}\check{\X}}}$
which shows that $
{^\bot({_\mathcal{T}\X})\cap{_\mathcal{C}\X}}\subseteq\check{\mathcal{T}}\cap{_\mathcal{C}\X}.
$
\end{proof}

\begin{example}
\begin{itemize}
  \item[(1)] If $\C=\mbox{mod}A$, where $A$ is an Artin algebra and $\mbox{mod}A$ is the category of finitely generated left $A$-modules. We can get the Auslander-Reiten correspondence of $n$-tilting pairs in \cite[Theorems 3.9 and 3.15]{wx2} from Theorems \ref{4.12} and \ref{fal}.
  \item[(2)] Using Theorems \ref{4.12} and \ref{fal} to Example \ref{exampair}(5), we get the Auslander-Reiten correspondence of an $n$-tilting object in \cite{ZZhuang}.
\end{itemize}
\end{example}

\section*{Acknowledgment}

This work was supported by the NSF of China (Grant Nos. 11671221, 11971225, 11901341), the project ZR2019QA015 supported by Shandong Provincial Natural Science Foundation,  the project funded by China Postdoctoral Science
Foundation (No. 2020M682141),
and the Young Talents Invitation Program of Shandong Province.
Part of this work was done by the first author during a visit at Tsinghua University.
He would like to express his gratitude to  Department of Mathematical Sciences and especially
to Professor Bin Zhu for the warm hospitality and the excellent working conditions. He also thanks Panyue Zhou for his helpful suggestions.

\vspace{4mm}
{\footnotesize
\hspace{-1.6em}{\bf Tiwei Zhao}\\
School of Mathematical Sciences, Qufu Normal University,
273165 Qufu,  P. R. China.\\
E-mail: \verb"tiweizhao@qfnu.edu.cn"\\[0.1cm]
{\bf Bin Zhu}\\
Department of Mathematical Sciences, Tsinghua University,
100084 Beijing, P. R. China.\\
E-mail: \verb"zhu-b@mail.tsinghua.edu.cn"\\[0.1cm]
{\bf Xiao Zhuang}\\
Department of Mathematical Sciences, Tsinghua University,
100084 Beijing, P. R. China.\\
E-mail: \verb"zhuangx16@tsinghua.org.cn"}
\end{document}